\documentclass{amsart}
\usepackage{amssymb,latexsym,amsmath,amsthm,amssymb}
\usepackage{enumitem}
\usepackage{nicefrac}
\usepackage{graphicx}
\newtheorem{theorem}{Theorem}[section]
\newtheorem{propo}[theorem]{Proposition}

\newtheorem{lemma}[theorem]{Lemma}
\theoremstyle{definition}
\newtheorem{remark}[theorem]{Remark}
\newtheorem{example}[theorem]{Example}
\newcommand{\lleft}{( \! (}
\newcommand{\rright}{) \! )}
\newcommand{\mik}{\preceq}
\newcommand{\al}{\alpha}
\newcommand{\lk}{\mbox{link}}

\newcommand{\abs}{\mbox{Abs}\,}
\newcommand{\dsp}{\displaystyle}
\newcommand{\sm}{\smallsetminus}

\newcommand{\os}{\caption{}}

\begin{document}
\title{The absolute order on the hyperoctahedral group}

\author{Myrto~Kallipoliti}

\address{Department of Mathematics
(Division of Algebra-Geometry)\\
University of Athens\\
Panepistimioupolis\\
15784 Athens, Greece}
\email{mirtok@math.uoa.gr}

\date{\today}

%
\begin{abstract}
The absolute order on the hyperoctahedral group $B_n$ is investigated.
It is proved that the order ideal of this poset generated by the Coxeter  elements  is homotopy Cohen-Macaulay and the 
M\"obius number of this ideal is computed. Moreover, it is
shown that every closed interval in the absolute order on $B_n$ is shellable and an example of a
non-Cohen-Macaulay interval in the absolute order on $D_4$ is given. Finally, the closed intervals
in the absolute order on $B_n$ and $D_n$ which are lattices are characterized and some of their
important enumerative invariants are computed.
\end{abstract}

\thanks{The present research will be part of the author's Doctoral Dissertation at the University of Athens}

\maketitle
\section{Introduction and results}
\label{intro}
Coxeter groups are fundamental combinatorial structures which appear in several areas of mathematics.
Partial orders on Coxeter groups often provide an important tool for understanding the questions of interest.
Examples of such partial orders are the Bruhat order and the weak order.
We refer the reader to \cite{bjo0, bb, Hu} for background  on Coxeter groups and their orderings.

In this work we study the absolute order. Let $W$ be a finite Coxeter group	 and let $\mathcal{T}$
be the set of \emph{all} reflections in $W$. The absolute order on $W$ is denoted by $\abs(W)$ and
defined as the partial order on $W$ whose Hasse diagram is obtained from the Cayley graph of $W$
with respect to $\mathcal{T}$ by directing its edges away from the identity (see Section \ref{abs}
for a precise definition). The poset $\abs(W)$ is locally self-dual and graded. It has a minimum
element, the identity $e\in W$, but will typically not have a maximum, since every Coxeter element
of $W$ is a maximal element of $\abs(W)$. Its rank function is called the absolute length and is
denoted by $\ell_\mathcal{T}$. The absolute length and order arise naturally in combinatorics
\cite{arm}, group theory \cite{Be, brwtt}, statistics \cite{Di} and invariant theory \cite{Hu}. For
instance, $\ell_{\mathcal{T}}(w)$ can also be defined as the codimension of the fixed space of $w$,
when $W$ acts faithfully as a group generated by orthogonal reflections on a vector space $V$ by
its standard geometric representation. Moreover, the rank generating polynomial of $\abs(W)$
satisfies
\[\sum_{w \in W}\ t^{\ell_{\mathcal{T}} (w)} \ = \ \prod_{i=1}^\ell\ (1 + e_i t), \]
where $e_1, e_2,\dots,e_\ell$ are the exponents \cite[Section 3.20]{Hu}
of $W$ and $\ell$ is its rank.
We refer to \cite[Section 2.4]{arm} and \cite[Section 1]{ca} for further discussion of
the importance of the absolute order and related historical remarks.

We will be interested in the combinatorics and topology of $\abs(W)$. These have been studied
extensively for the interval $[e,c]:=NC(W,c)$ of $\abs(W)$, known as the poset of noncrossing
partitions associated to $W$, where $c\in W$ denotes a Coxeter element. For instance, it was shown
in \cite{cbw} that $NC(W,c)$ is shellable for every finite Coxeter group $W$. In particular,
$NC(W,c)$ is Cohen-Macaulay over $\mathbb{Z}$ and the order complex of $NC(W,c)\sm\{e,c\}$ has the
homotopy type of a wedge of spheres. 

The problem to study the topology of the poset
$\abs(W)\sm\{e\}$ and to decide whether $\abs(W)$ is Cohen-Macaulay, or even shellable, was posed by
Reiner \cite[Problem 3.1] {arm2} and Athanasiadis (unpublished); see also \cite[Problem
3.3.7]{mwa}. Computer calculations carried out by Reiner showed that the absolute order is not
Cohen-Macaulay for the group $D_4$. This led Reiner to ask \cite[Problem 3.1] {arm2} whether the order ideal of $\abs(W)$ 
generated by the set of Coxeter elements is Cohen-Macaulay (or shellable) for every finite Coxeter group $W$. 
In the case of the symmetric group $S_n$ this ideal coincides with $\abs(S_n)$, since every maximal element of $S_n$ is a Coxeter element. 
Although it is not known whether $\abs(S_n)$ is  shellable, the following results were obtained in \cite{ca}.

\begin{theorem}\emph{(\cite[Theorem 1.1]{ca}).}
\label{thca1} The poset $\abs(S_n)$ is homotopy Cohen-Macaulay for every $n \ge 1$. In particular, the order
complex of $\abs(S_n)\sm\{e\}$ is homotopy equivalent to a wedge of $(n-2)$-dimensional spheres and
Cohen-Macaulay over $\mathbb{Z}$.
\end{theorem}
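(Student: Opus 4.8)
The plan is to prove the stronger statement that the bounded poset $\widehat{P}:=\abs(S_n)\cup\{\hat 1\}$, obtained by adjoining an artificial maximum, is homotopy Cohen--Macaulay; since the top open interval of $\widehat P$ is exactly $\abs(S_n)\sm\{e\}$, this yields both the homotopy type and the Cohen--Macaulayness over $\mathbb Z$ claimed in the theorem (homotopy Cohen--Macaulayness implies Cohen--Macaulayness over any ring, and a $(d-1)$-connected complex of dimension $d$ is homotopy equivalent to a wedge of $d$-spheres). First I would record that $\abs(S_n)$ is graded and pure of rank $n-1$: its rank function is $\ell_{\mathcal T}(w)=n-(\text{number of cycles of }w)$ and its maximal elements are precisely the $n$-cycles, i.e.\ the Coxeter elements. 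By the standard recursive characterization of homotopy Cohen--Macaulay posets it then suffices to show that every open interval of $\widehat P$ is homotopy Cohen--Macaulay of the appropriate dimension and that $\Delta(\abs(S_n)\sm\{e\})$ is $(n-3)$-connected.

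The open intervals of $\widehat P$ fall into two classes. For a genuine interval $(u,v)$ with $v\neq\hat 1$, left translation $x\mapsto u^{-1}x$ is a poset isomorphism $[u,v]\cong[e,u^{-1}v]$, and $[e,w]$ factors as a direct product $\nc_{\lambda_1}\times\cdots\times\nc_{\lambda_r}$ of noncrossing partition lattices indexed by the cycles of $w$. Since $\nc_m=NC(S_m,c)$ is shellable by the result quoted above, and since a product of shellable posets is shellable, all such intervals are shellable and hence Cohen--Macaulay. This disposes of every interval except those of the form $(x,\hat 1)$, which are the \emph{upper truncations} $P_{>x}=\{v\in\abs(S_n):v>x\}$, together with the case $x=e$ giving the full proper part. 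These are the heart of the matter, and I would treat them by induction, the relevant parameter being the number of cycles $c(x)=n-\ell_{\mathcal T}(x)$.

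To analyze an upper truncation I would cover it by the maximal intervals it contains:
\[ P_{>x} \ = \ \bigcup_{c}\,\bigl([x,c]\sm\{x\}\bigr), \]
where $c$ ranges over the Coxeter elements with $c>x$. Each piece $[x,c]\sm\{x\}$ has the maximum $c$, so its order complex is a cone and is contractible; thus one is in a position to run a nerve-type argument. The obstacle is that the pieces do not intersect nicely: $[x,c]\cap[x,c']$ is the set of common lower bounds of $c$ and $c'$ above $x$, and because $\abs(S_n)$ is not a lattice this set typically has several maximal elements and is \emph{not} contractible (already for $n=3$ the two $3$-cycles have all three transpositions as common lower bounds). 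I would therefore replace the plain nerve lemma by a constructibility argument: order the relevant Coxeter elements $c_1,c_2,\dots$ and glue the intervals one at a time, proving that at each stage the intersection of the new interval $[x,c_j]$ with the union of the previous ones is Cohen--Macaulay of one lower dimension. Here the product structure of the intervals and the induction hypothesis on the truncations of smaller absolute orders enter, since these intersections are again built from noncrossing partition lattices and from upper truncations involving fewer cycles.

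The main difficulty, and the step I expect to absorb most of the work, is exactly the construction of this linear order on the Coxeter elements and the verification that every intermediate intersection is Cohen--Macaulay of the correct dimension; this is the combinatorial core and is where the non-lattice behavior of $\abs(S_n)$ must be controlled. Once the constructible decomposition is in place, $\Delta(\abs(S_n)\sm\{e\})$ is homotopy Cohen--Macaulay and in particular $(n-3)$-connected, hence homotopy equivalent to a wedge of $(n-2)$-spheres; the number of spheres is then read off from the reduced Euler characteristic, equivalently from the M\"obius number $\mu_{\widehat P}(\hat 0,\hat 1)$, which can be evaluated separately.
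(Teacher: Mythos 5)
Your reduction to open intervals of $\widehat P$ is sound, and the bounded intervals $(u,v)$ with $v\neq\hat 1$ are indeed handled correctly by translation to $[e,u^{-1}v]$ and the factorization into noncrossing partition lattices. But the proof stops exactly where the theorem lives: you cover $P_{>x}$ by the intervals $[x,c]$ over Coxeter elements $c\succeq x$ and then assert that there is a linear order $c_1,c_2,\dots$ on these $(n-1)!$-many $n$-cycles for which each intersection $[x,c_j]\cap\bigl(\bigcup_{i<j}[x,c_i]\bigr)$ is Cohen--Macaulay of dimension one lower. You yourself flag this as ``the step I expect to absorb most of the work,'' and nothing in the proposal supplies it. This is not a routine verification that can be left implicit: the pairwise intersections $[x,c]\cap[x,c']$ are the sets of common lower bounds of two $n$-cycles, which (as you note for $n=3$) already fail to be contractible, and no candidate ordering or description of the partial unions is given. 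The inductive setup is also shaky: for $x\neq e$ the truncation $P_{>x}$ is a union of intervals $[x,c]\cong[e,x^{-1}c]$ over a specific, $x$-dependent family of elements $x^{-1}c$ of absolute length $n-1-\ell_{\mathcal T}(x)$, and this union is not visibly an upper truncation of a smaller absolute order, so the claimed induction on the number of cycles of $x$ has no established base or step. As written, the argument establishes the easy half (shellability of the closed intervals) and leaves the hard half as a conjecture.

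For contrast, the paper avoids ordering the Coxeter elements altogether. It applies Quillen's fiber theorem (Theorem \ref{zup}) to the rank-preserving surjection $f_n:\abs(S_n)\to\abs(S_{n-1})\times\{\hat 0,\hat 1\}$ induced by deleting $n$ from the cycle decomposition, and reduces everything (via Lemma \ref{f-1}) to showing that each fiber ideal $M(u)=\langle v:\pi_n(v)=u\rangle$ is homotopy Cohen--Macaulay. That ideal is a union of only $l$ pieces $C(u_i)\cdot\langle u_1\cdots\hat u_i\cdots u_l\rangle$, one for each cycle $u_i$ of $u$, and any two or more of these pieces intersect in the single ideal $\langle u\rangle$; the only nontrivial gluing is pushed into Lemma \ref{l4} of the Appendix, where the strong-constructibility bookkeeping is actually carried out. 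If you want to salvage your covering-by-maximal-intervals strategy, you would in effect be reconstructing the strong constructibility proof of \cite{ca}, and you would need to exhibit the decomposition and verify the intersections explicitly rather than positing that a suitable order exists.
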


\begin{theorem}\emph{(\cite[Theorem 1.2]{ca}).}
\label{thca2}
Let $\bar{P}_n=\abs(S_n)\sm\{e\}$. The reduced Euler characteristic of the order complex $\Delta
(\bar{P}_n)$ satisfies
\[\sum_{n \ge 1} \ (-1)^n \, \tilde{\chi} (\Delta (\bar{P}_n)) \,
\frac{t^n}{n!} \ = \ 1 - C(t) \exp \left\{ -2t \, C(t) \right\},\]
where $C(t) = \frac{1}{2t} \, (1 - \sqrt{1-4t})$ is the ordinary generating function for the
Catalan numbers.
\end{theorem}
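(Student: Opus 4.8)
=== PROOF PROPOSAL ===

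\medskip

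The statement to prove is the enumerative formula for the reduced Euler characteristics of the order complexes $\Delta(\bar{P}_n)$, where $\bar{P}_n = \abs(S_n) \sm \{e\}$. The plan is to translate the topological quantity $\tilde{\chi}(\Delta(\bar{P}_n))$ into a purely combinatorial sum and then package that sum into an exponential generating function, exploiting the product structure that Coxeter elements impose on the absolute order of the symmetric group.

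\medskip

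First I would recall that $\tilde{\chi}(\Delta(\bar{P}_n))$ equals, up to the usual sign convention, the M\"obius number $\mu$ of the poset $\abs(S_n)$ computed between the bottom element $e$ and an adjoined top element $\hat{1}$; that is, $\tilde{\chi}(\Delta(\bar{P}_n)) = \mu(\hat{0}, \hat{1})$ in $\abs(S_n) \cup \{\hat{1}\}$. This is the standard Philip Hall relation between the M\"obius function and the reduced Euler characteristic of the order complex of an open interval. Thus the topological content of Theorem \ref{thca1} is not strictly needed for the enumeration; what is needed is a recursive handle on this M\"obius number.

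\medskip

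The key structural input is that the absolute length on $S_n$ is $\ell_{\mathcal{T}}(w) = n - (\text{number of cycles of } w)$, and that the interval $[e,w]$ in $\abs(S_n)$ factors as a product of noncrossing partition lattices, one for each cycle of $w$. Concretely, a maximal element of $\abs(S_n)$ is an $n$-cycle, and $[e,c] = NC(S_n,c)$ is the classical noncrossing partition lattice $NC_n$, whose M\"obius number is the signed Catalan number. I would compute $\mu(\hat{0},\hat{1})$ by summing $-\mu(e,w)$ over all $w \in \bar{P}_n$, organizing the sum according to the cycle type of $w$. Since $\mu(e,w)$ factors over the cycles of $w$ as a product of M\"obius numbers of smaller noncrossing partition lattices (i.e.\ Catalan numbers), the whole sum becomes a sum over set partitions of $\{1,2,\dots,n\}$ weighted by a product of per-block Catalan contributions together with the count of cyclic permutations on each block. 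This is exactly the combinatorial shape that the exponential formula converts into a generating-function identity.

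\medskip

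Finally I would assemble the exponential generating function. The exponential formula says that if $\sum_{n} a_n t^n/n!$ records the per-block weight, then $\exp(\sum_{n\ge 1} a_n t^n/n!)$ records the summed-over-all-partitions weight. Computing the single-block weight amounts to evaluating $\sum_{k\ge 1}(k-1)!\,(\text{Catalan contribution on a } k\text{-block})\, t^k/k!$, which I expect to collapse to something expressible through $C(t)$, the Catalan generating function appearing in the statement; the factor $\exp\{-2t\,C(t)\}$ and the leading $1 - C(t)$ should emerge from reconciling the $(-1)^n$ sign, the adjoined top element (which accounts for the ``$1-$''), and the reindexing between ordinary and exponential generating functions. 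The main obstacle, and where the real work lies, is precisely this last reconciliation: carefully tracking the signs coming from the M\"obius function and the reduced Euler characteristic, correctly incorporating the contribution of the single maximal block versus the multi-block terms, and verifying that the per-block ordinary generating function built from Catalan numbers repackages into the closed form $C(t)\exp\{-2t\,C(t)\}$. I would verify the identity against small cases $n=1,2,3$ to pin down the sign conventions before committing to the general manipulation.
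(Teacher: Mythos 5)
Your proposal is correct and follows essentially the same route as the paper: this statement is only cited here from \cite{ca}, but the identical method (Philip Hall's $\tilde{\chi}=\mu(\hat{0},\hat{1})$, the cycle-wise factorization $\mu(e,w)=\prod(-1)^{k_i-1}C_{k_i-1}$ over noncrossing partition lattices, and the exponential formula) is exactly what the paper carries out in its proof of the $B_n$ analogue, Theorem \ref{th3b}, where the closed form $\exp\sum_{n\ge1}\beta_n t^n/n=\frac{\sqrt{1+4t}-1}{2t}\exp(\sqrt{1+4t}-1)$ is quoted from \cite[Section 5]{ca} and the substitution $t\mapsto -t$ produces the factor $C(t)\exp\{-2tC(t)\}$. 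The only piece you leave open is precisely that last generating-function evaluation, which is a routine Catalan computation and not a conceptual gap.
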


In the present paper we focus on the hyperoctahedral group $B_n$. 
We denote by $\mathcal{J}_n$ the order ideal of $\abs(B_n)$ generated by the Coxeter elements of $B_n$ 
and by $\bar{\mathcal{J}}_n$ its proper part $\mathcal{J}_n\sm \{e\}$. Contrary to the case of the symmetric
group, not every maximal element of $\abs(B_n)$ is a Coxeter element.
Our main results are as follows.

\begin{theorem}
\label{th3}
The poset $\mathcal{J}_n$ is homotopy Cohen-Macaulay for every $n\geq 2$. 
\end{theorem}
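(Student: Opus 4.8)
The plan is to mimic the strategy used for the symmetric group in \cite{ca}, adapting it to the combinatorial model for $\abs(B_n)$. First I would recall the standard description of reflections in $B_n$ as signed transpositions and the resulting factorization/absolute-length combinatorics: each element $w$ has a cycle-type decomposition into ``paired'' cycles and ``balanced'' (negative) cycles, and $\ell_{\mathcal{T}}(w)$ can be read off from these. With this dictionary one characterizes precisely which maximal elements lie in $\mathcal{J}_n$, namely those below some Coxeter element, and translates membership in $\mathcal{J}_n$ into a transparent condition on the signed-cycle structure of $w$.

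\smallskip

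The topological input I would use is the theory of homotopy Cohen--Macaulay posets together with the recursive/fibered tools available for such arguments: one shows a bounded poset is homotopy CM by exhibiting it as built up from homotopy-CM pieces whose intersections are again homotopy-CM, typically via a \emph{recursive atom ordering} or, more flexibly, via the Quillen-type fiber and nerve lemmas. Concretely, I would decompose $\bar{\mathcal{J}}_n$ according to the atoms (the reflections) or according to a chosen coordinate (for instance the behaviour of the element $n$, or whether a negative cycle is present), writing the proper part as a union of subposets indexed by this data. Each subposet should be recognizable, after peeling off the chosen coordinate, as an interval $NC(W',c')$ of noncrossing-partition type or as a lower-dimensional $\mathcal{J}_{n'}$, both of which are homotopy CM by \cite{cbw} and by induction on $n$.

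\smallskip

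I would then run an induction on $n$, with the base case $n=2$ checked directly. The inductive step combines the pieces: after verifying that each piece in the decomposition is homotopy CM of the correct dimension and that the intersections of pieces are homotopy CM and appropriately connected, I would invoke a gluing lemma (a generalized Mayer--Vietoris / nerve argument for posets, or the criterion that a poset covered by homotopy-CM subposets whose pairwise intersections are homotopy-CM of one lower dimension is itself homotopy CM). This yields that $\Delta(\bar{\mathcal{J}}_n)$ is homotopy equivalent to a wedge of spheres of the top dimension, hence $\mathcal{J}_n$ is homotopy Cohen--Macaulay.

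\smallskip

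The hard part will be controlling the presence of the balanced (negative) cycles, which is exactly what makes $B_n$ genuinely different from $S_n$: a Coxeter element of $B_n$ is an $n$-cycle of balanced type, so the condition ``below some Coxeter element'' does not reduce cleanly to a product of noncrossing-partition lattices the way it does in type $A$. Getting the decomposition of $\bar{\mathcal{J}}_n$ into homotopy-CM pieces to respect this negative-cycle constraint, and checking that the pairwise intersections have the right connectivity and dimension so the gluing lemma applies, is where the real work lies; I expect the bookkeeping for elements carrying a negative cycle versus those that do not to require the most care.
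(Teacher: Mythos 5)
Your outline has the right skeleton and it does match the paper's route in broad strokes: the paper also inducts on $n$ via the letter $n$, using the deletion map $\pi_n$ to build a surjective rank-preserving poset map $f_n:\mathcal{J}_n\to\mathcal{J}_{n-1}\times\{\hat{0},\hat{1}\}$ and then invoking Quillen's fiber theorem, with a gluing lemma (union of homotopy-CM ideals with homotopy-CM intersections of rank $n$ or $n-1$) doing the local work. But there are two genuine gaps at exactly the points you wave at. First, your description of the pieces is wrong: the fiber over $(u,\hat{1})$ is the order ideal $M(u)$ generated by \emph{all} elements of $\mathcal{J}_n$ obtained from $u$ by inserting $\pm n$ into some cycle (or appending the balanced $1$-cycle $[n]$ when $u$ has no balanced cycle), and this is neither an interval $NC(W',c')$ nor a smaller $\mathcal{J}_{n'}$. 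Showing $M(u)$ is homotopy Cohen--Macaulay is the technical core of the proof: one decomposes $M(u)$ as $\bigcup_i C(u_i)\cdot\langle u_1\cdots\hat{u}_i\cdots u_l\rangle$ over the cycles $u_i$ of $u$, where $C(u_i)$ is the ideal generated by the cycles covering $u_i$ that project back to $u_i$, and proving that $C(u_i)$ is homotopy CM requires a separate strong-constructibility argument (the paper's Appendix, Lemmas \ref{l1'}--\ref{l4'}), with the balanced-cycle case handled by its own case analysis. None of this is supplied, or even correctly anticipated, by "recognizable as $NC(W',c')$ or a lower-dimensional $\mathcal{J}_{n'}$."

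Second, to apply Quillen's theorem you need the fibers over principal order ideals, $f_n^{-1}(\langle q\rangle)$, to be homotopy CM, and the identification $f_n^{-1}(\langle q\rangle)=\langle f_n^{-1}(q)\rangle$ is not formal: it rests on a lifting lemma (if $\pi_n(w)\mik u$ then some $v$ covering $u$ with $\pi_n(v)=u$ satisfies $w\mik v$), whose type-$B$ case again requires care with balanced cycles. You would also need shellability of arbitrary closed intervals of $\abs(B_n)$ (the paper's Theorem \ref{th2}, proved via an explicit EL-labeling) to handle the fibers over $(u,\hat{0})$ and the intersections $\langle u\rangle$ in the gluing step. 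Finally, your fallback option of decomposing $\mathcal{J}_n$ directly over its maximal elements (the Coxeter elements) or over atoms does not work as stated: the gluing lemma demands that the intersection of \emph{any} subfamily of the covering ideals have rank at least $n-1$, and intersections of several intervals $[e,c]$ for distinct Coxeter elements $c$ drop in rank well below that.
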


\begin{theorem}
\label{th3b}
The reduced Euler
characteristic of the order complex $\Delta(\bar{\mathcal{J}}_n)$ satisfies

\[\sum_{n \geq 2}  (-1)^n \tilde{\chi} (\Delta (\bar{\mathcal{J}}_n))
\frac{t^n}{n!}	= 1 - \sqrt{C(2t)} \exp \left\{ -2t	 C(2t) \right\}\left(1+\sum_{n \geq 1} 2^{n-1}{2n-1\choose n} \frac{t^n}{n}\right),\]
where $C(t) = \frac{1}{2t} \, (1 - \sqrt{1-4t})$ is the ordinary generating function for the
Catalan numbers.
\end{theorem}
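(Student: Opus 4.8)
The plan is to compute the reduced Euler characteristic by exploiting the homotopy Cohen-Macaulay property established in Theorem \ref{th3}, which guarantees that $\Delta(\bar{\mathcal{J}}_n)$ has the homotopy type of a wedge of spheres in the top dimension, so that $(-1)^n \tilde{\chi}(\Delta(\bar{\mathcal{J}}_n))$ equals (up to sign) the M\"obius number $\mu(\mathcal{J}_n)$ of the poset $\mathcal{J}_n$ with a maximum adjoined. Thus the real task is to evaluate this M\"obius number and package the resulting sequence into an exponential generating function. The guiding model is Theorem \ref{thca2}: the symmetric-group answer is built from the Catalan generating function $C(t)$, and since reflections in $B_n$ come in two flavors (the diagonal/sign-changing reflections and the transposition-type reflections acting on pairs of coordinates), I expect the $B_n$ answer to arise from $C(2t)$ together with an extra correction factor accounting for the blocks that carry a sign change.

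First I would set up the combinatorial model for $\mathcal{J}_n$ in terms of the $B_n$-noncrossing-partition language: elements of $\abs(B_n)$ below a Coxeter element are described by certain $B_n$-partitions of $\{\pm 1,\dots,\pm n\}$, with exactly one (possibly empty) "zero block" that is invariant under the sign involution and carries the net sign change. Because $\mathcal{J}_n$ is the ideal generated by all Coxeter elements, I would stratify its elements according to whether they lie below some Coxeter element, and I expect the maximal elements to split into those that are genuine Coxeter elements and those that are not — the latter being exactly the feature distinguishing $B_n$ from $S_n$. The key structural step is a M\"obius-number recursion obtained by conditioning on the block structure: peeling off the factors coming from the "paired" blocks (each contributing a Catalan-type factor, whence $C(2t)$ after accounting for the two signs per coordinate) and separately handling the distinguished zero block, whose contribution I expect to be the source of the extra series $1+\sum_{n\geq 1}2^{n-1}\binom{2n-1}{n}\frac{t^n}{n}$.

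The generating-function manipulation is then largely formal: I would translate the block-decomposition recursion into a product/composition identity of exponential generating functions via the exponential formula, recognizing $\sqrt{C(2t)}\exp\{-2t\,C(2t)\}$ as the transform of the paired-block data (structurally parallel to the factor $C(t)\exp\{-2tC(t)\}$ in Theorem \ref{thca2}, with $t\mapsto 2t$ and a square root reflecting the sign symmetry), and identifying the remaining bracketed factor as the contribution of the zero block. Establishing the closed form $\sum_{n\geq1}2^{n-1}\binom{2n-1}{n}\frac{t^n}{n}$ for that factor would proceed by summing the M\"obius contributions of the zero block over its possible sizes and invoking a standard Lagrange-inversion or known Catalan-convolution identity.

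The main obstacle I anticipate is the precise bookkeeping of the zero block: unlike the symmetric-group case, where every maximal element is a Coxeter element and the recursion closes cleanly, here I must carefully separate the M\"obius contributions of non-Coxeter maximal elements and verify that the distinguished sign-carrying block contributes exactly the factor $1+\sum_{n\geq1}2^{n-1}\binom{2n-1}{n}\frac{t^n}{n}$ and not some nearby but incorrect series. Getting the combinatorial weights on this block right — in particular the factor $2^{n-1}$ and the division by $n$ — and confirming that the paired-block generating function is genuinely $\sqrt{C(2t)}\exp\{-2t\,C(2t)\}$ rather than $C(2t)\exp\{-2t\,C(2t)\}$ is where the delicate counting lies; everything downstream is routine generating-function algebra once these two factors are pinned down.
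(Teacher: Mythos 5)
Your proposal follows essentially the same route as the paper: reduce $\tilde{\chi}(\Delta(\bar{\mathcal{J}}_n))$ to the M\"obius number $-\sum_{x\in\mathcal{J}_n}\mu(\hat{0},x)$, factor $\mu(\hat{0},x)$ over the cycle decomposition via Remark \ref{anal} using Reiner's values $(-1)^m\binom{2m-1}{m}$ for a balanced $m$-cycle and $(-1)^{m-1}C_{m-1}$ for a paired $m$-cycle, and apply the exponential formula so that the paired cycles yield $\sqrt{C(2t)}\exp\{-2tC(2t)\}$ (the square root arising exactly as you predict, from the weight $2^{n-1}$ rather than $2^n$ per block) while the at-most-one balanced cycle contributes the factor $1+\sum_{n\geq 1}2^{n-1}\binom{2n-1}{n}\frac{t^n}{n}$. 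The only cosmetic difference is that the identity $\tilde{\chi}=\mu$ is used purely combinatorially (Proposition 3.8.6 of \cite{St}) without appeal to Cohen--Macaulayness, and the closed form for the paired-cycle series is quoted from \cite[Section 5]{ca} rather than rederived by Lagrange inversion.
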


The maximal (with respect to inclusion) intervals in $\abs(B_n)$ include the posets $NC^B(n)$ of
noncrossing partitions of type $B$, introduced and studied by Reiner  \cite{R}, and 
$NC^B(p,q)$ of annular noncrossing partitions, studied recently by Krattenthaler \cite{krat} and by Nica and Oancea \cite{N0}. 
We have the following result concerning the 
intervals of $\abs(B_n)$. 

\begin{theorem}
\label{th2}
Every interval of $\abs(B_n)$ is shellable.
\end{theorem}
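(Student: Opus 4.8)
The plan is to prove that every interval $[u,w]$ in $\abs(B_n)$ is shellable by establishing that it is \emph{EL-shellable}, i.e.\ by constructing an edge-labeling of its Hasse diagram that admits a unique increasing maximal chain, with lexicographically least value, on every closed subinterval. The natural strategy is to reduce the problem to intervals of the noncrossing-partition type. Recall that a general interval $[u,w]$ in $\abs(B_n)$ is isomorphic to the interval $[e,u^{-1}w]$ whenever $u\mik w$ in the absolute order; more precisely, left-multiplication by $u^{-1}$ carries $[u,w]$ onto $[e,u^{-1}w]$, so it suffices to treat intervals of the form $[e,w]$. Thus I would first reduce, via this translation and the local self-duality of $\abs(B_n)$, to understanding the lower intervals $[e,w]$ for arbitrary $w\in B_n$.

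The key step is then a structural decomposition of each such interval $[e,w]$ as a direct product of smaller, well-understood pieces. The signed-permutation $w$ decomposes according to its action on $\{\pm 1,\dots,\pm n\}$ into orbits, which are of two kinds: \emph{paired} cycles carrying no negative sign (behaving like cycles of a symmetric group) and \emph{balanced} or \emph{negative} cycles that genuinely involve a sign change. I would show that $[e,w]$ factors as a product $[e,w_1]\times\cdots\times[e,w_k]$, one factor per orbit, where each factor is isomorphic either to a noncrossing partition lattice $NC(S_m)$ of type $A$ (for the paired cycles) or to a type-$B$ noncrossing partition poset $NC^B(m)$, respectively an annular noncrossing partition poset $NC^B(p,q)$ of Krattenthaler and Nica--Oancea (for the sign-changing orbits). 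This is exactly where the posets $NC^B(n)$ and $NC^B(p,q)$ mentioned just above the statement enter as the maximal intervals.

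Given this decomposition, shellability follows from two standard facts: first, that each factor is shellable, and second, that a direct product of shellable (indeed EL-shellable) posets is shellable. For the type-$A$ factors I would invoke that $NC(S_m)$ is EL-shellable, which is part of the general result of \cite{cbw} that $NC(W,c)$ is shellable for every finite Coxeter group $W$; this also handles the type-$B$ noncrossing factor $NC^B(m)=NC(B_m,c)$. The genuinely new input is the shellability of the annular factors $NC^B(p,q)$: here I would construct an explicit EL-labeling directly, exploiting the combinatorial model of annular noncrossing partitions, or alternatively verify that these posets are lexicographically shellable by building a recursive atom ordering. Finally I would combine the pieces using the product lemma for EL-shellable posets to conclude that $[e,w]$, and hence every interval $[u,w]$, is shellable.

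The main obstacle I anticipate is the shellability of the annular pieces $NC^B(p,q)$. Unlike the type-$A$ and type-$B$ noncrossing lattices, these annular posets are genuinely new objects not covered by \cite{cbw}, they need not be lattices, and their combinatorial structure is subtler because the two boundary circles of the annulus interact. Producing a valid EL-labeling there—or a recursive atom ordering that is compatible with the product decomposition—will require a careful case analysis of how the atoms (reflections) of $B_n$ sit below a balanced cycle, and verifying the unique-increasing-chain condition on every subinterval is where the bulk of the technical work will lie.
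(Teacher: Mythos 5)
There is a genuine gap, and it sits exactly where you locate your ``main obstacle.'' Your reduction to $[e,w]$ via left multiplication is fine (it is Lemma \ref{lemarm}), but the product decomposition you propose --- one factor per orbit of $w$ --- is incorrect for the balanced cycles. The covering relations of types (c) and (e) listed in Section \ref{relationbn} show that a single paired cycle whose support meets two different balanced cycles of $w$ lies below $w$: for instance $\lleft a_j,a_m\rright$ lies below $[a_1,\dots,a_j][a_{j+1},\dots,a_m]$ but below neither balanced factor separately. Consequently $[e,w_1w_2]\not\cong[e,w_1]\times[e,w_2]$ when $w_1,w_2$ are balanced; the correct statement (Remark \ref{anal}) is that only the paired cycles split off, and the \emph{entire} balanced part $b$ of $w$ must be kept as a single factor $[e,b]$. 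When $b$ has two balanced cycles this factor is the annular poset $NC^B(p,q)$, but when it has three or more balanced cycles it is a ``multi-annular'' object that appears nowhere in your case list, so even granting everything you assert, products of three or more balanced cycles are not covered. On top of this, the one genuinely new ingredient your plan requires --- an EL-labeling of $NC^B(p,q)$ (and of its multi-cycle generalizations), which are not lattices and are not covered by \cite{cbw} --- is precisely what you defer without supplying, so the proof is not complete even in the two-cycle case.

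The paper avoids the decomposition entirely: it defines a single labeling $\lambda$ on all covering relations of $\abs(B_n)$ by reading off the reflection $t=a^{-1}b$ (label $i$ if $t=[i]$, label $j$ if $t=\lleft i,\pm j\rright$ with $i<j$), and proves directly that its restriction to any $[e,w]$ is an EL-labeling by exhibiting the unique increasing maximal chain: one sorts the relevant absolute values occurring in the cycles of $w$ into an increasing sequence and builds the chain by inserting them one at a time, checking at each step that the prescribed cover carries the strictly smallest available label. This uniform argument handles every interval at once, including those below products of arbitrarily many balanced cycles, which is the case your approach cannot reach.
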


Furthermore, we consider the absolute order on the group $D_n$ and give an example of a maximal
element $x$ of $\abs(D_4)$ for which the interval $[e,x]$ is not Cohen-Macaulay over any field
(Remark \ref{exd4}). This is in accordance with Reiner's computations, showing that $\abs(D_4)$ is
not Cohen-Macaulay and answers in the negative a question raised by Athanasiadis (personal
communication), asking whether all intervals in the absolute order on  Coxeter groups are shellable. 
Moreover, it shows that $\abs(D_n)$ is not Cohen-Macaulay over any field for every $n\geq 4$. 
It is an open problem to decide whether $\abs(B_n)$ is Cohen-Macaulay for every $n\geq 2$ and whether 
the order ideal of $\abs(W)$ generated by the set of
Coxeter elements is Cohen-Macaulay for every Coxeter group $W$ \cite[Problem 3.1]{arm2}.

This paper is organized as follows.
In Section \ref{prepre} we fix notation and terminology related
to partially ordered sets and simplicial complexes and discuss the absolute order on
the classical finite reflection groups. 
In Section \ref{elel} we prove Theorem \ref{th2} by showing that every closed interval of
$\abs(B_n)$ admits an EL-labeling. Theorems \ref{th3} and \ref{th3b} are proved in Section \ref{cmcm}. Our method to establish
homotopy Cohen-Macaulayness is different from that of \cite{ca}. It is based on a poset fiber
theorem due to Quillen \cite[Corollary 9.7]{Q}. 
The same method gives an alternative proof of Theorem \ref{thca1}, 
which is also included in Section \ref{cmcm}. In Section \ref{latlat} we characterize the closed intervals in $\abs(B_n)$ and $\abs(D_n)$ 
which are lattices. In Section \ref{spespe} we study a special case of
such an interval, namely the maximal interval $[e,x]$ of $\abs(B_n)$, where $x=t_1t_2\cdots t_n$
and each $t_i$ is a balanced reflection. Finally, in Section \ref{telostelos}
we compute the zeta polynomial, cardinality and M\"obius function of the intervals of $\abs(B_n)$
which are lattices. These computations are based on results of Goulden, Nica and Oancea \cite{N}
concerning enumerative properties of the poset $NC^B(n-1,1)$.


\section{Preliminaries}
\label{prepre}
\subsection{Partial orders and simplicial complexes}
Let $(P,\leq)$ be a finite partially ordered set (poset for short) and $x,y\in P$. We say that $y$
\emph{covers} $x$, and write $x\to y$, if $x<y$ and there is
no $z\in P$ such that $x< z< y$. The poset $P$ is called \emph{bounded} if there exist elements
$\hat{0}$ and $\hat{1}$ such that $\hat{0}\leq x\leq \hat{1}$ for every $x\in P$. 
The elements of $P$ which cover $\hat{0}$ are called \emph{atoms}. 
A subset $C$ of a poset $P$
is called a \emph{chain} if any two elements of $C$ are comparable in $P$. The length of a (finite)
chain $C$ is equal to
$|C|-1$. We say that $P$ is \emph{graded} if all maximal chains of $P$ have the same length.  
In that case, the common length of all maximal chains of $P$ is called \emph{rank}. Moreover, assuming $P$ has a $\hat{0}$ element, 
there exists a unique function $\rho:P\to \mathbb{N}$, called the \emph{rank function} of $P$, such that
\[\rho(y)=\left\{
\begin{array}{ll}
0 & \mbox{if $y=\hat{0}$}, \\
\rho(x)+1 & \mbox{if $x\to y$}.
\end{array}
\right.\] 
We say that $x$ has \emph{rank} $i$ if
$\rho(x)=i$. 
For $x\leq y$ in $P$ we denote by $[x,y]$ the closed interval $\{z \in P : x \leq z \leq y\}$ of $P$, endowed with the partial order induced from $P$. 
If $S$ is a subset of $P$, then the \emph{order ideal} of $P$ generated by
$S$ is the subposet $\langle S\rangle$ of $P$ consisting of all $x\in P$ for which $x\mik y$ holds for some $y\in S$.
We will write $\langle y_1,y_2,\dots,y_m \rangle$ for the order ideal of $P$ generated by the set
$\{y_1,y_2,\dots,y_m\}$.
Given two posets $(P,\leq_P)$ and $(Q,\leq_Q)$, a map $f:P\to Q$ is called a
\emph{poset map} if it is order preserving, i.e. $x\leq_Py$ implies $f(x)\leq_Qf(y)$ for all
$x,y\in P$. If, in addition, $f$ is a bijection with order preserving inverse, then $f$ is said to be a 
\emph{poset isomorphism}.  
The posets $P$
and $Q$ are said to be \emph{isomorphic}, and we write $P\cong Q$, if there
exists a  poset isomorphism $f: P \to Q$. Assuming that $P$ and $Q$ are graded, 
the map $f:P\to Q$ is called \emph{rank-preserving} if for every $x\in P$, 
the rank of $f(x)$ in $Q$ is equal to the rank of $x$ in $P$.  
The \emph{direct product} of
$P$ and $Q$ is the poset $P\times Q$ on the set $\{(x,y):x\in P,\, \ y\in Q\}$
for which $(x,y)\leq (x',y')$ holds in $P\times Q$ if $x\leq_P x'$ and $y\leq_Q y'$.
The \emph{dual} of $P$ is the poset $P^*$ defined on the same ground
set as $P$ by letting $x\leq y$ in $P^*$ if and only if $y\leq x$ in $P$. The poset $P$ is called \emph{self-dual} if $P$ and $P^*$ are isomorphic 
and \emph{locally self-dual} if every closed interval of $P$ is self-dual. 
For more information on partially ordered sets we refer the reader to \cite[Chapter 3]{St}.

We recall the notion of EL-shellability, defined by Bj\"orner \cite{bjo1}.
Assume that $P$ is bounded and graded and let
$C(P)=\{(a,b)\in P\times P:\ a\to b\}$ be the set of covering relations of $P$.
An \emph{edge-labeling} of $P$ is a map $\lambda:C(P)\to \Lambda$, where $\Lambda$ is some poset.
Let $[x,y]$ be a closed interval of $P$ of rank $n$.  
To each maximal chain $c:\,x\to x_1\to\cdots\to x_{n-1}\to y$ of $[x, y]$ we associate the sequence
$\lambda(c)=(\lambda(x,x_1),\lambda(x_1,x_2),\dots, \lambda(x_{n-1},y)\,)$. 
We say that $c$ is \emph{strictly increasing}
if the sequence $\lambda(c)$ is strictly increasing in the order of $\Lambda$.
The maximal chains of $[x, y]$ can be totally ordered by using the lexicographic order on the
corresponding sequences. An \emph{edge-lexicographic labeling (EL-
labeling)} of $P$ is an edge labeling such that in each closed interval $[x, y]$
of $P$ there is a unique strictly increasing maximal chain and this chain lexicographically precedes
all other maximal chains of $[x, y]$. The poset $P$ is called \emph{EL-shellable} if it admits an EL-labeling. 
A finite poset $P$ of rank $d$ with a minimum element is called \emph{strongly constructible} \cite{ca}
if it is bounded and pure shellable, or it can be
written as a union $P = I_1 \cup I_2$ of two strongly
constructible proper ideals $I_1, I_2$ of rank $n$, such that $I_1
\cap I_2$ is strongly constructible of rank at least $n-1$. 


Let $V$ be a nonempty finite set.
An \emph{abstract simplicial complex} $\Delta$ on the vertex set $V$ is a collection
of subsets of $V$ such that $\{v\} \in\Delta$ for every $v\in V$ and such that $G \in \Delta$ and $F\subseteq G$ imply $F\in\Delta$.
The elements of $V$ and $\Delta$ are called \emph{vertices} and \emph{faces} of $\Delta$, respectively.
The maximal faces are called \emph{facets}.
The dimension of a face $F\in \Delta$ is equal to $|F|-1$ and is denoted by $\dim F$.
The \emph{dimension} of $\Delta$ is defined as the maximum dimension of a face of $\Delta$ and is denoted by $\dim\Delta$.
If all facets of $\Delta$ have the same dimension, then $\Delta$ is said to be \emph{pure}.
The \emph{link} of a face $F$ of a simplicial complex
$\Delta$ is defined as $\lk_{\Delta}(F)=\{G\smallsetminus F:\,G\in\Delta,F\subseteq G\}$.
All topological properties of an abstract simplicial complex $\Delta$ we mention will refer to those of its geometric
realization $\|\Delta\|$. The complex $\Delta$ is said to be
\emph{homotopy Cohen-Macaulay} if for all $F\in\Delta$ the link of $F$ is topologically $(\dim
\lk_\Delta$$(F)-1)$-connected. 
For a facet $G$ of a simplicial complex $\Delta$, we denote by $\bar{G}$ the Boolean interval $[\varnothing,G]$.
A pure $d$-dimensional simplicial complex $\Delta$ is 
\emph{shellable} if there exists a total ordering $G_1, G_2,\dots,G_m$ of
the set of facets of $\Delta$ such that for all $1 < i \le m$, the
intersection of $\bar{G}_1\cup \bar{G}_2 \cup \, \cdots \, \cup \bar{G}_{i-1}$ with $\bar{G}_i$ is pure
of dimension $d-1$. 
For a $d$-dimensional simplicial complex we have the following implications:
pure shellable $\Rightarrow$  homotopy Cohen-Macaulay $\Rightarrow$
homotopy equivalent to a wedge of $d$-dimensional spheres. For background concerning the topology of simplicial complexes we refer to
\cite{bjo2} and \cite{mwa}.

To every poset $P$ we associate an abstract simplicial complex $\Delta(P)$, called the \emph{order
complex} of $P$. The vertices of $\Delta(P)$ are the elements of $P$ and its faces are the chains
of $P$. If $P$ is graded of rank $n$, then $\Delta(P)$ is pure of dimension $n$. All topological
properties of a poset $P$ we mention will refer to those of the geometric realization of
$\Delta(P)$. We say that a poset $P$ is \emph{shellable} if its order complex $\Delta(P)$ is
shellable and recall that every EL-shellable poset is shellable \cite[Theorem 2.3]{bjo1}.
We also recall the following lemmas.

\begin{lemma}
\label{strcon}
Let $P$ and $Q$ be finite posets, each with a minimum element. 
\begin{enumerate}[leftmargin=*, itemsep=5pt]
\item[\emph{(i)}] \emph{\cite[Lemma 3.7]{ca}} 
If $P$ and $Q$ are strongly constructible, then so is
	the direct product $P\times Q$.
\item [\emph{(ii)}] \emph{\cite[Lemma 3.8]{ca}} If $P$ is the union
	of strongly constructible ideals $I_1, I_2,\dots,I_k$  of $P$ of rank $n$ and the
	intersection of any two or more of these ideals is strongly constructible of rank $n$ or
	$n-1$, then $P$ is also strongly constructible.
\end{enumerate}
\end{lemma}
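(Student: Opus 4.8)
The plan is to establish both parts by an induction that unwinds the two-clause recursive definition of strong constructibility. Part~(i) follows from the distributivity of the direct product over unions of ideals, while part~(ii) is an induction on the number $k$ of ideals which, at each step, peels off one ideal and recognises the result as an instance of the basic two-fold union in the definition. Throughout I use that a strongly constructible poset of rank $d$ is graded of rank $d$, and that the ranks of $P$ and $Q$ add in the direct product $P\times Q$.

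For part~(i) I would induct on $|P|+|Q|$. In the base case both $P$ and $Q$ are bounded and pure shellable; then $P\times Q$ is bounded, with minimum $(\hat 0_P,\hat 0_Q)$ and maximum $(\hat 1_P,\hat 1_Q)$, graded of rank $\mathrm{rank}(P)+\mathrm{rank}(Q)$, and pure shellable by the standard fact that a direct product of bounded, pure shellable posets is again bounded and pure shellable. Hence $P\times Q$ is strongly constructible. For the inductive step we may assume, say, that $Q$ is not in the base case, so that $Q=J_1\cup J_2$ with $J_1,J_2$ strongly constructible proper ideals of rank $\mathrm{rank}(Q)$ and $J_1\cap J_2$ strongly constructible of rank at least $\mathrm{rank}(Q)-1$. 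Distributing the product over this union gives
\[ P\times Q \ = \ (P\times J_1)\ \cup\ (P\times J_2), \]
where each $P\times J_i$ is a proper ideal of $P\times Q$ (since $J_i$ is a proper ideal of $Q$), is strongly constructible by the inductive hypothesis (as $|P|+|J_i|<|P|+|Q|$), and has rank $\mathrm{rank}(P)+\mathrm{rank}(Q)=\mathrm{rank}(P\times Q)$. Moreover $(P\times J_1)\cap(P\times J_2)=P\times(J_1\cap J_2)$ is strongly constructible by induction and has rank at least $\mathrm{rank}(P\times Q)-1$. Thus $P\times Q$ satisfies the defining recursion and is strongly constructible.

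For part~(ii) I would induct on $k$. The cases $k=1$ and $k=2$ are, respectively, the hypothesis itself and the basic two-fold clause of the definition. For the inductive step write $P=P'\cup I_k$ with $P'=I_1\cup\cdots\cup I_{k-1}$. The ideals $I_1,\dots,I_{k-1}$ and all their mutual intersections satisfy the hypotheses of part~(ii) with $k-1$ in place of $k$, so by induction $P'$ is strongly constructible of rank $n$; and $I_k$ is strongly constructible of rank $n$ by assumption. It therefore remains to show that
\[ P'\cap I_k \ = \ \bigcup_{j=1}^{k-1}\,(I_j\cap I_k) \]
is strongly constructible of rank at least $n-1$. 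Each piece $I_j\cap I_k$ is an intersection of two of the original ideals, hence strongly constructible of rank $n$ or $n-1$, and every intersection of several of these pieces, being an intersection of three or more of the original ideals, is again strongly constructible of rank $n$ or $n-1$; in particular $P'\cap I_k$ has rank at least $n-1$. Granting that $P'\cap I_k$ is strongly constructible, $P=P'\cup I_k$ meets the defining recursion and is strongly constructible.

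The main obstacle is precisely the strong constructibility of the intersection $P'\cap I_k$. It is again a union of strongly constructible ideals with well-behaved mutual intersections, but now the pieces $I_j\cap I_k$ may have \emph{mixed} ranks $n$ and $n-1$, so part~(ii) cannot be re-applied verbatim, and one cannot split off a rank-$(n-1)$ piece as a full-rank factor of the two-fold recursion. Indeed a careless ``mixed-rank union'' statement is false: gluing a rank-$2$ diamond to a rank-$1$ claw along a common atom produces a poset that is neither bounded nor expressible as a union of two proper rank-$2$ ideals, hence not strongly constructible. The resolution is to strengthen the inductive statement so that it applies to $P'\cap I_k$ directly --- for instance, by allowing the ideals in part~(ii) to have rank $n$ or $n-1$ while keeping the intersection hypotheses, and by choosing a good ordering of the ideals so that each partial union $(I_1\cup\cdots\cup I_{m-1})\cap I_m$ is strongly constructible of rank $n-1$, in analogy with the standard peeling argument for constructible simplicial complexes. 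Carrying out this bookkeeping, so that the induction on $k$ closes while respecting the full-rank requirement of the defining recursion, is the delicate heart of the proof.
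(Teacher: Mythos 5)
First, note that the paper does not prove this lemma at all: both parts are quoted directly from \cite[Lemmas 3.7 and 3.8]{ca}, so the only meaningful comparison is with the proofs given there. Your argument for part (i) is correct and complete, and is essentially the argument of \cite[Lemma 3.7]{ca}: induct on $|P|+|Q|$, use in the base case that a direct product of bounded pure shellable posets is bounded and pure shellable, and in the inductive step distribute the product over the defining union $Q=J_1\cup J_2$, checking that ranks add, that $P\times J_i$ is a proper ideal of full rank, and that $P\times(J_1\cap J_2)$ is strongly constructible of rank at least $\mathrm{rank}(P\times Q)-1$. Nothing is missing there.

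Part (ii), however, is not a proof. You set up the induction on $k$ correctly ($P=P'\cup I_k$ with $P'=I_1\cup\cdots\cup I_{k-1}$ strongly constructible of rank $n$ by induction), you correctly reduce to showing that $P'\cap I_k=\bigcup_{j<k}(I_j\cap I_k)$ is strongly constructible of rank $n$ or $n-1$, and you correctly diagnose why this does not follow from restating (ii): the pieces $I_j\cap I_k$ may have mixed ranks $n$ and $n-1$, and a union of strongly constructible ideals of mixed ranks need not even be graded, let alone strongly constructible. But having identified this as ``the delicate heart of the proof,'' you stop there: the proposed fixes (strengthening the inductive statement to allow ideals of rank $n$ or $n-1$, or reordering the ideals) are only named, not carried out, and it is not clear either can be carried out as stated --- in the strengthened statement one must still explain why the union is graded, which is exactly the original difficulty. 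So the crucial step of part (ii) is a genuine gap. For the record, the two extreme cases are manageable: if every $I_j\cap I_k$ has rank $n$, the induction hypothesis applies directly (intersections of two or more of them are intersections of three or more of the original ideals); if every $I_j\cap I_k$ has rank $n-1$, then each such pairwise-or-more intersection, being contained in an ideal of rank $n-1$, has rank exactly $n-1$, and the induction hypothesis applies with $n-1$ in place of $n$. It is the mixed case that your proposal leaves open, and that is precisely where the content of \cite[Lemma 3.8]{ca} lies; a complete write-up must either rule that case out or handle it explicitly.
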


\begin{lemma}
Every strongly constructible poset is homotopy Cohen-Macaulay.
\end{lemma}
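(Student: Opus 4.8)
The plan is to prove that every strongly constructible poset is homotopy Cohen-Macaulay by induction on the cardinality of the poset, following the recursive structure in the definition of strong constructibility. The statement we must establish is that $\Delta(P)$ is homotopy Cohen-Macaulay whenever $P$ is strongly constructible. Since the definition of strongly constructible is itself recursive — a poset is either bounded and pure shellable, or a union $P = I_1 \cup I_2$ of two strongly constructible ideals with the stated rank conditions — the natural approach is structural induction mirroring exactly these two cases.

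First I would dispose of the base case. If $P$ is bounded and pure shellable, then its order complex $\Delta(P)$ is pure shellable, and by the chain of implications recalled in the excerpt (pure shellable $\Rightarrow$ homotopy Cohen-Macaulay), we are immediately done. The substance of the argument lies in the inductive step, where $P = I_1 \cup I_2$ with $I_1, I_2$ strongly constructible ideals of rank $n$ and $I_1 \cap I_2$ strongly constructible of rank at least $n-1$. By the inductive hypothesis, $\Delta(I_1)$, $\Delta(I_2)$, and $\Delta(I_1 \cap I_2)$ are all homotopy Cohen-Macaulay. I would then invoke a Mayer--Vietoris style gluing argument: since $I_1$ and $I_2$ are order ideals, $\Delta(I_1 \cup I_2) = \Delta(I_1) \cup \Delta(I_2)$ and $\Delta(I_1) \cap \Delta(I_2) = \Delta(I_1 \cap I_2)$ as simplicial complexes, so the standard gluing lemma for homotopy Cohen-Macaulay complexes applies, provided the dimension and connectivity bounds line up.

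The key technical point to verify is the connectivity bookkeeping. Homotopy Cohen-Macaulayness of a $d$-dimensional complex requires that the link of every face $F$ be $(\dim \lk(F) - 1)$-connected; to glue two such complexes along a common subcomplex one needs the intersection to be of codimension at most one and itself suitably connected, which is precisely guaranteed by the hypothesis that $I_1 \cap I_2$ has rank $n$ or $n-1$. I would handle links of faces $F$ by cases according to whether $F$ lies in $I_1 \setminus I_2$, in $I_2 \setminus I_1$, or in $I_1 \cap I_2$; in the first two cases the link is computed entirely within one piece and inherits Cohen-Macaulayness from the inductive hypothesis, while in the overlap case one must again glue links, and the rank condition ensures the relevant subcomplex has the right dimension so that connectivity is preserved under the union.

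The hard part will be making the gluing argument for homotopy Cohen-Macaulayness precise at the level of connectivity, since this is exactly where the combinatorial rank hypotheses (rank $n$ for the $I_j$ and rank $n$ or $n-1$ for the intersection) must be translated into topological connectivity estimates for the links. I expect this to reduce to a known gluing lemma for homotopy Cohen-Macaulay complexes, so the real work is verifying that the hypotheses of that lemma are met in every case of the face-link analysis, rather than proving the topology from scratch.
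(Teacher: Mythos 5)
Your proposal is correct and is essentially the argument behind the paper's proof: the paper simply cites \cite[Proposition 3.6]{ca} and \cite[Corollary 3.3]{ca}, whose content is exactly your structural induction (pure shellable base case, plus a gluing step for the union of two ideals). The gluing lemma you anticipate is available in this paper as Lemma \ref{tomes}(ii) (applied with $k=2$), and the rank hypotheses in the definition of strong constructibility match its hypotheses exactly, so no further connectivity bookkeeping is needed beyond what that lemma already encapsulates.
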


\begin{proof}
It follows from \cite[Proposition 3.6]{ca} and \cite[Corollary 3.3]{ca}.
\end{proof}

\begin{lemma}
\label{tomes}
Let $P$ and $Q$ be finite posets, each with a minimum element. 
\begin{enumerate}[leftmargin=*, itemsep=5pt]
\item[\emph{(i)}] If $P$ and $Q$ are homotopy Cohen-Macaulay, then so is
	the direct product $P\times Q$.
\item [\emph{(ii)}] If $P$ is the union
	of homotopy Cohen-Macaulay ideals $I_1, I_2,\dots,I_k$ of $P$ of rank $n$ and the
	intersection of any two or more of these ideals is homotopy Cohen-Macaulay of rank $n$ or
	$n-1$, then $P$ is also homotopy Cohen-Macaulay.
\end{enumerate}
\end{lemma}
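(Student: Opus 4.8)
The plan is to verify, for each of the two statements, the link characterization of homotopy Cohen--Macaulayness directly, reducing everything to connectivity estimates for order complexes of open intervals and upper sets. I will use the standard fact that, for a graded poset $R$ with minimum $\hat 0$ and a chain $F=(x_0<\cdots<x_k)$, the link $\lk_{\Delta(R)}(F)$ is the join of $\Delta(R_{<x_0})$, the $\Delta((x_i,x_{i+1}))$, and $\Delta(R_{>x_k})$, together with the join connectivity formula (the join of an $a$-connected and a $b$-connected complex is $(a+b+2)$-connected). Since $R_{<x_0}$ has a minimum, $\Delta(R_{<x_0})$ is contractible unless $x_0=\hat 0$, so this reduces homotopy Cohen--Macaulayness of $R$ of rank $d$ to the two conditions: (A) every open interval $(x,y)$ is $(\rho(y)-\rho(x)-3)$-connected, and (B) every upper set $R_{>x}$ is $(d-\rho(x)-2)$-connected. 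Note that (A) already implies that every closed interval of a homotopy Cohen--Macaulay poset is again homotopy Cohen--Macaulay.

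For part (i) I would exploit that closed intervals of $P\times Q$ are products of intervals, while upper sets $(P\times Q)_{>(a,b)}$ are of the form $(P_{\ge a}\times Q_{\ge b})\setminus\{(a,b)\}$. The two topological inputs are the homeomorphism $\|\Delta(P\times Q)\|\cong\|\Delta(P)\|\times\|\Delta(Q)\|$ and the cone identity $\mathrm{C}X\times\mathrm{C}Y\cong\mathrm{C}(X\ast Y)$. Removing the (cone-point) minimum from $P_{\ge a}\times Q_{\ge b}$ and applying the cone identity gives
\[ \Delta\bigl((P\times Q)_{>(a,b)}\bigr)\ \simeq\ \Delta(P_{>a})\ast\Delta(Q_{>b}), \]
so that condition (B) for $P$ and $Q$ feeds through the join formula to give exactly the connectivity required by (B) for $P\times Q$. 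For the interval condition (A) I would use the analogous formula $\Delta(\overline{I\times J})\simeq\Sigma\bigl(\Delta(\bar I)\ast\Delta(\bar J)\bigr)$ for bounded posets $I,J$ (the suspension accounting for the removal of both $\hat 0$ and $\hat 1$); since intervals of $P$ and $Q$ inherit homotopy Cohen--Macaulayness, this yields the bound in (A). The degenerate cases, where one interval or filter is a single point, reduce immediately to a single factor.

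For part (ii) I would first reduce to $k=2$ by induction, setting $J=I_1\cup\cdots\cup I_{k-1}$ and $P=J\cup I_k$; here $J\cap I_k=\bigcup_{i<k}(I_i\cap I_k)$ is again a union of homotopy Cohen--Macaulay ideals whose higher intersections $I_i\cap I_j\cap I_k$ are homotopy Cohen--Macaulay of rank $n$ or $n-1$, so the inductive hypothesis applies (at rank $n$ or $n-1$). For $k=2$, the key observation is that, since $I_1,I_2$ are order ideals, no chain of $P$ can meet both $I_1\setminus I_2$ and $I_2\setminus I_1$; hence $\Delta(P)=\Delta(I_1)\cup\Delta(I_2)$ with $\Delta(I_1)\cap\Delta(I_2)=\Delta(I_1\cap I_2)$, and the same decomposition holds for $\lk_{\Delta(P)}(F)$ for every chain $F$. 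Writing $\delta=\dim\lk_{\Delta(P)}(F)=n-|F|$, for $F\subseteq I_1\cap I_2$ the pieces $\lk_{\Delta(I_1)}(F)$ and $\lk_{\Delta(I_2)}(F)$ are $(\delta-1)$-connected, while their intersection $\lk_{\Delta(I_1\cap I_2)}(F)$ is $(\delta-1)$- or $(\delta-2)$-connected according as $\mathrm{rank}(I_1\cap I_2)$ is $n$ or $n-1$; the gluing lemma for connectivity (Mayer--Vietoris, van Kampen, and Hurewicz) then gives that $\lk_{\Delta(P)}(F)$ is $(\delta-1)$-connected in both cases. For $F$ contained in only one ideal the link coincides with its link there, and the conclusion is immediate.

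I expect the main obstacle to be the connectivity bookkeeping rather than any single deep step: one must pin down the exact degree shifts through the join, suspension, and gluing operations, and treat the boundary cases (empty or $(-1)$-dimensional links, trivial intervals, and possibly empty intersections $\lk_{\Delta(I_1\cap I_2)}(F)$) with care, checking in particular that the homotopy form of the gluing lemma is invoked only when the intersection is nonempty and sufficiently connected, so that van Kampen and Hurewicz apply. A secondary point requiring attention is organizing the induction in (ii) so that the hypotheses retain the same shape when the relevant rank drops from $n$ to $n-1$.
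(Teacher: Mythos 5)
Your proposal is essentially correct, but note that the paper does not actually prove this lemma: part (i) is quoted from \cite{bww2} (Corollary 3.8) and part (ii) is delegated to the proof of Lemma 3.4 of \cite{ca}, so what you have written is a self-contained reconstruction of the content of those citations. For part (ii) your route (order ideals force $\Delta(I_1\cup I_2)=\Delta(I_1)\cup\Delta(I_2)$ with $\Delta(I_1)\cap\Delta(I_2)=\Delta(I_1\cap I_2)$, then the gluing lemma applied link by link) is exactly the argument of the cited lemma, so there you coincide with the paper in all but presentation. For part (i) you argue directly through the homeomorphisms $\|\Delta(P\times Q)\|\cong\|\Delta(P)\|\times\|\Delta(Q)\|$ and $\mathrm{C}X\times \mathrm{C}Y\cong \mathrm{C}(X\ast Y)$, reducing to the two connectivity conditions on open intervals and principal filters; this is a correct and more elementary alternative to invoking \cite{bww2}, at the cost of the connectivity bookkeeping you describe (which does check out: the join formula, the suspension formula for $\overline{I\times J}$, and the degenerate cases all land on the right bounds).

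The one step that needs more care than you give it is the reduction of (ii) to $k=2$. After setting $J=I_1\cup\cdots\cup I_{k-1}$ you want to apply the inductive hypothesis to $J\cap I_k=\bigcup_{i<k}(I_i\cap I_k)$, but the ideals $I_i\cap I_k$ need not all have the same rank (some may have rank $n$, others $n-1$), so the hypotheses of the lemma need not be reproduced and the union need not even be pure. You flag this yourself as a point of organization; the clean fix is to run the induction on $k$ at the level of connectivity rather than of Cohen--Macaulayness: prove that a union of $m$-connected complexes, any two or more of which intersect in a nonempty $(m-1)$-connected complex, is $m$-connected (the inductive step only needs the intersection $\bigl(\bigcup_{i<k}A_i\bigr)\cap A_k=\bigcup_{i<k}(A_i\cap A_k)$ to be $(m-1)$-connected, which is a one-sided condition and so survives the drop in rank), and apply this to the cover $\lk_{\Delta(P)}(F)=\bigcup_i \lk_{\Delta(I_i)}(F)$. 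The same wrinkle is present in the source the paper cites, and is harmless in every application made in this paper, where all multi-fold intersections coincide with a single ideal $\langle u\rangle$.
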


\begin{proof} 
The first part follows from \cite[Corollary 3.8]{bww2}. The proof of
the second part is similar to that of \cite[Lemma 3.4]{ca}.
\end{proof}

\subsection{The absolute length and absolute order}
\label{abs}
Let $W$ be a finite Coxeter group and let $\mathcal{T}$ denote the set of all reflections in $W$.
Given $w \in W$, the \emph{absolute length} of $w$ is defined as  the smallest integer $k$ such that $w$ can be
written as a product of $k$ elements of $\mathcal{T}$; it is denoted by $\ell_{\mathcal{T}} (w)$. 
The \emph{absolute order} $\abs(W)$ is the partial order  $\mik$ on $W$ defined by
\[ u \mik v \ \ \ \mbox{if and only if} \ \ \ \ell_{\mathcal{T}}(u) \, + \,
\ell_{\mathcal{T}}(u^{-1} v) \, = \, \ell_{\mathcal{T}} (v) \]
for $u, v \in W$. Equivalently, $\preceq$ is the partial order on $W$ with covering relations $w
\to wt$, where $w \in W$ and $t \in\mathcal{T}$ are such that $\ell_{\mathcal{T}} (w) <
\ell_{\mathcal{T}} (wt)$. In that case we write $w\stackrel{t}{\to}wt$. The poset $\abs(W)$ is
graded with rank function $\ell_{\mathcal{T}}$. 

Every closed interval in $W$ is isomorphic to one which contains 
the identity. Specifically, we have the following lemma (see also \cite[Lemma 3.7]{cbw}). 

\begin{lemma}
\label{lemarm}
Let $u,v\in  W$ with $u\mik v$. 
The map $\phi: [u,v]  \to [e, u^{-1}v]$ defined by $\phi(w)= u^{-1}w$ is a poset isomorphism.
\end{lemma}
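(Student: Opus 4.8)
The plan is to exhibit an explicit inverse and verify that it, together with $\phi$, gives mutually inverse order-preserving maps. Let $\psi\colon [e,u^{-1}v]\to[u,v]$ be the candidate inverse $\psi(z)=uz$; as maps on all of $W$ the maps $\phi$ and $\psi$ are mutually inverse bijections, so once I know that each restricts to a map between the two intervals and that both are order preserving, $\phi$ will be a poset isomorphism. Throughout I will use two features of the absolute length: the additivity built into the definition of $\preceq$ (namely $a\preceq b$ if and only if $\ell_{\mathcal{T}}(a)+\ell_{\mathcal{T}}(a^{-1}b)=\ell_{\mathcal{T}}(b)$), and its subadditivity $\ell_{\mathcal{T}}(xy)\le\ell_{\mathcal{T}}(x)+\ell_{\mathcal{T}}(y)$, which is immediate from concatenating reduced expressions over $\mathcal{T}$.

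First I would treat $\phi$. For $w\in[u,v]$ the relation $e\preceq u^{-1}w$ holds automatically, while $u^{-1}w\preceq u^{-1}v$ reduces, via the identity $(u^{-1}w)^{-1}(u^{-1}v)=w^{-1}v$, to the equation $\ell_{\mathcal{T}}(u^{-1}w)+\ell_{\mathcal{T}}(w^{-1}v)=\ell_{\mathcal{T}}(u^{-1}v)$; this follows by adding the rank identities coming from $u\preceq w$, $w\preceq v$, and $u\preceq v$ and telescoping the lengths of $u$, $w$, $v$. The same bookkeeping applied to a pair $u\preceq w_1\preceq w_2\preceq v$, using $(u^{-1}w_1)^{-1}(u^{-1}w_2)=w_1^{-1}w_2$, shows $u^{-1}w_1\preceq u^{-1}w_2$, so $\phi$ is order preserving. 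Note that this entire direction uses additivity only.

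The main obstacle is the well-definedness of $\psi$, that is, showing that $z\preceq u^{-1}v$ forces $u\preceq uz\preceq v$; here additivity alone is insufficient, since it cannot by itself produce the key equality $\ell_{\mathcal{T}}(uz)=\ell_{\mathcal{T}}(u)+\ell_{\mathcal{T}}(z)$. I would establish this equality by a sandwich argument: subadditivity gives the inequality $\le$, while factoring $v=(uz)(z^{-1}u^{-1}v)$ and combining the resulting bound $\ell_{\mathcal{T}}(v)\le\ell_{\mathcal{T}}(uz)+\ell_{\mathcal{T}}(z^{-1}u^{-1}v)$ with the additivity identities from $u\preceq v$ and $z\preceq u^{-1}v$ yields the reverse inequality $\ge$. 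Once $\ell_{\mathcal{T}}(uz)=\ell_{\mathcal{T}}(u)+\ell_{\mathcal{T}}(z)$ is available, both $u\preceq uz$ and $uz\preceq v$ drop out by direct substitution, and the order preservation of $\psi$ follows from this same identity exactly as for $\phi$. Finally, since $\phi$ and $\psi$ are mutually inverse as maps of sets, the inclusions $\phi([u,v])\subseteq[e,u^{-1}v]$ and $\psi([e,u^{-1}v])\subseteq[u,v]$ together show that $\phi$ restricts to a bijection of the two intervals with order-preserving inverse $\psi$, which completes the proof.
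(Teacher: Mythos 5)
Your proof is correct and complete. It is worth noting that the paper does not actually write out an argument here: it disposes of the lemma in one line by citing Lemma 2.5.4 and Proposition 2.6.11 of Armstrong's memoir, where the statement is deduced from the characterization of $\preceq$ in terms of prefixes of reduced $\mathcal{T}$-words. Your route is more elementary and entirely self-contained, using only the additivity of $\ell_{\mathcal{T}}$ that is built into the definition of $\preceq$ together with the subadditivity $\ell_{\mathcal{T}}(xy)\le \ell_{\mathcal{T}}(x)+\ell_{\mathcal{T}}(y)$. You also correctly isolate the one genuinely nontrivial point: that $z\preceq u^{-1}v$ forces $\ell_{\mathcal{T}}(uz)=\ell_{\mathcal{T}}(u)+\ell_{\mathcal{T}}(z)$, so that $\psi$ really lands in $[u,v]$. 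Your sandwich argument for this equality --- subadditivity in one direction, and the factorization $v=(uz)(z^{-1}u^{-1}v)$ combined with the rank identities from $u\preceq v$ and $z\preceq u^{-1}v$ in the other --- is exactly the ``shifting'' phenomenon that Armstrong's cited lemma encodes via reduced $\mathcal{T}$-words; the direction concerning $\phi$ itself is, as you observe, pure telescoping of the additivity identities. What the paper's citation buys is brevity and alignment with the standard reference; what your argument buys is a proof readable without leaving the paper, and one that makes transparent that nothing beyond the two basic length inequalities is needed.
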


\begin{proof}
It follows from \cite[Lemma 2.5.4]{arm} by an argument similar to that in the proof of \cite[Proposition 2.6.11]{arm}. 
\end{proof}
For more information on the absolute order on $W$ we refer the reader to \cite[Section 2.4]{arm}.
 
\subsection*{The absolute order on $S_n$}
We view the group $S_n$ as the group of permutations of the set $\{1,2,\dots, n\}$. The set
$\mathcal{T}$ of reflections of $S_n$ is equal to the set of all transpositions $(i\,j)$, where
$1\leq i<j\leq n$. The length $\ell_{\mathcal{T}}(w)$ of $w\in S_n$ is equal to $n-\gamma(w)$,
where $\gamma(w)$ denotes the number of cycles in the cycle decomposition of $w$. Given a cycle $c =
(i_1\, i_2\, \cdots\, i_r)$ in $S_n$ and indices $1\leq j_1<j_2<\cdots<j_s\leq r$, we say that the
cycle $(i_{j_1}\,i_{j_2}\,\cdots\,i_{j_s})$ can be obtained from $c$ by deleting elements.
Given two disjoint cycles $a, b$ in $S_n$ each of which can be obtained from $c$ by deleting
elements, we say that $a$ and $b$ are noncrossing with respect to $c$ if there does not exist a
cycle $(i\, j\, k\, l)$ of length four which can be obtained from $c$ by deleting elements, such
that $i, k$ are elements of $a$ and $j, l$ are elements of $b$. For instance, if $n = 9$ and $c =
(3\, 5\, 1\, 9\, 2\, 6\, 4)$ then the cycles $(3\, 6\, 4)$ and $(5\, 9\, 2)$ are noncrossing with
respect to $c$ but $(3\, 2\, 4)$ and $(5\, 9\, 6)$ are not. It can be verified \cite[Section 2]{Br}
that for $u, v\in S_n$ we have $u \mik v$ if and only if 
\begin{itemize}
\item every cycle in the cycle decomposition for $u$ can be obtained from some cycle in the cycle
decomposition for $v$ by deleting elements and 
\item any two cycles of $u$ which can be obtained
from the same cycle $c$ of $v$ by deleting elements are noncrossing with respect to $c$.
\end{itemize} 
Clearly, the maximal elements of $\abs(S_n)$ are precisely the $n$-cycles, which are the Coxeter elements of $S_n$.  
Figure \ref{s3} illustrates the Hasse diagram of the poset $\abs(S_3)$.
\begin{figure}[h]
\begin{center}
\includegraphics[width=2.7in]{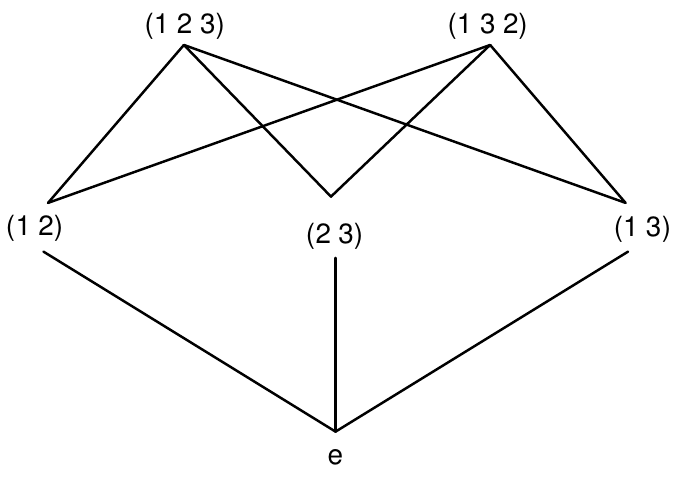}
\end{center}
\os
\label{s3}
\end{figure}

\subsection*{The absolute order on $B_n$}
\label{relationbn}
We view the hyperoctahedral group $B_n$ as the group of permutations
$w$ of the set $\{\pm1,\pm2,\dots,\pm n\}$ satisfying $w(-i)=-w(i)$
for $1\leq i\leq n$.
Following \cite{brwtt}, the permutation which has cycle form
$(a_1\,a_2\,\cdots\,a_k)(-a_1\,-a_2\,\cdots\,-a_k)$ is denoted by
$\lleft a_1,a_2,\dots, a_k\rright$ and is called a \emph{paired} $k$-cycle,
while the cycle $(a_1\,a_2\,\cdots\, a_k\,-a_1\,-a_2\,\cdots\,-a_k)$
is denoted by $[a_1,a_2,\dots,a_k]$ and is called a \emph{balanced} $k$-cycle.
Every element $w \in B_n$ can be written as a product of disjoint paired or balanced cycles, called cycles of $w$.
With this notation, the set $\mathcal{T}$ of reflections of $B_n$ is equal to the union
\begin{equation}
\label{T_B}
\{[i]:\,1\leq i\leq n\}\,\cup\,\{\lleft i,j\rright,\lleft i,-j\rright:\,1\leq i<j\leq n\}.
\end{equation}
The length $\ell_{\mathcal{T}}(w)$ of $w\in B_n$ is equal to $n-\gamma(w)$, where $\gamma(w)$
denotes the number of paired cycles in the cycle decomposition of $w$. An element $w\in B_n$ is
maximal in $\abs(B_n)$ if and only if it can be written as a product of disjoint balanced cycles
whose lengths sum to $n$. The Coxeter elements of $B_n$ are precisely the balanced $n$-cycles. 
The covering relations $w\stackrel{t}{\to}wt$ of $\abs(B_n)$, when $w$ and
$t$ are non-disjoint cycles, can be described as follows: For	$1\leq i<j\leq m\leq n$, we have:

\

\begin{enumerate}
\item[(a)]\label{a1}$\lleft a_1,\dots, a_{i-1},a_{i+1},\dots,a_m\rright\stackrel{\lleft a_{i-1},a_i\rright}{\longrightarrow}\lleft a_1,\dots,a_m\rright$

\item[(b)]\label{a2}$\lleft a_1,\dots,a_m\rright\stackrel{[a_i]}{\longrightarrow}[a_1,\dots,a_{i-1},a_i,-a_{i+1},\dots,-a_m]$

\item[(c)]\label{a3}$\lleft a_1,\dots,a_m\rright\stackrel{\lleft a_i,-a_j\rright}{\longrightarrow}[a_1,\dots,a_i,-a_{j+1},\dots,-a_m][a_{i+1},\dots,a_j]$

\item[(d)]\label{a4}$[a_1,\dots, a_{i-1},a_{i+1},\dots,a_m]\stackrel{\lleft a_{i-1},a_i\rright}{\longrightarrow}[a_1,\dots,a_m]$

\item[(e)]\label{a5}$[a_1,\dots, a_j]\lleft a_{j+1},\dots,a_m\rright\stackrel{\lleft a_j,a_m\rright}{\longrightarrow}[a_1,\dots,a_m]$

\item[(f)]\label{a6}$\lleft a_1,\dots, a_j\rright\lleft a_{j+1},\dots,a_m\rright\stackrel{\lleft a_j,a_m\rright}{\longrightarrow}\lleft a_1,\dots,a_m\rright$
\end{enumerate}

\

\noindent where $a_1,\dots,a_m$ are elements of  $\{\pm1,\dots,\pm n\}$ with pairwise distinct
absolute values. Figure \ref{b2} illustrates the Hasse diagram of the poset $\abs(B_2)$.

\begin{remark}
\label{anal}
Let $w=bp$ be an element in $B_n$, where $b$ (respectively,  $p$) is the product of all balanced  (respectively, paired) cycles of $w$.
The covering relations of $\abs(B_n)$ imply the poset isomorphism $[e,w]\cong [e,b]\times[e,p]$.
Moreover, if $p=p_1\cdots p_k$ is written as a product of disjoint paired cycles, then 
\[[e,w]\cong [e,b]\times[e,p_1]\times\cdots\times[e,p_k].\] 
\end{remark}

\begin{figure}[h]
\begin{center}
\includegraphics[width=2.7in]{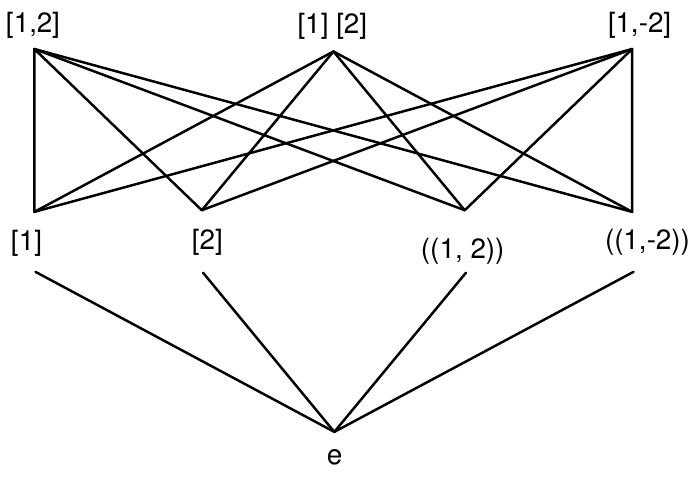}
\end{center}
\os
\label{b2}
\end{figure}

\subsection*{The absolute order on $D_n$}
\label{dn}
The Coxeter group $D_n$ is the subgroup of index two of the group $B_n$,
generated by the set of reflections
\begin{equation}
\label{T_D}
\{\lleft i,j\rright,\lleft i,-j\rright:1\leq i<j\leq n\}
\end{equation}
(these are all reflections in $D_n$). An element $w\in B_n$ belongs to $D_n$ if and only if $w$ has
an even number of balanced cycles in its cycle decomposition. The absolute length on $D_n$
is the restriction of the absolute length of $B_n$ on the set $D_n$ and hence $\abs(D_n)$ is a
subposet of $\abs(B_n)$. Every
Coxeter element of $D_n$ has the form $[a_1,a_2,\dots,a_{n-1}][a_n]$, where
$a_1,\dots,a_n$ are elements of $\{\pm1,\dots,\pm n\}$ with pairwise distinct absolute values.

\subsection*{Projections}
We recall that $\mathcal{J}_n$ denotes the order ideal of $\abs(B_n)$ generated by the Coxeter elements of $B_n$. 
Let $P_n$ be $\abs(S_n)$ or $\mathcal{J}_n$ for some  $n\geq 2$.  
For $i\in\{1,2,\dots,n\}$ we define a map $\pi_i:P_n\to P_n$ by letting $\pi_i(w)$ be the permutation obtained when 
 $\pm i$ is deleted from the cycle decomposition of $w$. For example, if $n=i=5$ and
$w=[1,-5,2]\lleft 3,-4\rright\in \mathcal{J}_5$, then $\pi_i(w)=[1,2]\lleft 3,-4\rright$. 

\begin{lemma}
\label{proj1}The following hold for the map $\pi_i:P_n\to P_n$.
\begin{enumerate}[leftmargin=*, itemsep=5pt]
\item [\emph{(i)}] $\pi_i(w)\mik w$ for every $w\in P_n$. 
\item [\emph{(ii)}] $\pi_i$ is a poset map.
\end{enumerate}
\end{lemma}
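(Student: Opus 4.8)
The plan is to handle the two parts separately, in each case reducing to the effect of $\pi_i$ on a single cycle and on the covering relations of $P_n$. The basic observation is that $\pm i$ (respectively $i$, when $P_n=\abs(S_n)$) lies in exactly one cycle of $w$, so that $\pi_i$ alters only that cycle and fixes all the others.

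For part (i) I would argue straight from the covering relations. If $\pm i$ is a fixed point of $w$ then $\pi_i(w)=w$ and there is nothing to prove. Otherwise $\pm i$ lies in a nontrivial cycle $c$, and deleting it exhibits $\pi_i(w)\to w$ as a single covering relation: an instance of relation (a) if $c$ is a paired cycle, of relation (d) if $c$ is a balanced cycle of length at least two, and of relation (b) with $m=1$ in the degenerate case where $c$ is a balanced $1$-cycle (so that $\pi_i$ replaces it by a fixed point). In each instance the reflection involved is supported on $c$ while the other cycles are spectators, and the absolute length drops by exactly one; hence $\pi_i(w)\preceq w$. For $P_n=\abs(S_n)$ the same conclusion is read off directly from the cycle description of $\preceq$. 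Finally, since $\mathcal{J}_n$ is an order ideal, $\pi_i(w)\preceq w\in\mathcal{J}_n$ forces $\pi_i(w)\in\mathcal{J}_n$; this is what makes $\pi_i$ a well-defined self-map of $P_n$, and may be recorded as a by-product of (i).

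For part (ii) we must show that $u\preceq v$ implies $\pi_i(u)\preceq\pi_i(v)$. When $P_n=\abs(S_n)$ this follows at once from the combinatorial description of $\preceq$ given above. Each cycle of $u$ lies inside a unique cycle of $v$, and after deleting $i$ each cycle of $\pi_i(u)$ is still obtained from the corresponding cycle of $\pi_i(v)$ by deleting elements; moreover, deleting a letter can only destroy a four-cycle that witnesses a crossing, never create one, so the noncrossing condition survives as well. Thus both defining clauses of $u\preceq v$ are inherited by $\pi_i(u)\preceq\pi_i(v)$.

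The substantive case is $P_n=\mathcal{J}_n$. Here I would reduce to covering relations: since $P_n$ is graded it is enough to prove $u\to v\Rightarrow\pi_i(u)\preceq\pi_i(v)$ and then compose along a saturated chain. Writing $t=u^{-1}v$, if $\pm i\notin\mathrm{supp}(t)$ the cover is untouched by the deletion and $\pi_i(u)\to\pi_i(v)$ holds via the same $t$. When $\pm i\in\mathrm{supp}(t)$ one inspects the covering types with $\pm i$ among the letters $a_1,\dots,a_m$. Two points make this tractable. First, relation (c) never occurs as a cover inside $\mathcal{J}_n$: every element of $\mathcal{J}_n$ lies below a balanced $n$-cycle and so carries at most one balanced cycle, whereas (c) manufactures two balanced cycles out of one paired cycle and thus leaves $\mathcal{J}_n$. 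Second, for each remaining type the deletion yields either $\pi_i(u)=\pi_i(v)$ --- exactly when $\pm i$ is the letter created by the move, such as $a_s$ in (a) and (d) --- or a fresh instance of the same family of relations now linking $\pi_i(u)$ to $\pi_i(v)$. I expect the genuine obstacle to be the pivot subcases of (b) and (e), where $\pm i$ is one of the letters driving the move and the cycle type changes or two cycles merge: there the relation certifying $\pi_i(u)\to\pi_i(v)$ is not the literal specialization of the original move but a neighbouring one (for example, the pivot case of (b) is resolved by pivoting at $a_{s-1}$ rather than $a_s$, and the degenerate cases of (e) by a relation-(b) move at the last position, producing an all-positive balanced cycle). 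Handling these finitely many configurations carefully --- including those in which a cycle collapses to a fixed point --- and then invoking transitivity completes the argument.
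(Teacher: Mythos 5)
Your proposal is correct and follows essentially the same route as the paper: part (i) by recognizing the deletion of $\pm i$ as a single covering relation (types (a), (d), and the degenerate balanced $1$-cycle case), and part (ii) by reducing to covering relations and checking that each type either collapses under $\pi_i$ or descends to a covering relation of the same kind. Your write-up is more detailed than the paper's (which simply appeals to ``the list of covering relations''), and the observation that type (c) cannot occur inside $\mathcal{J}_n$ is a nice explicit justification, but the argument is the same.
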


\begin{proof}
Let $w\in P_n$. If $w(i)=i$, then clearly $\pi_i(w)=w$. Suppose that $w(i)\neq i$. Then it follows from our description of $\abs(S_n)$ and from 
the covering relations of types (a) and (d) of $\abs(B_n)$, that $\pi_i(w)$ is covered by $w$.  Hence $\pi_i(w)\mik w$. 
This proves (i). To prove (ii), it suffices to show that for every covering relation $u\to v$ in $P_n$ we have either $\pi_i(u)=\pi_i(v)$ or $\pi_i(u)\to\pi_i(v)$. 
Again, this follows from our discussion of $\abs(S_n)$ and from our list of covering relations of $\abs(B_n)$.  
\end{proof}

\begin{lemma}
\label{antenatel}
Let $P_n$ stand for  either $\abs(S_n)$ for every $n\geq 1$, or $\mathcal{J}_n$ for every $n\geq 2$. 
Let also $w\in P_n$ and $u\in P_{n-1}$ be such that   $\pi_n(w)\mik u$. 
Then there exists an element $v\in P_n$ which covers $u$ and satisfies  $\pi_n(v)=u$ and $w\mik v$.
\end{lemma}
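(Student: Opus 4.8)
The plan is to construct $v$ by inserting $\pm n$ into $u$ at the position prescribed by the cycle of $w$ that contains $n$, reducing everything to a single ``parallel insertion'' principle. First I dispose of the case $w(n)=n$. Then $\pi_n(w)=w$, so $w\mik u$, and it suffices to produce any cover $v$ of $u$ in $P_n$ with $\pi_n(v)=u$, since then $w\mik u\mik v$. Such a $v$ is obtained by inserting $\pm n$ into $u$: in $\abs(S_n)$ one sets $v=u\,(j\,n)$ for some $j$ lying in a nontrivial cycle of $u$ (or any $j<n$ if $u=e$); in $\mathcal{J}_n$ one inserts $n$ into the unique balanced cycle of $u$ if there is one, and otherwise adjoins the reflection $[n]$. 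In each case the covering relations of Section \ref{prepre} give $u\to v$ and $\pi_n(v)=u$, and $v$ carries at most one balanced cycle, so $v\in\mathcal{J}_n$.

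Now suppose $w$ moves $n$. By Lemma \ref{proj1}(i), $\pi_n(w)$ is covered by $w$; reading this covering relation (via the description of $\abs(S_n)$ or the relations (a)--(f) of $\abs(B_n)$) identifies the cycle $K$ of $w$ containing $n$ together with the predecessor $a$ of $n$ in $K$, so that $a\to n\to s$ inside $K$ while $a\to s$ in $\pi_n(w)$. Since $\pi_n(w)\mik u$, the element $a$ lies in a unique cycle $c$ of $u$, and I would define $v$ to be the element obtained from $u$ by inserting $\pm n$ immediately after $a$ in $c$. The equality $\ell_{\mathcal{T}}(v)=\ell_{\mathcal{T}}(u)+1$ is immediate from $\ell_{\mathcal{T}}=n-\gamma$ (inserting $\pm n$ merges the fixed point $\lleft n\rright$ into $c$, lowering $\gamma$ by one), so $u\to v$; and $\pi_n(v)=u$ because deleting $\pm n$ undoes the insertion. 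The exceptional subcase $K=[n]$, which has no predecessor, is handled as in the previous paragraph: adjoin $[n]$ to $u$ when $u$ has no balanced cycle (and deduce $w\mik v$ directly from the factorization $[e,v]\cong[e,u]\times[e,[n]]$ of Remark \ref{anal}), and insert $n$ into the balanced cycle of $u$ otherwise.

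The heart of the matter is the verification $w\mik v$ and, simultaneously, $v\in\mathcal{J}_n$. Both follow from the principle that inserting the same new symbol $\pm n$ immediately after a common element $a$ preserves the relation $\mik$: applied to $\pi_n(w)\mik u$ it yields $w\mik v$, and applied to $u\mik c'$ — where $c'$ is a maximal element above $u$ of the appropriate type (an $(n-1)$-cycle, resp.\ a balanced $(n-1)$-cycle) and $c$ is obtained from $c'$ by the same insertion of $n$ after $a$ — it yields $v\mik c$, whence $v$ lies below a Coxeter element and $v\in\mathcal{J}_n$. For $\abs(S_n)$ this principle is transparent from the cycle-deletion description of $\mik$ recalled in Section \ref{prepre}: placing $\pm n$ right after $a$ makes $a,n$ consecutive in the relevant cycles of both sides, so all deletion-containments persist, and since $\pm n$ sits adjacent to $a$ it occupies, for the four-cycle crossing test, the same slot as $a$ and creates no new crossing.

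The main obstacle is to make this principle precise and to verify it in $B_n$, where $\mik$ must be read through the type-$B$ noncrossing model rather than naive deletion and where the balanced cycle must be tracked. Two configurations require care. First, the cycle $c$ of $u$ containing $a$ may have a different type from the cycle $K$ of $w$ containing $n$ — for instance a paired cycle of $\pi_n(w)$ may descend from the balanced cycle of $u$ — so that inserting $n$ after $a$ places it into a balanced cycle of $v$ while it sits in a paired cycle of $w$; one must check that the corresponding covering relation among (a)--(f) is available and that $w\mik v$ survives this change of type. Second, one must ensure throughout that $v$ never acquires a second balanced cycle, which is precisely what dictates the choice between adjoining $[n]$ and inserting $n$ into an existing balanced cycle. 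Dispatching these configurations by a finite case analysis on the types in (a)--(f) completes the proof; the symmetric-group case is the straightforward template, and the $B_n$ case is its type-$B$ elaboration.
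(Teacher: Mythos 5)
Your overall strategy---producing $v$ by inserting $\pm n$ into $u$ at a position dictated by the cycle of $w$ containing $n$---is the same as the paper's, and your treatment of the symmetric group case and of the easy $B_n$ cases (where $n$ has a predecessor $a$ in its cycle of $w$, or where $[n]$ is a cycle of $w$ and $u$ has no balanced cycle) is sound. The gap is precisely the subcase you label ``insert $n$ into the balanced cycle of $u$ otherwise'': here $w=\pi_n(w)[n]$ with all cycles of $\pi_n(w)$ paired, $u$ has a balanced cycle $b$, and some cycles of $\pi_n(w)$ lie below $b$. Your predecessor rule gives no insertion position (the cycle $[n]$ of $w$ has no other element), and an arbitrary position fails: take $n=3$, $w=\lleft 1,2\rright[3]$ and $u=[1,2]$, so that $\pi_3(w)=\lleft 1,2\rright\mik u$. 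Then $v=[1,2,3]$ satisfies $w\mik v$, but $v=[1,3,2]$ does not, since one computes $w^{-1}[1,3,2]=[1,-3][2]$, of absolute length $3\neq 1$. So the conclusion genuinely depends on where $n$ is spliced into $b$, and nothing in your write-up determines that position or proves $w\mik v$ for it; you defer exactly this point to ``a finite case analysis on the types in (a)--(f)'' without carrying it out.

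The missing idea, which is the substantive content of the paper's proof, is the following. Writing $u=bp$ with $p$ the paired part, one may assume $w_1\cdots w_m\mik b$ and the remaining cycles of $\pi_n(w)$ are disjoint from $b$; the covering relations of types (a), (b) and (f) then produce a \emph{paired} cycle $c=\lleft a_1,\dots,a_i,-a_{i+1},\dots,-a_k\rright$ covered by $b=[a_1,\dots,a_k]$ with $w_1\cdots w_m\mik c$, and the correct choice is $v=[a_1,\dots,a_i,n,a_{i+1},\dots,a_k]\,p$, i.e.\ $n$ is inserted at the sign-change point of $c$; then $w\mik cp[n]\mik v$. Without locating this break point your construction is incomplete, and in the example above choosing the wrong point produces an element that does not lie above $w$.
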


\begin{proof}We may assume that  $w$ does not fix $n$, since otherwise the result is trivial. 
Suppose that $\pi_n(w)=w_1\cdots w_l$ and $u=u_1\cdots u_r$ are written  as  products of disjoint cycles in $P_{n-1}$. 

\smallskip

\noindent{\bf~Case 1:} $P_n=\abs(S_n)$ for $n\geq 1$. 
Then there is an index $i\in\{1,2,\dots, l\}$ such that
$w$ is obtained from $\pi_n(w)$ by inserting $n$ in the cycle $w_i$. 
Let $y$ be the cycle of $w$ containing $n$, so that $\pi_n(y)=w_i$. From the description of the absolute order on $S_n$ given in this section, it follows that $w_i\mik u_j$ for some  
$j\in\{1,2,\dots,r\}$. 
We may insert $n$ in the cycle $u_j$ so that the resulting cycle $v_j$ satisfies $y\mik v_j$. 
 Let $v$ be the element of $S_n$ obtained by
replacing $u_j$ in the cycle decomposition of $u$ by $v_j$. Then $u$ is covered by $v,\,\pi_n(v)=u$ and $w\mik v$.

\smallskip

\noindent{\bf~Case 2:}  $P_n=\mathcal{J}_n$ for $n\geq 2$. 
The result follows by a simple modification of the argument in the previous case, if $[n]$ is not a cycle of $w$. 
Assume the contrary, so that $w=\pi_n(w)[n]$ and all cycles of $\pi_n(w)$ are paired. 
If $u$ has no balanced cycle, then $w\mik u[n]\in \mathcal{J}_n$ and hence $v=u[n]$ has the desired properties. 
Suppose that $u$ has a balanced cycle in its cycle decomposition, say $b=[a_1,\dots,a_k]$. 
We denote by $p$ the product of all paired cycles of $u$, so that $u=bp$. 
If $\pi_n(w)\mik p$, then the choice $v=[a_1,\dots,a_k,n]p$ works. 
Otherwise, we may assume that there is an index $m\in \{1,2,\dots,l\}$ such that $w_1\cdots w_m\mik b$ and $w_i$ and $b$ are disjoint for every $i>m$.  
From the covering relations of $\abs(B_n)$ of types (a), (b) and (f) it follows that there is a paired cycle $c$ which is covered by $b$ and satisfies $w_1\cdots w_m\mik c$. Thus $\pi_n(w)\mik cp\mik u$. 
More specifically, $c$ has the form $\lleft a_1,\dots, a_i,-a_{i+1},\dots,-a_k\rright$ for some $i\in \{2,\dots, k\}$. 
We set $v=[a_1,\dots,a_i,n,a_{i+1},\dots,a_l]p$. Then $v$ covers $u$ and $w\mik cp[n]\mik v$.  This concludes the proof of the lemma. 
\end{proof}


\section{Shellability}
\label{elel}
In this section we prove Theorem \ref{th2} by showing that every closed interval of $\abs(B_n)$
admits an EL-labeling.
Let $C(B_n)$ 
be the set of covering relations of	 $\abs(B_n)$
and $(a,b)\in C(B_n)$. Then $a^{-1}b$ is a reflection of $B_n$, thus
either $a^{-1}b=[i]$ for some $i\in\{1,2,\dots,n\}$, or there exist $i,j\in\{1,2,\dots,n\}$, with $i<j$, such that
$a^{-1}b=\lleft i,j\rright$ or $a^{-1}b=\lleft i,-j\rright$.
We define a map $\lambda:C(B_n)\to \{1,2,\dots,n\}$ as follows:
\[\lambda(a,b)=\left\{
\begin{array}{ll}

i & \mbox{if $a^{-1}b=[i]$}, \\
j & \mbox{if $a^{-1}b=\lleft i,j\rright$ or $\lleft i,-j\rright$}.

\end{array}
\right.\]

A similar labeling was used by Biane \cite{Bi} in order to study the maximal chains
of the poset $NC^B(n)$ of noncrossing $B_n$-partitions.
Figure \ref{figpxel} illustrates the Hasse diagram of the interval
$\left[e,[3,-4]\lleft 1,2\rright\right]$, together with the corresponding labels.

\

\begin{figure}[h]
\begin{center}
\includegraphics[width=4.5in]{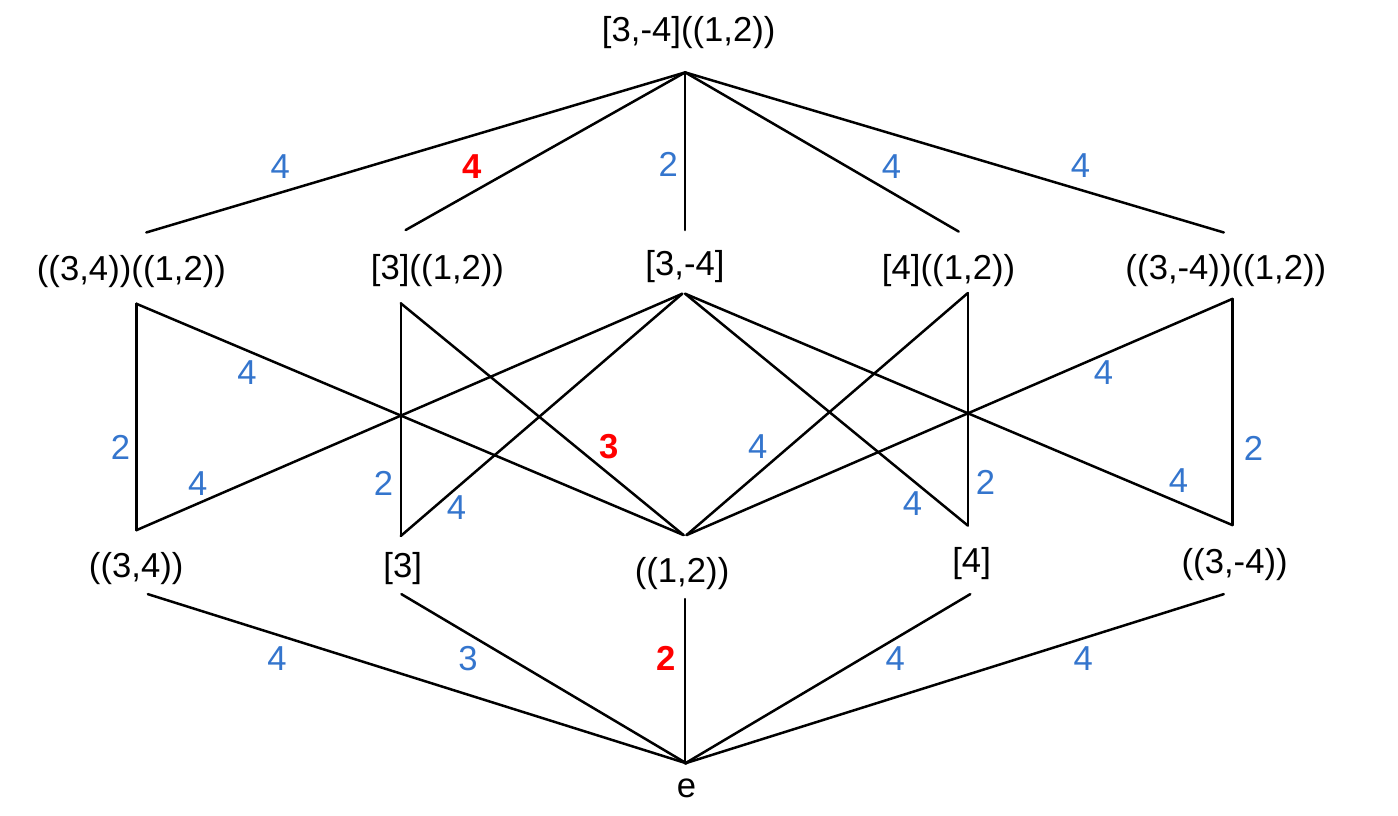}
\end{center}
\os
\label{figpxel}
\end{figure}

\begin{propo}
\label{el}
Let $u,v\in B_n$ with $u\mik v$. Then, the restriction of the map $\lambda$ to the interval $[u,v]$ is an EL-labeling. 
\end{propo}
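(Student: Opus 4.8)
The plan is to prove that $\lambda$ restricts to an EL-labeling on every interval $[u,v]$ of $\abs(B_n)$. By Lemma~\ref{lemarm} it suffices to treat intervals of the form $[e,v]$, since the isomorphism $\phi(w)=u^{-1}w$ preserves the reflection $w^{-1}w'=(\phi w)^{-1}(\phi w')$ attached to each covering relation, hence preserves the labels. By Remark~\ref{anal} we may further reduce to the case where $v$ is a single cycle: the interval $[e,v]$ decomposes as a direct product of intervals below the individual balanced and paired cycles of $v$, and an EL-labeling of a product can be assembled from EL-labelings of the factors provided the label sets are drawn from disjoint (or appropriately ordered) blocks, which here they are because distinct cycles of $v$ involve disjoint sets of absolute values $\{1,\dots,n\}$. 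So the core of the argument is to exhibit, in each interval $[e,v]$ with $v$ a single balanced $k$-cycle or a single paired $k$-cycle, a unique strictly increasing maximal chain and to show it is lexicographically first.

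The main step is a careful combinatorial analysis of maximal chains in $[e,v]$ using the explicit covering relations (a)--(f). I would argue by induction on $\ell_{\mathcal{T}}(v)$, or equivalently set up a canonical reconstruction of the unique increasing chain from the top. For a maximal chain $e\to x_1\to\cdots\to v$, each step $x_{i}\to x_{i+1}$ multiplies on the right by a reflection and the label records the larger index $j$ (or the index $i$ in the balanced case $[i]$). The key observation is that in any interval, the \emph{largest} label that can appear on a cover $x\to v$ into the top element is determined by $v$, and more generally that specifying the increasing condition forces, at each rank, a unique choice of covering reflection. I would make this precise by showing that if a maximal chain is strictly increasing then its label sequence must be the sorted sequence of the labels intrinsically attached to $v$ (one label per rank), and that exactly one maximal chain realizes this sorted sequence. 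Concretely, for $v$ a paired $k$-cycle $\lleft a_1,\dots,a_k\rright$ the increasing chain is built by inserting elements in order of increasing absolute-value label using relations of type (a) and (f), and for a balanced cycle one uses (b), (c), (d) to transition from paired to balanced exactly once, at the rank dictated by the position of the label corresponding to a reflection $[i]$.

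To verify the two defining properties of an EL-labeling I would: first, establish \emph{existence and uniqueness} of the increasing chain by showing the label multiset is chain-independent (it is determined by which reflections must be ``used up'' to pass from $e$ to $v$) and that there is exactly one way to arrange the covers so that labels strictly increase --- at each stage the smallest not-yet-used label corresponds to a unique admissible cover, by inspection of (a)--(f); second, prove the \emph{lexicographic minimality} by showing that any other maximal chain has, at the first rank where it differs from the increasing chain, a strictly larger label, because choosing any cover other than the canonical one at that rank forces a label exceeding the one the increasing chain uses there. This second property follows from the same case analysis: the increasing chain always selects the cover carrying the smallest available label, so any deviation is lexicographically later.

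The hard part will be the bookkeeping in the balanced-cycle case, where the covering relations (b) and (c) convert paired cycles into balanced cycles and can split one cycle into two, so the passage from $e$ up to a balanced $k$-cycle is not simply a monotone ``insertion'' process as in type $A$. I expect the main obstacle to be verifying that, despite these splittings and the asymmetry between the single-index label of $[i]$ and the two-index labels of $\lleft i,\pm j\rright$, the label sequence of any maximal chain in $[e,v]$ is a permutation of one fixed multiset and that the strictly increasing arrangement is uniquely realizable --- in particular ruling out the possibility of two distinct increasing chains, or of no increasing chain, when a balanced reflection $[i]$ competes with paired reflections for the same label value. Once the single-cycle cases are settled by this direct analysis, the reduction via Lemma~\ref{lemarm} and Remark~\ref{anal} completes the proof that $\lambda$ is an EL-labeling on every interval.
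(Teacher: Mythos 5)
Your opening reduction via Lemma \ref{lemarm} matches the paper, and your intended analysis of $[e,v]$ (a canonical increasing chain whose labels are the sorted sequence intrinsically attached to $v$, shown to be greedy and hence lexicographically first) is in spirit exactly what the paper does. However, there is a genuine gap in your second reduction. Remark \ref{anal} gives $[e,w]\cong[e,b]\times[e,p_1]\times\cdots\times[e,p_k]$ where $b$ is the product of \emph{all} balanced cycles of $w$; it does \emph{not} split $[e,b]$ further into a product over the individual balanced cycles, and such a splitting is false. By covering relation (c), a product of two disjoint balanced cycles $[a_1,\dots,a_i][a_{i+1},\dots,a_j]$ covers paired cycles of the form $\lleft a_1,\dots,a_i,-a_{i+1},\dots,-a_j\rright$ whose support meets both balanced cycles, so $\left[e,[a_1,\dots,a_i][a_{i+1},\dots,a_j]\right]$ strictly contains $\left[e,[a_1,\dots,a_i]\right]\times\left[e,[a_{i+1},\dots,a_j]\right]$ --- these larger intervals are the annular noncrossing partition posets $NC^B(p,q)$ mentioned in the introduction. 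Your single-cycle analysis therefore never touches the intervals $[e,w]$ in which $w$ has two or more balanced cycles, which are among the hardest cases, precisely because the labels coming from distinct balanced cycles genuinely interact along a maximal chain.

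The paper avoids this by not decomposing at all: it works directly in $[e,w]$ for arbitrary $w$, forms the single sorted label sequence $c(w)=(c_1<\cdots<c_r)$ drawn from all cycles of $w$ simultaneously, and builds the unique increasing maximal chain rank by rank, verifying at each step that the chosen cover carries the minimum label among all covers available inside $[e,w]$ (which simultaneously yields uniqueness of the increasing chain and its lexicographic firstness). If you wish to retain a product reduction, Remark \ref{anal} does let you strip off the paired cycles one at a time --- after you check that a product of greedy EL-labelings with label sets in disjoint subsets of $\{1,2,\dots,n\}$ is again an EL-labeling, which your proposal asserts but does not prove --- but the entire balanced part must then be treated as a single block, which essentially forces the paper's global construction in any case.
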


\begin{proof}
Let $u,v\in B_n$ with $u\mik v$. We consider the poset isomorphism $\phi:[u,v]\to[e,u^{-1}v]$ from Lemma \ref{lemarm}.
Let $(a,b)\in C([u,v])$. 
Then we have $\phi(a)^{-1}\phi(b)=(u^{-1}a)^{-1}u^{-1}b=a^{-1}uu^{-1}b=a^{-1}b,$
which implies that $\lambda(a,b)=\lambda(\phi(a),\phi(b))$.
Thus, it suffices to show that $\lambda|_{[e,w]}$ is an EL-labeling for the interval $[e,w]$, where $w=u^{-1}v$.

Let $b_1b_2\cdots b_k\, p_1p_2\cdots p_l$ be the cycle decomposition of $w$, where
$b_i=[b_i^1,\dots,b_i^{k_i}]$ for $i\leq k$ and $p_j=\lleft p_j^1,\dots,p_j^{l_j}\rright$, with
$p_j^1=\min\{|p_j^m|:1\leq m\leq l_j\}$ for $j\leq l$.
We consider the sequence of positive integers obtained by placing the numbers $|b_i^h|$ and
$|p_j^m|$, for $i,j,h\geq 1$ and $m>1$, in increasing order. There are $r=\ell_{\mathcal{T}}(w)$ such integers.
To simplify the notation, we denote by $c(w)=(c_1,c_2,\dots,c_r)$ this sequence and say that
$c_{\mu}$ ($\mu=1,2,\dots,r$) belongs to a balanced (respectively, paired) cycle if it is equal to
some $|b_i^h|$ (respectively, $|p_j^m|$). Clearly, we have
\begin{equation}
\label{al}
c_1<c_2<\dots<c_r
\end{equation}
and $\lambda(a,b)\in\{c_1,c_2,\dots,c_r\}$ for all $(a,b)\in C([e,w])$.
To the sequence (\ref{al})
corresponds a unique maximal chain
\[\mathcal{C}_w:\ w_0=e\stackrel{c_1}{\to} w_1\stackrel{c_2}{\to} w_2\stackrel{c_3}{\to}\dots\stackrel{c_r}{\to} w_r=w,\]
which can be constructed inductively as follows (here, the integer $\kappa$ in
$a\stackrel{\kappa}{\to}b$ denotes the label $\lambda(a,b)$). If $c_1$ belongs to a balanced cycle,
then $w_1=[c_1]$. Otherwise, $c_1$ belongs to some $p_i$, say $p_1$, and we set $w_1$ to be either
$\lleft p_1^1, c_1\rright$ or $\lleft p_1^1,-c_1\rright$, so that $w_1\mik p_1$ holds. Note that in
both cases we have $\lambda(e,w_1)=c_1$ and $\lambda(e,w_1)<\lambda(e,w)$ for any other atom
$t\in[e,w]$. Indeed, suppose that there is an atom $t\neq w_1$ such that $\lambda(e,t)=c_1$. We
assume first that $c_1$ belongs to a balanced cycle, so $w_1=[c_1]$. Then $t$ is a reflection of
the form $\lleft c_0,\pm c_1\rright$, where $c_0<c_1$ and, therefore, $c_0$ belongs to some paired
cycle of $w$ (if not then $c_1$ would not be minimum). However from the covering relations of $\abs(B_n)$ written
at the end of Section \ref{relationbn} it follows that $\lleft c_0,\pm c_1\rright\not\mik w$, thus	
$\lleft c_0,\pm c_1\rright\not\in[e,w]$, a contradiction.
Therefore $c_1$ belongs to a paired cycle of $w$, say $p_1$, and $w_1,t$ are both paired reflections. 
Without loss of generality, let $w_1=\lleft p_1^1,c_1\rright$ and $t=\lleft c_0,c_1\rright$, for
some  $c_0<c_1$. By the first covering relation written at the end of Section \ref{relationbn} and
the definition of $\lambda$, it follows that $c_0=p_1^1$, thus $w_1=t$, again a contradiction.

Suppose now that we have uniquely defined the elements $w_1,w_2,\dots, w_j$, so that for every
$i=1,2,\dots,j$ we have $w_{i-1}\to w_i$ with $\lambda(w_{i-1},w_i)=c_i$	 and
$\lambda(w_{i-1},w_i)<\lambda(w_{i-1},z)$ for every $z\in[e,w]$ such that $z\neq w_i$ and
$w_{i-1}\to z$. We consider the number $c_{j+1}$ and distinguish two cases.

\smallskip

\noindent{\bf~Case 1:} $c_{j+1}$ belongs to a cycle whose elements have not been used.
In this case, if $c_{j+1}$ belongs to a balanced cycle, then we set $w_{j+1}=w_j[c_{j+1}]$,
while if $c_{j+1}$ belongs to $p_s$ for some $s\in\{1,2,\dots, l\}$, then we set
$w_{j+1}$ to be either $w_j\,\lleft p_s^1, c_{j+1}\rright$ or $w_j\,\lleft p_s^1,-c_{j+1}\rright$,
so that $w_j^{-1}w_{j+1}\mik p_s$ holds.

\smallskip

\noindent{\bf~Case 2:} $c_{j+1}$ belongs to a cycle some element of which has been used. Then there
exists an index $i<j+1$ such that $c_i$ belongs to the same cycle as $c_{j+1}$. If $c_i,c_{j+1}$
belong to some $b_s$, then there is a balanced cycle of $w_j$, say $a$, that contains $c_i$. In
this case we set $w_{j+1}$ to be the permutation that we obtain from $w_j$ if we add the number
$c_{j+1}$ in the cycle $a$ in the same order and with the same sign that it appears in $b_s$. We
proceed similarly if $c_i,c_{j+1}$ belong to the same paired cycle.

\smallskip

In both cases we have $\lambda(w_j,w_{j+1})=c_{j+1}$. This follows from the covering relations of $\abs(B_n)$ given in
the end of Section \ref{relationbn}. Furthermore, we claim that if $z\in[e,w]$ with $z\neq w_{j+1}$
is such that $w_j\to z$, then $\lambda(w_j,w_{j+1})<\lambda(w_j,z)$.
Indeed, in view of the poset isomorphism $\phi: [u,v]  \to [e, u^{-1}v]$ for $u=w_j$ and $v=w$,
this follows from the special case $j=0$ treated earlier. By definition of $\lambda$ and the
construction of $\mathcal{C}_u$, the sequence
\[\left(\lambda(e,w_1),\lambda(w_1, w_2),\dots,\lambda(w_{r-1}, w)\right)\] coincides with $c(w)$.
Moreover, $\mathcal{C}_w$ is the unique maximal chain having this sequence of labels.
This and the fact that the labels of any chain in $[e,w]$ are elements of the set $\{c_1, c_2,\dots,c_r\}$
imply  that $\mathcal{C}_w$ is the unique strictly increasing maximal chain.
By what we have already shown, $\mathcal{C}_w$ lexicographically precedes all other maximal chains of $[e,w]$.
Thus $\mathcal{C}_w$ is lexicographically first and the unique strictly increasing chain in $[e,w]$.
Hence $\lambda$ is an EL-labeling for the interval $[e,w]$ and Proposition \ref{el} is proved.
\end{proof}

\begin{example}
\begin{enumerate}[leftmargin=*, itemsep=5pt]
\item  [(i)] Let $n= 7$ and $w=[1,-7][3]\lleft2,\,-6,\,-5\rright\lleft4\rright\in B_7$.
Then $c(w)=(1,3,5,6,7)$ and
\[\mathcal{C}_w: e\stackrel{1}{\to}[1]\stackrel{3}{\to}[1][3]\stackrel{5}{\to}[1][3]\lleft2,-5\rright\stackrel{6}{\to}[1][3]\lleft2,-6,-5\rright\stackrel{7}{\to} w.\]
\item  [(ii)] Let $n=4$ and $w=[3,-4]\lleft1,2\rright$.
Then $c(w)=(2,3,4)$ and \[\mathcal{C}_w:e\stackrel{2}{\to} \lleft1,2\rright\stackrel{3}{\to} \lleft1,2\rright[3]\stackrel{4}{\to} w.\]
\end{enumerate}
\end{example}

\noindent{\emph{Proof of Theorem \ref{th2}.}\,
It follows from Proposition \ref{el}, since EL-shellability implies shellability.
\qed

\begin{figure}[h]
\begin{center}
\includegraphics[width=4.32in]{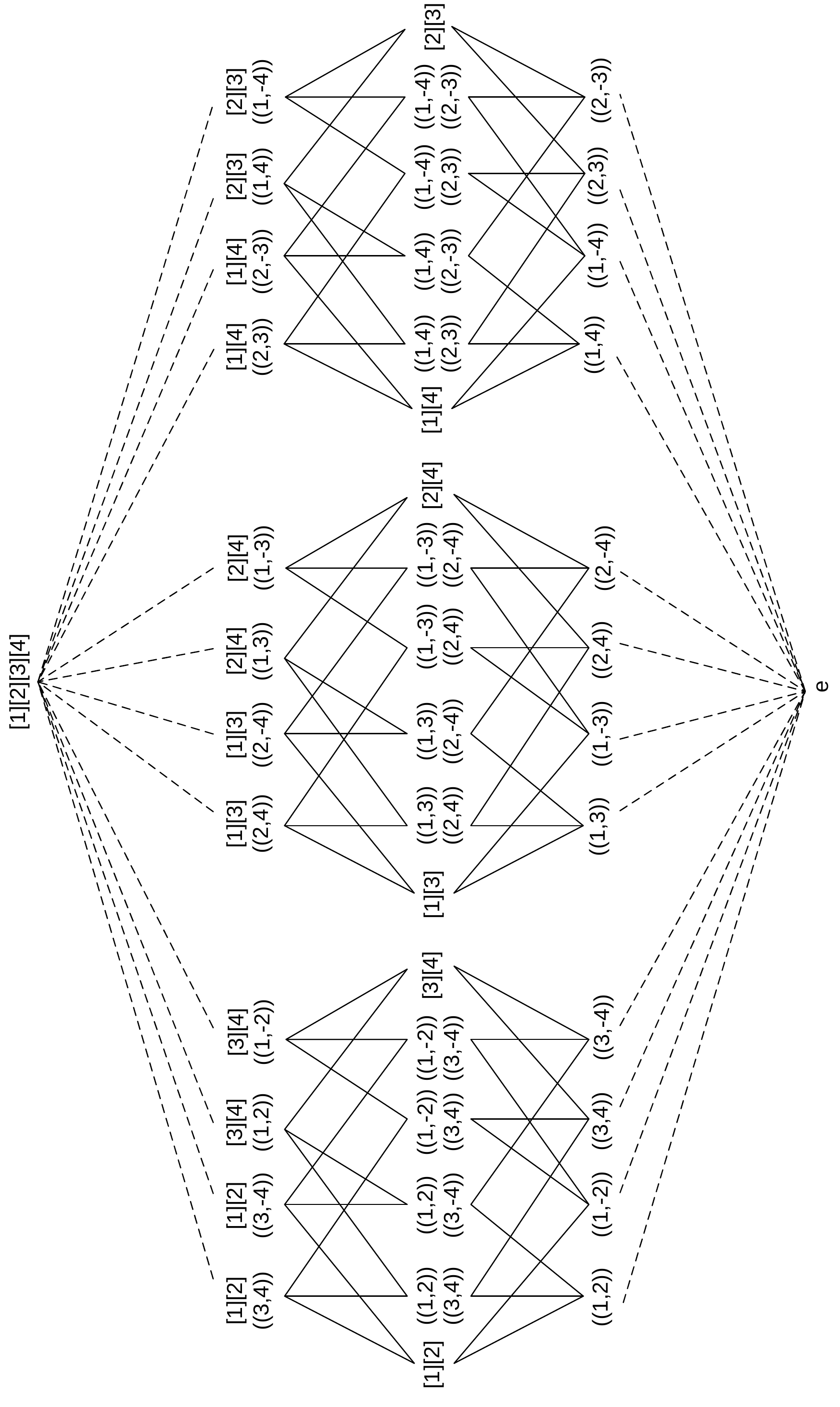}
\end{center}
\os
\label{d4b}
\end{figure}

\begin{remark}
\label{exd4}
Figure \ref{d4b} illustrates the Hasse diagram of the interval $I=\left[e, u\right]$ of
$\abs(D_4)$, where $u= [1][2][3][4]$. Note that the Hasse diagram of the open interval $(e,u)$ is
disconnected and, therefore, $I$ is not Cohen-Macaulay over any field. 
Since $\abs(D_n)$ contains an interval which is isomorphic to $I$ for any $n\geq 4$, it follows that	$\abs(D_n)$
is not Cohen-Macaulay over any field for $n\geq 4$ either (see \cite[Corollary 3.1.9]{mwa}).
\end{remark}



\section{Cohen-Macaulayness}
\label{cmcm}
In this section we prove Theorems \ref{th3} and \ref{th3b}. Our method to show that $\mathcal{J}_n$ is homotopy Cohen-Macaulay is based on 
the following theorem, due to Quillen
\cite[Corollary 9.7]{Q}; see also \cite[Theorem 5.1]{bww}. 
The same method yields a new proof of Theorem \ref{thca1}, which we also include in this section.  

\begin{theorem}
\label{zup}
Let $P$ and $Q$ be graded posets and let $f:P\to Q$ be a surjective rank-preserving poset map. Assume
that for all $q\in Q$ the fiber $f^{-1}(\langle q\rangle)$ is homotopy Cohen-Macaulay. If
$Q$ is homotopy Cohen-Macaulay, then so is $P$.
\end{theorem}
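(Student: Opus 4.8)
The plan is to read Theorem \ref{zup} as the homotopy Cohen--Macaulay instance of a poset fiber theorem and to prove it by verifying, for $\Delta(P)$, the defining link condition, transporting the required connectivity from $Q$ and from the fibers through the simplicial map $\Delta(f)$. First I would extract the combinatorial force of the hypotheses. Because $f$ is rank-preserving, $\rho_Q(f(x))=\rho_P(x)$ for all $x\in P$, so $x<_Py$ forces $\rho_Q(f(x))<\rho_Q(f(y))$ and hence $f(x)<_Qf(y)$; thus $f$ is strictly order-preserving, sends a chain of $P$ to a chain of $Q$ of the same length, and $\Delta(f)\colon\Delta(P)\to\Delta(Q)$ is a dimension-preserving surjection. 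In particular $P$ is pure of the same rank $d$ as $Q$, so the goal is to show that every link of $\Delta(P)$ is as highly connected as its dimension permits.

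Next I would use the join decomposition of links. For a chain $F=\{x_0<x_1<\cdots<x_k\}$ of $P$,
\[
\lk_{\Delta(P)}(F)\ =\ \Delta(P_{<x_0})\ast\Delta((x_0,x_1))\ast\cdots\ast\Delta((x_{k-1},x_k))\ast\Delta(P_{>x_k}),
\]
and since the join of an $a$-connected and a $b$-connected complex is $(a+b+2)$-connected, the desired $(\dim\lk_{\Delta(P)}(F)-1)$-connectivity of every link reduces to a single family of connectivity statements: each open interval $(x,y)_P$ of $P$, together with each lower and upper truncation, should be as highly connected as its own dimension minus one, i.e.\ homotopy Cohen--Macaulay of the rank dictated by its length. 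Since $\Delta(f)$ carries $(x,y)_P$ into $(f(x),f(y))_Q$, and the latter is suitably connected because $Q$ is homotopy Cohen--Macaulay, the natural move is to feed each such restricted map into Quillen's fiber lemma \cite[Corollary 9.7]{Q}, which transports connectivity from the base together with the fibers to the total poset.

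The main obstacle is precisely that the fiber hypothesis, phrased for the principal ideals $f^{-1}(\langle q\rangle)$ of the full map, does not restrict cleanly to the open intervals that appear inside links. Even granting rank-preservation, $f$ need not be surjective onto a sub-interval $(f(x),f(y))_Q$---there may be $q'$ with $f(x)<q'<f(y)$ admitting no $z$ with $x<z<y$ and $f(z)=q'$---and the fibers of the restricted map, namely $(x,y)_P\cap f^{-1}(\langle q'\rangle)$, are not literally the hypothesized fibers. Thus a naive interval-by-interval induction breaks down, and this is where I expect the real work to lie.

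To resolve this I would avoid interval-surjectivity altogether and run the argument through the homotopy colimit description: $\Delta(P)$ is homotopy equivalent to the homotopy colimit over $\Delta(Q)$ of the diagram of fibers, and the generalized Nerve (wedge) lemma then bounds the connectivity of every link of $\Delta(P)$ in terms of the connectivity of the corresponding links of $\Delta(Q)$ and of the fibers, exactly as in \cite[Theorem 5.1]{bww}. The homotopy Cohen--Macaulayness of $Q$ supplies the first input, the hypothesis supplies the second, and the rank-preserving assumption is what aligns the two indices so that ``Cohen--Macaulay fibers over a Cohen--Macaulay base'' meets the connectivity threshold forcing $P$ to be homotopy Cohen--Macaulay. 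The delicate bookkeeping of these connectivity indices is the crux of the proof.
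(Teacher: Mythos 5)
First, a point of comparison: the paper offers no proof of Theorem \ref{zup} at all --- it is quoted as a known result of Quillen \cite[Corollary 9.7]{Q} (see also \cite[Theorem 5.1]{bww}) --- so what you are really attempting is a reconstruction of the proof of the cited fiber theorem. Your setup (the join decomposition of links, the connectivity bound for joins, the reduction to the connectivity of the open intervals of $\hat{P}$) is the right skeleton, and you correctly locate the danger: the restriction of $f$ to a subinterval of $P$ need not satisfy the hypotheses of the theorem. The gap is that your proposed resolution does not actually resolve this. The homotopy colimit/wedge decomposition of $\Delta(P)$ over $\Delta(Q)$ (the main fiber theorem of \cite{bww}) computes the homotopy type, hence the connectivity, of $\Delta(P)$ \emph{itself}; it does not ``bound the connectivity of every link of $\Delta(P)$.'' To control a link one must still apply the machinery to a restricted map, and your own obstacle then reappears. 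As written, the crux is deferred to the citation rather than proved.

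What closes the gap is a case analysis you have not made. (a) For an open interval $(x,y)_{P}$ with $y\in P$ (including $P_{<y}$), no fiber theorem is needed: since $f$ is a poset map, $(x,y)_{P}$ coincides with the open interval $(x,y)$ of the poset $f^{-1}(\langle f(y)\rangle)$, which is homotopy Cohen--Macaulay by hypothesis, and open intervals of homotopy Cohen--Macaulay posets have exactly the connectivity required of a link constituent. (b) For $P_{>x}$, check that the restriction $f\colon P_{>x}\to Q_{>f(x)}$ again satisfies all the hypotheses and induct on rank: surjectivity of this restriction follows because each fiber $f^{-1}(\langle q\rangle)$ is pure of rank $\rho(q)$ (being homotopy Cohen--Macaulay), so any $x$ with $f(x)\prec q$ lies below a maximal element of that fiber, which necessarily maps to $q$; and the fibers of the restriction are the upper sets $f^{-1}(\langle q\rangle)_{>x}$, which are homotopy Cohen--Macaulay because upper sets of homotopy Cohen--Macaulay posets are. (c) Only for the interval $(\hat{0},\hat{1})$, i.e.\ the connectivity of $\Delta(P)$ itself, do you actually need Quillen's connectivity fiber theorem or the wedge decomposition. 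With (a)--(c) in place your outline becomes a proof; without them it is essentially an appeal to the same references the paper cites.
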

For other poset fiber theorems of this type, see \cite{bww}.

\smallskip

To prove Theorems \ref{thca1} and \ref{th3}, we need the following.
Let $\{\hat{0},\hat{1}\}$ be a two element chain, with
$\hat{0}<\hat{1}$ and $i\in\{1,2,\dots,n\}$. 
We consider the map $\pi_i:P_n\to P_n$ of Section \ref{abs}, where $P_n$ is either $\abs(S_n)$ or $\mathcal{J}_n$. 
We define the map

\[f_i:P_n\to \pi_i(P_n)\times\{\hat{0},\hat{1}\}\]
by letting
\[f_i(w)=\left\{\begin{array}{ll}

(\pi_i(w),\,\hat{0}), & \mbox{if $w(i)=i$}, \\
(\pi_i(w),\,\hat{1}), & \mbox{if $w(i)\neq i$}

\end{array}
\right.\]
for $w\in P_n$. We first check that $f_i$ is a surjective rank-preserving poset map. Indeed, by definition $f_i$
is rank-preserving. Let $u,v\in P_n$ with $u\mik v$. Lemma \ref{proj1} (ii) implies that
$\pi_i(u)\mik \pi_i(v)$. If $u(i)\neq i$, then $v(i)\neq i$ as
well and hence $f_i(u)=(\pi_i(u),\hat{1})\leq(\pi_i(v),\hat{1})=f_i(v)$.
If $u(i)=i$, then $f_i(u)=(\pi_i(u),\hat{0})$ 
and hence $f_i(u)\leq f_i(v)$. Thus $f_i$ is a poset map.
Moreover, if $w\in \pi_i(P_n)$, then $f_i^{-1}\left(\{(w,\hat{0})\}\right)=\{w\}$
and any permutation obtained
from $w$ by inserting the element $i$ in a cycle of $w$ lies in
$f_i^{-1}\left(\{(w,\hat{1}\right)\})$. Thus $f_i^{-1}\left(\{q\}\right)\neq\varnothing$ for every $q\in \pi_i(P_n)\times\{\hat{0},\hat{1}\}$, 
which means that $f_i$ is surjective. 

Given a map $f: P \to Q$, we abbreviate by $f^{-1} (q)$
the inverse image  $f^{-1} (\{q\})$ of a singleton subset $\{q\}$ of $Q$.
For subsets $U$ and $V$ of $S_n$ (respectively, of $B_n$), we write $U\cdot V=\{uv: u\in U, v\in V\}$.

\

The following lemmas will be used in the proof of Theorem \ref{thca1}.

\smallskip

\begin{lemma}
\label{f-1}
For every $q\in S_{n-1}\times \{\hat{0},\hat{1}\}$ we have $f_n^{-1}\left(\langle q\rangle\right)=\langle f_n^{-1}(q)\rangle$.
\end{lemma}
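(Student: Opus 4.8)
The plan is to establish the two inclusions $\langle f_n^{-1}(q)\rangle \subseteq f_n^{-1}(\langle q\rangle)$ and $f_n^{-1}(\langle q\rangle)\subseteq \langle f_n^{-1}(q)\rangle$ separately. The first inclusion holds for purely formal reasons: since $f_n$ is a poset map (as verified just before the statement), the preimage $f_n^{-1}(\langle q\rangle)$ of the order ideal $\langle q\rangle$ is itself an order ideal of $\abs(S_n)$, because $y\mik x$ and $f_n(x)\in\langle q\rangle$ force $f_n(y)\mik f_n(x)\in\langle q\rangle$ and hence $y\in f_n^{-1}(\langle q\rangle)$. As this ideal clearly contains $f_n^{-1}(q)$, it must contain the ideal $\langle f_n^{-1}(q)\rangle$ generated by that set. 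I would record this as a short, self-contained observation requiring no special structure of $S_n$.

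For the reverse, harder inclusion I would take an arbitrary $w\in f_n^{-1}(\langle q\rangle)$ and write $q=(u,\epsilon)$ with $u\in S_{n-1}$ and $\epsilon\in\{\hat 0,\hat 1\}$. The relation $f_n(w)\mik q$ unpacks, by the definition of the product order and of $f_n$, into $\pi_n(w)\mik u$ together with a constraint on the second coordinate. The goal is to produce some $v\in f_n^{-1}(q)$ with $w\mik v$, which is exactly the assertion $w\in\langle f_n^{-1}(q)\rangle$. I would split according to $\epsilon$. When $\epsilon=\hat 1$, the fiber $f_n^{-1}(q)$ consists of all permutations obtained from $u$ by inserting $n$ into one of its cycles, and Lemma \ref{antenatel} (in the case $P_n=\abs(S_n)$), applied to $\pi_n(w)\mik u$, yields precisely such a $v$, namely one covering $u$ with $\pi_n(v)=u$ and $w\mik v$; since $v$ does not fix $n$, it lies in $f_n^{-1}(q)$. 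When $\epsilon=\hat 0$, the inequality $f_n(w)\mik(u,\hat 0)$ forces the second coordinate of $f_n(w)$ to be $\hat 0$, i.e.\ $w(n)=n$, so $\pi_n(w)=w$ and hence $w\mik u$; as $f_n^{-1}(q)=\{u\}$ in this case, we conclude $w\in\langle f_n^{-1}(q)\rangle$ directly.

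The substantive content of the lemma is thus concentrated entirely in the reverse inclusion for $\epsilon=\hat 1$, and that work has already been carried out in Lemma \ref{antenatel}: the existence of a covering lift $v$ of $u$ that dominates $w$ is the only nontrivial input. I expect the main obstacle to be bookkeeping rather than conceptual — in particular, in the case $\epsilon=\hat 0$ one must check that the absolute order among permutations fixing $n$ coincides with the order induced from $\abs(S_n)$, so that $\pi_n(w)\mik u$ really gives $w\mik u$ there. This is immediate from the cycle-and-noncrossing description of $\abs(S_n)$ recalled earlier, since deleting the fixed point $n$ alters neither the cycle structure nor the noncrossing condition. Once these routine verifications are in place, the two inclusions combine to give the claimed equality $f_n^{-1}(\langle q\rangle)=\langle f_n^{-1}(q)\rangle$.
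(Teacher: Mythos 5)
Your proof is correct and follows essentially the same route as the paper: the easy inclusion comes formally from $f_n$ being a poset map, the case $q=(u,\hat 0)$ is dispatched as (essentially) trivial, and the substantive inclusion for $q=(u,\hat 1)$ is obtained exactly as in the paper by applying Lemma \ref{antenatel} to produce a cover $v$ of $u$ with $\pi_n(v)=u$ and $w\mik v$, which automatically satisfies $v(n)\neq n$ and hence lies in $f_n^{-1}(q)$. The only difference is that you spell out the $\hat 0$ case in more detail than the paper does.
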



\begin{proof}
The result is trivial for $q=(u,\hat{0})\in S_{n-1}\times\{\hat{0},\hat{1}\}$, so suppose that $q=(u,\hat{1})$.  
Since $f_n$ is a poset map, we have $\langle f_n^{-1}(q)\rangle\subseteq f_n^{-1}\left(\langle q\rangle\right)$. 
For the reverse inclusion consider any element $w\in f_n^{-1}\left(\langle q\rangle\right)$. Then $f_n(w)\leq q$ and hence $\pi_n(w)\mik u$. 
Lemma \ref{antenatel} implies that there exists an element $v\in S_n$ which covers $u$ and satisfies $\pi_n(v)=u$ and $w\mik v$. 
We then have $v\in f_n^{-1}(q)$ and hence $w\in  \langle f_n^{-1}(q)\rangle$. This proves that $f_n^{-1}\left(\langle q\rangle\right)\subseteq\langle f_n^{-1}(q)\rangle$.
\end{proof}

\smallskip

\begin{lemma}
\label{lsn}
For every $u\in S_{n-1}$, the order ideal
\[M(u)=\langle v\in S_n:\pi_n(v)=u\rangle\] of $\abs(S_n)$ is homotopy Cohen-Macaulay of rank $\ell_{\mathcal{T}}(u)+1$.
\end{lemma}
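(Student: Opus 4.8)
My first step is to replace the definition of $M(u)$ by the cleaner description
\[ M(u) \ = \ \pi_n^{-1}([e,u]) \ = \ \{\, w \in S_n : \pi_n(w) \mik u \,\}. \]
The inclusion $M(u) \subseteq \pi_n^{-1}([e,u])$ is immediate from Lemma~\ref{proj1}(ii), since every generator $v$ of $M(u)$ satisfies $\pi_n(v)=u$, so $\pi_n(w)\mik\pi_n(v)=u$ whenever $w\mik v$. The reverse inclusion is exactly Lemma~\ref{antenatel}, which produces, for each $w$ with $\pi_n(w)\mik u$, a generator $v$ (an insertion of $n$ into $u$) with $w\mik v$. From this description the maximal elements of $M(u)$ are precisely the permutations obtained by inserting $n$ into a single cycle of $u$, and each of these has absolute length $\ell_{\mathcal{T}}(u)+1$; hence $M(u)$ is pure of rank $\ell_{\mathcal{T}}(u)+1$, as asserted.

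\textbf{Reduction to a single cycle.} Writing $u=u_1\cdots u_r$ as a product of disjoint cycles, let $M_j(u)$ be the ideal generated by the insertions of $n$ into the cycle $u_j$, so that $M(u)=\bigcup_{j=1}^r M_j(u)$. Any element lying in two of the $M_j(u)$ must fix $n$: the cycle of $w$ through $n$ would otherwise have to be a subcycle of two disjoint cycles of $u$ after deleting $n$. Since the elements of $M_j(u)$ fixing $n$ are exactly $[e,u]$, the intersection of any two, and hence of any subfamily, of the $M_j(u)$ equals $[e,u]$, which is a product of noncrossing partition lattices and so is shellable and homotopy Cohen-Macaulay of rank $\ell_{\mathcal{T}}(u)$. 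Splitting off the cycle $u_j$ gives a poset isomorphism $M_j(u)\cong K(u_j)\times\prod_{i\neq j}[e,u_i]$ in the spirit of Remark~\ref{anal}, where $K(u_j)$ denotes the analogous ideal for the single cycle $u_j$. By Lemma~\ref{tomes}(i) each $M_j(u)$ is homotopy Cohen-Macaulay once $K(u_j)$ is, and then Lemma~\ref{tomes}(ii), applied to the $M_j(u)$ with common intersection $[e,u]$ of rank $\ell_{\mathcal{T}}(u)$, yields the statement. This reduces everything to the case where $u$ is a single $m$-cycle $c$.

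\textbf{The single-cycle case.} Here $K(c)$ is a union of the noncrossing partition lattices $[e,v_k]$, one for each insertion $v_k$ of $n$ into $c$. The naive union argument now fails: although each $[e,v_k]$ is shellable and each pairwise intersection is pure of rank $m-1$, the intersection of the whole family can fail to be pure (for $m\geq 3$ it acquires maximal elements of two different ranks), so Lemma~\ref{tomes}(ii) does not apply to this decomposition. Instead I would induct on $m$ via Quillen's fiber theorem (Theorem~\ref{zup}) applied to the restriction $f_a\colon K(c)\to K(c')\times\{\hat{0},\hat{1}\}$ obtained by deleting a symbol $a$ from the support of $c$, where $c'$ is the resulting $(m-1)$-cycle. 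As in the verification carried out for $f_n$, one checks that $f_a$ is a surjective rank-preserving poset map with image $K(c')\times\{\hat{0},\hat{1}\}$, and the latter is homotopy Cohen-Macaulay by the inductive hypothesis together with Lemma~\ref{tomes}(i); the $\hat{0}$-fibers are closed intervals $[e,y]$ and cause no trouble.

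\textbf{The main obstacle.} The crux is the analysis of the $\hat{1}$-fibers $F(y)=\{\,w\in K(c):\pi_a(w)\mik y\,\}$. Using the localization of Lemma~\ref{f-1} to $K(c)$, each $F(y)$ is the order ideal generated by those insertions of $a$ into $y$ that remain in $K(c)$, and the requirement of lying in $K(c)$ cuts the admissible insertions down to a small, explicitly describable family; this is exactly what prevents the pathological all-at-once intersection from arising, so that $F(y)$ is a union of only a few noncrossing partition intervals. Showing that every such $F(y)$ is homotopy Cohen-Macaulay of the correct rank — by listing the admissible insertions, checking that their low-order intersections are pure of rank $\ell_{\mathcal{T}}(y)$ or $\ell_{\mathcal{T}}(y)+1$, and then applying Lemma~\ref{tomes}(ii) — is the technical heart of the proof, and it is here that the combinatorics of the noncrossing condition and Lemma~\ref{antenatel} do the real work.
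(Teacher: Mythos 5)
Your first two steps track the paper's proof exactly: the identification $M(u)=\pi_n^{-1}([e,u])$ via Lemmas~\ref{proj1} and~\ref{antenatel}, the decomposition $M(u)=\bigcup_j M_j(u)$ with $M_j(u)\cong K(u_j)\times\prod_{i\neq j}[e,u_i]$ and common intersection $\langle u\rangle$, and the application of Lemma~\ref{tomes} are all precisely what the paper does (its $C(u_i)\cdot\langle u_1\cdots\hat u_i\cdots u_l\rangle$ is your $M_j(u)$). You have also correctly diagnosed the real obstruction in the single-cycle case: the intersection of \emph{all} the intervals $[e,v_k]$ at once need not be pure, so Lemma~\ref{tomes}(ii) cannot be applied to that family directly.

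The problem is that at exactly this point your argument stops. You propose to handle $K(c)$ by a Quillen-fiber induction on the cycle length, but the step you defer as ``the technical heart'' — showing that the $\hat 1$-fibers $F(y)$ are homotopy Cohen-Macaulay of the correct rank — is the entire content of the lemma in the single-cycle case, and nothing in your sketch establishes it. Moreover, the auxiliary facts you invoke (surjectivity of $f_a$ onto $K(c')\times\{\hat 0,\hat 1\}$, and the identity $f_a^{-1}(\langle q\rangle)=\langle f_a^{-1}(q)\rangle$) rely on an analogue of Lemma~\ref{antenatel} \emph{relativized to the subideal} $K(c)$: given $w\in K(c)$ with $\pi_a(w)\mik y$, one must produce $v$ covering $y$ with $w\mik v$ and $v$ still lying in $K(c)$. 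This is not what Lemma~\ref{antenatel} says and is not obviously true; it would itself require an analysis of which insertions of $a$ stay below some insertion of $n$ into $c$. For comparison, the paper resolves the single-cycle case in the Appendix (Lemma~\ref{l4}) by a different device: it shows the ideal $K(c)=\bigcup_i[e,w_i]$ is \emph{strongly constructible}, adding the intervals $[e,w_i]$ one at a time and proving (Lemmas~\ref{l1}--\ref{l3}) that each successive intersection $[e,w_{m}]\cap\bigl(\bigcup_{i<m}[e,w_i]\bigr)$ is strongly constructible of rank exactly one less. This incremental, two-at-a-time union is what circumvents the non-purity of the total intersection, and it is the ingredient your proposal is missing.
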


\begin{proof}
Let $u=u_1u_2\cdots u_l$ be written as a product of disjoint cycles in $S_{n-1}$. 
Then \[M(u)=\bigcup\limits_{i=1}\limits^lC(u_i)\cdot\langle u_1\cdots\hat{u}_i\cdots u_l\rangle,\] 
where $u_1\cdots\hat{u}_i\cdots u_l$ denotes the permutation obtained from $u$ by deleting the
cycle $u_i$ and $C(u_i)$ denotes the order ideal of $\abs(S_n)$ generated by the cycles $v$ of $S_n$ which cover $u_i$ and satisfy $\pi_n(v)=u_i$. 
Lemma \ref{l4}, proved in the Appendix, implies that $C(u_i)$ is homotopy Cohen-Macaulay of rank $\ell_{\mathcal{T}}(u_i)+1$ for every $i$. 
Each of the ideals $C(u_i)\cdot\langle u_1\cdots\hat{u}_i\cdots u_l\rangle$ is isomorphic to a direct product of 
homotopy Cohen-Macaulay posets and 
hence it is homotopy Cohen-Macaulay, by Lemma \ref{tomes} (i); their rank is equal to $\ell_{\mathcal{T}}(u)+1$. 
Moreover, the intersection of any two or more of the ideals $C(u_i)\cdot\langle u_1\cdots\hat{u}_i\cdots u_l\rangle$ is equal to 
$\langle u\rangle$, which is homotopy Cohen-Macaulay of rank $\ell_{\mathcal{T}}(u)$. 
Thus the result follows from Lemma \ref{tomes} (ii). 
\end{proof}

\noindent \emph{Proof of Theorem \ref{thca1}.}\ We proceed by induction on $n$. The result is trivial for
$n\leq 2$. Suppose that the poset $\abs(S_{n-1})$ is homotopy Cohen-Macaulay. 
Then so is the direct product
$\abs(S_{n-1})\times\{\hat{0},\hat{1}\}$ by Lemma \ref{tomes} (i). We consider the map
\[f_n:\abs(S_n)\to \abs(S_{n-1})\times\{\hat{0},\hat{1}\}.\]
In view of Theorem \ref{zup} and Lemma \ref{f-1}, it suffices to show that for every
$q\in S_{n-1}\times\{\hat{0},\hat{1}\}$ the order ideal
$\langle f_n^{-1}(q)\rangle$ of $\abs(S_n)$ is homotopy
Cohen-Macaulay.
This is true in case $q=(u,\hat{0})$ for some $u\in S_{n-1}$, since then 
$\langle f_n^{-1}(q)\rangle=\langle u\rangle$ and every interval in $\abs(S_n)$ is shellable.
Suppose that
$q=(u,\hat{1})$. Then $\langle f_n^{-1}(q)\rangle= M(u)$, which is  homotopy Cohen-Macaulay by Lemma \ref{lsn}.
This completes the induction and the proof of the theorem.\qed

\

We now focus on the hyperoctahedral group. 
The proof of Theorem \ref{th3} is based on the following lemmas.

\begin{lemma}
\label{g^{-1}}
For every $q\in \mathcal{J}_{n-1}\times \{\hat{0},\hat{1}\}$ we have $f_n^{-1}\left(\langle q\rangle\right)=\langle f_n^{-1}(q)\rangle$.
\end{lemma}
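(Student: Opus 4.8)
The plan is to prove Lemma \ref{g^{-1}} exactly as Lemma \ref{f-1} was proved, replacing $\abs(S_n)$ and $\abs(S_{n-1})$ by $\mathcal{J}_n$ and $\mathcal{J}_{n-1}$ throughout. The reason this should be possible with no new combinatorial work is that the single nontrivial ingredient, Lemma \ref{antenatel}, has been stated and proved for $P_n=\mathcal{J}_n$ as well as for $P_n=\abs(S_n)$; everything else in the argument is formal.

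First I would dispose of the case $q=(u,\hat{0})$, in which $\langle q\rangle=\{(u,\hat{0})\}$ and $f_n^{-1}(q)=\{u\}$, so that both sides reduce to $\langle u\rangle$. Thus I may assume $q=(u,\hat{1})$ with $u\in\mathcal{J}_{n-1}$. The inclusion $\langle f_n^{-1}(q)\rangle\subseteq f_n^{-1}(\langle q\rangle)$ is immediate from the fact (established before the lemma) that $f_n$ is a poset map: if $w\mik v$ for some $v\in f_n^{-1}(q)$, then $f_n(w)\leq f_n(v)=q$, so $w\in f_n^{-1}(\langle q\rangle)$. For the reverse inclusion I would take an arbitrary $w\in f_n^{-1}(\langle q\rangle)\subseteq\mathcal{J}_n$, note that $f_n(w)\leq q=(u,\hat{1})$ forces $\pi_n(w)\mik u$ on the first coordinate, and then apply Lemma \ref{antenatel} with $P_n=\mathcal{J}_n$ to the pair $w\in\mathcal{J}_n$, $u\in\mathcal{J}_{n-1}$. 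This yields $v\in\mathcal{J}_n$ that covers $u$ and satisfies $\pi_n(v)=u$ and $w\mik v$. Since $v$ covers $u=\pi_n(v)$, the element $v$ does not fix $n$, so $f_n(v)=(\pi_n(v),\hat{1})=q$, i.e. $v\in f_n^{-1}(q)$; together with $w\mik v$ this gives $w\in\langle f_n^{-1}(q)\rangle$.

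I do not anticipate a genuine obstacle in this lemma itself. The one place where the type $B$ (ideal) situation could have diverged from the type $A$ situation — producing, given $\pi_n(w)\mik u$, a cover $v$ of $u$ that stays inside the ideal $\mathcal{J}_n$ rather than merely inside $\abs(B_n)$, in particular when $w$ carries a balanced cycle containing $n$ — is precisely the content of Case 2 in the proof of Lemma \ref{antenatel}, and is therefore already available. The only points I would take care to verify are bookkeeping ones: that the hypothesis $q\in\mathcal{J}_{n-1}\times\{\hat{0},\hat{1}\}$ legitimately supplies $u\in\mathcal{J}_{n-1}$ for the application of Lemma \ref{antenatel}, and that the element $v$ it returns indeed lies in $\mathcal{J}_n$, so that the membership $v\in f_n^{-1}(q)$ is meaningful. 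Both are guaranteed by the statement of Lemma \ref{antenatel}.
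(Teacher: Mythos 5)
Your proposal is correct and is exactly the paper's argument: the paper's proof of Lemma \ref{g^{-1}} simply states that the proof of Lemma \ref{f-1} applies word for word with $S_{n-1}$ replaced by $\mathcal{J}_{n-1}$, relying as you do on the fact that Lemma \ref{antenatel} was already established for $P_n=\mathcal{J}_n$. Your verification that the element $v$ produced by Lemma \ref{antenatel} lies in $\mathcal{J}_n$ and satisfies $f_n(v)=q$ is the right bookkeeping and matches the intended argument.
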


\begin{proof}
The proof of Lemma \ref{f-1} applies word by word, if one replaces $S_{n-1}$ by the ideal $\mathcal{J}_{n-1}$. We thus omit the details. 
\end{proof}

\begin{lemma}
\label{yohoho}
For every $u\in  \mathcal{J}_{n-1}$ the order ideal
\[M(u)=\langle v\in \mathcal{J}_n:\,\pi_n(v)=u\rangle\]
of $\abs(B_n)$ is homotopy Cohen-Macaulay of rank $\ell_{\mathcal{T}}(u)+1$.
\end{lemma}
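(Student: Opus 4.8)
The plan is to mirror the structure of the proof of Lemma \ref{lsn}, decomposing $M(u)$ as a union of ideals indexed by the cycles of $u$, where each piece is recognized as a direct product of homotopy Cohen-Macaulay posets, and then to invoke Lemma \ref{tomes}. Write $u=u_1u_2\cdots u_l$ as a product of disjoint cycles in $\mathcal{J}_{n-1}$. The element $\pm n$ can be inserted into an existing cycle $u_i$ of $u$, or it can be appended as a new balanced cycle $[n]$ (which is only legitimate when the result still lies in $\mathcal{J}_n$, i.e. when $u$ has no balanced cycle among the $u_i$, since a maximal element of $\mathcal{J}_n$ is a balanced $n$-cycle and more generally a product of balanced cycles whose lengths sum to $n$). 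Accordingly I would write
\[M(u)=\bigcup_{i=1}^{l} C(u_i)\cdot\langle u_1\cdots\hat{u}_i\cdots u_l\rangle\ \cup\ (\text{term from }[n]),\]
where $C(u_i)$ is the order ideal of $\abs(B_n)$ generated by the cycles $v$ that cover $u_i$ and satisfy $\pi_n(v)=u_i$, and the last term $\langle u[n]\rangle$ appears exactly when $u$ has only paired cycles.

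The key steps, in order, are as follows. First I would establish the decomposition above, using the covering relations (a)--(f) of $\abs(B_n)$ to check that every $v\in\mathcal{J}_n$ with $\pi_n(v)=u$ arises either by inserting $\pm n$ into one of the cycles $u_i$ or by adjoining $[n]$, and that the ideal it generates is contained in the corresponding term. Second, each summand $C(u_i)\cdot\langle u_1\cdots\hat{u}_i\cdots u_l\rangle$ should be shown isomorphic to a direct product $C(u_i)\times\langle u_1\cdots\hat{u}_i\cdots u_l\rangle$; this is the type-$B$ analogue of Remark \ref{anal}, and homotopy Cohen-Macaulayness then follows from Lemma \ref{tomes}(i) provided $C(u_i)$ is homotopy Cohen-Macaulay. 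Third, I expect to need a type-$B$ analogue of the Appendix lemma (\ref{l4}) guaranteeing that $C(u_i)$ is homotopy Cohen-Macaulay of rank $\ell_{\mathcal{T}}(u_i)+1$; granting this, every summand has rank $\ell_{\mathcal{T}}(u)+1$. Fourth, I would verify that the intersection of any two or more summands equals $\langle u\rangle$, which is homotopy Cohen-Macaulay (being a shellable interval by Theorem \ref{th2}) of rank $\ell_{\mathcal{T}}(u)$, and that the extra term $\langle u[n]\rangle$, when present, is itself a shellable interval of the correct rank intersecting the rest in $\langle u\rangle$. Lemma \ref{tomes}(ii) then assembles these into the conclusion that $M(u)$ is homotopy Cohen-Macaulay of rank $\ell_{\mathcal{T}}(u)+1$.

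The main obstacle will be the behavior of the balanced cycle $[n]$ and, more subtly, the structure of $C(u_i)$ when $u_i$ is a paired cycle, because in type $B$ inserting $\pm n$ into a paired cycle $u_i$ can produce \emph{either} a longer paired cycle (relation (a)) \emph{or} a balanced cycle (relation (b) or (c)), so $C(u_i)$ is genuinely richer than its type-$A$ counterpart and its homotopy Cohen-Macaulayness is not immediate. This is exactly where the analogue of Lemma \ref{l4} for $B_n$ must do the real work, and I would isolate that computation as a separate lemma (parallel to the Appendix treatment for $S_n$). A secondary care point is the case analysis forced by the definition of $\mathcal{J}_n$: one must ensure that adjoining $[n]$ or converting a paired cycle into a balanced one never leaves the ideal $\mathcal{J}_n$, and that the resulting decomposition satisfies the rank hypotheses ($n$ or $n-1$) of Lemma \ref{tomes}(ii). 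Once these bookkeeping constraints and the structure of $C(u_i)$ are under control, the remaining arguments are the direct-product and union manipulations already used in the symmetric-group case.
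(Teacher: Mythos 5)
Your proposal matches the paper's proof essentially step for step: the same decomposition of $M(u)$ into the ideals $C(u_i)\cdot\langle u_1\cdots\hat{u}_i\cdots u_l\rangle$ (with the extra term $\langle u[n]\rangle$ exactly when $u$ has no balanced cycle), the same reduction of each summand to a direct product handled by Lemma \ref{tomes}(i), the same identification of all pairwise intersections with $\langle u\rangle$, and the same appeal to a type-$B$ Appendix lemma (the paper's Lemma \ref{l4'}) for the homotopy Cohen-Macaulayness of $C(u_i)$, finishing with Lemma \ref{tomes}(ii). The only quibble is that relations (b) and (c) do not insert a new element, so the generators of $C(u_i)$ are just the cycles obtained by inserting $\pm n$ into $u_i$; this does not affect the argument.
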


\begin{proof}
Let  $u=u_1u_2\cdots u_l\in\mathcal{J}_{n-1}$ be written as a product of disjoint cycles. 
For $i\in\{1,\dots,l\}$, we denote by $C(u_i)$ the order ideal of $\mathcal{J}_n$ generated by 
all cycles $v\in\mathcal{J}_n$ which can be obtained by inserting either $n$ or $-n$ at any place in the cycle $u_i$. 
The ideal $C(u_i)$ is graded of rank $\ell_{\mathcal{T}}(u_i)+1$ 
and  homotopy Cohen-Macaulay, by Lemma \ref{l4'} proved in the Appendix.
Let $u_1\cdots \hat{u}_i\cdots u_l$ denote the permutation obtained from $u$ by removing the cycle $u_i$. 
Suppose first that $u$ has a balanced cycle in its cycle decomposition. Using Remark \ref{anal}, we find that 
\[M(u)=\bigcup\limits_{i=1}\limits^lC(u_i)\cdot\langle u_1\cdots \hat{u}_i\cdots u_l\rangle.\]
Clearly, $M(u)$ is graded of rank $\ell_{\mathcal{T}}(u)+1$. 
Each of the ideals $C(u_i)\cdot\langle u_1\cdots\hat{u}_i\cdots u_l\rangle$ is isomorphic to a direct product of homotopy Cohen-Macaulay posets and 
hence it is homotopy Cohen-Macaulay, by Lemma \ref{tomes} (i). 
Moreover, the intersection of any two or more of these ideals 
is equal to $\langle u\rangle$, which is homotopy Cohen-Macaulay of rank $\ell_{\mathcal{T}}(u)$, by Theorem \ref{th2}. 
Suppose now that $u$ has no balanced cycle in its cycle decomposition. 
Then 
\[M(u)=\bigcup\limits_{i=1}\limits^l C(u_i)\cdot\langle u_1\cdots \hat{u}_i\cdots u_l\rangle\cup\langle u[n]\rangle.\] 
Again,  $M(u)$ is graded of rank $\ell_{\mathcal{T}}(u)+1$,  
each of the ideals $C(u_i)\langle u_1\cdots \hat{u}_i\cdots u_l\rangle$ and  $\langle u[n]\rangle$ is homotopy Cohen-Macaulay and the intersection of any two or more of 
these ideals is equal to $\langle u\rangle$. In either case, the result follows from Lemma \ref{tomes} (ii).
\end{proof}

\noindent\emph{Proof of Theorem \ref{th3}.}\ 
We proceed induction on $n$. The result is trivial for
$n\leq 2$. Suppose that the poset $\mathcal{J}_{n-1}$  is homotopy Cohen-Macaulay. 
Then so is the direct product
$\mathcal{J}_{n-1}\times\{\hat{0},\hat{1}\}$ by Lemma \ref{tomes} (i). We consider the map
\[f_n:\mathcal{J}_n\to\mathcal{J}_{n-1}\times\{\hat{0},\hat{1}\}.\]
In view of Theorem \ref{zup} and Lemma \ref{g^{-1}}, it suffices to show that for every
$q\in\mathcal{J}_{n-1}\times\{\hat{0},\hat{1}\}$ the order ideal
$\langle f_n^{-1}(q)\rangle$ of $\abs(B_n)$ is homotopy
Cohen-Macaulay.
This is true in case $q=(u,\hat{0})$ for some $u\in \mathcal{J}_{n-1}$, since then 
$\langle f_n^{-1}(q)\rangle=\langle u\rangle$ and every interval in $\abs(B_n)$ is shellable by Theorem \ref{th2}.
Suppose that
$q=(u,\hat{1})$. Then $\langle f_n^{-1}(q)\rangle= M(u)$, which is  homotopy Cohen-Macaulay by Lemma \ref{lsn}.
This completes the induction and the proof of the theorem.\qed

\

\noindent\emph{Proof of Theorem \ref{th3b}.}\ 
Let us denote by $\hat{0}$ the minimum element of $\abs(B_n)$.
Let $\hat{\mathcal{J}}_n$ be the poset obtained from $\mathcal{J}_n$ by adding
a maximum element $\hat{1}$ and let $\mu_n$ be the
M\"obius function of $\hat{\mathcal{J}}_n$.
From Proposition 3.8.6 of \cite{St} we have that $\tilde{\chi} (\Delta (\bar{\mathcal{J}}_n)) = \mu_n (\hat{0},
\hat{1})$. Since $\mu_n(\hat{0},\hat{1})=-\sum\limits_{x \in \mathcal{J}_n} \ \mu_n
(\hat{0}, x)$, we have
\begin{equation}
\label{eq1}
\tilde{\chi} (\Delta (\bar{\mathcal{J}}_n)) = -\sum\limits_{x \in \mathcal{J}_n} \ \mu_n
(\hat{0}, x).
\end{equation}

\noindent Suppose that $x\in B_n$ is a cycle. It is known \cite{R} that

\[\mu(\hat{0},x)=\left\{\begin{array}{ll}

(-1)^m{2m-1\choose k}, & \mbox{if $x$ is a balanced $m$-cycle}, \\
(-1)^{m-1}C_{m-1}, & \mbox{if $x$ is a paired $m$-cycle},

\end{array}
\right.\]
where $C_m=\frac{1}{m+1}{2m\choose m}$ is the $m$th Catalan number.
We recall (Remark \ref{anal}) that if $x\in\mathcal{J}_n$
has exactly $k+1$ paired cycles, say $p_1,\dots,p_{k+1}$, and one balanced cycle, say $b$, then 
$[\hat{0},x]\cong [\hat{0},b]\times[\hat{0},p_1]\times\cdots\times[\hat{0},p_k]$
 and hence 
\[\mu_n(\hat{0},x)=\mu_n(\hat{0},b)\,\prod\limits_{i=1}\limits^k\, \mu_n(\hat{0},p_i).\]
It follows that

\begin{equation}
\label{eq2}
\mu_n(\hat{0},x)=(-1)^{\ell_{\mathcal{T}}(b)}{2\ell_{\mathcal{T}}(b)-1\choose \ell_{\mathcal{T}}(b)} \prod\limits_{i=1}\limits^k (-1)^{\ell_{\mathcal{T}}(p_i)}C_{\ell_{\mathcal{T}}(p_i)}.
\end{equation}
From (\ref{eq1}), (\ref{eq2}), \cite[Proposition 5.1.1]{St2} and the exponential formula  \cite[Corollary 5.1.9]{St2}, we conclude that
\begin{equation}
\label{eqa}
1 - \sum_{n \ge 2} \tilde{\chi} (\Delta (\bar{\mathcal{J}}_n)) \frac{t^n}{n!}
= \left( 1+\sum_{n \geq 1}2^{n-1}\al_n\frac{t^n}{n}\right) \exp\left(\sum_{n \geq 1}2^{n-1}\beta_n \frac{t^n}{n}\right),
\end{equation}
where $\al_n=(-1)^n{2n-1\choose n}$ is the M\"obius function of a balanced $n$-cycle and $\beta_n=(-1)^{n-1} C_{n-1}$
is the M\"obius function of a paired $n$-cycle.
Thus it suffices to compute $\exp\left(\sum_{n \geq 1}2^{n-1}\beta_n \frac{t^n}{n}\right)$.
From \cite[ Section 5]{ca} we have that
\[\exp\,\sum_{n \ge 1} \beta_n\frac{t^n}{n}=\frac{\sqrt{1+4t}-1}{2t}\,\exp\left(\sqrt{1+4t}-1\right)\]
and hence, replacing $t$ by $2t$,
\[\exp\left(\sum_{n \geq 1}	 2^{n-1} \beta_n \frac{t^n}{n}\right)=
\left(\frac{\sqrt{1+8t}-1}{4t}\right)^{1/2}\exp\left(\frac{\sqrt{1+8t}-1}{2}\right).\]
The right-hand side of (\ref{eqa}) can now be written as
\[
1-\left(\frac{\sqrt{1+8t}-1}{4t}\right)^{1/2}\exp\left(\frac{\sqrt{1+8t}-1}{2}\right)\left(1+\sum_{n \geq 1}2^{n-1}\al_n\frac{t^n}{n}\right).\]

\noindent The result follows by switching $t$ to $-t$.
\qed

\

\begin{remark}
Theorem \ref{th3} can also be proved using the notion of strong constructibility,
introduced in \cite{ca}. The details will appear in \cite{myr}.
\end{remark}


\section{Intervals with the lattice property}
\label{latlat}
Let $W$ be a finite Coxeter group and $c\in W$ be a Coxeter element. It is known \cite{Be,brwtt,brwtt0} that the interval $[e,c]$ 
in $\abs(W)$ is a lattice. 
In this section we characterize the intervals in $\abs(B_n)$ and $\abs(D_n)$ which are lattices (Theorems \ref{th5} and \ref{th6}, respectively). 
As we explain in the sequel, some partial results in this direction were obtained in \cite{Be, brwtt, brwtt0, N, R}.

To each $w\in B_n$ we associate the integer partition $\mu(w)$ whose parts are the absolute lengths of all balanced cycles of $w$, 
arranged in decreasing order. 
  For example, if $n=8$ and $w=[1,-5][2,7][6]\lleft 3,4\rright$, then $\mu(w)=(2,2,1)$. 
It follows from the results of \cite[Section 6]{N} that the interval $[e,w]$ in $\abs(B_n)$ is a lattice if $\mu(w)=(n-1,1)$ 
and that $[e, w]$ is not a lattice if $\mu(w)=(2,2)$. 
Recall that a \emph{hook partition} is an integer partition of the form $\mu=(k,1,\dots,1)$, also written as $\mu=(k,1^r)$, where $r$ is one less than the total number of parts of $\mu$. 
Our main results in this section are the following.

\begin{theorem}
\label{th5}
For $w\in B_n$, the interval $[e,w]$ in $\abs(B_n)$ is a lattice if and only if $\mu(w)$ is a hook partition. 
\end{theorem}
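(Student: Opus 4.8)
The plan is to prove both directions using the product decomposition of intervals recorded in Remark~\ref{anal}, which reduces the lattice question to understanding the factors $[e,b]$ and $[e,p_i]$ and, crucially, how several balanced cycles interact. By Remark~\ref{anal}, for $w=bp$ with $p=p_1\cdots p_k$ a product of disjoint paired cycles and $b$ the product of all balanced cycles of $w$, we have $[e,w]\cong[e,b]\times[e,p_1]\times\cdots\times[e,p_k]$. Since a direct product of posets is a lattice if and only if each factor is a lattice (for finite posets with a minimum), and since each $[e,p_i]$ is a noncrossing partition lattice $NC(S_{l_i+1})$ and hence a lattice, the paired cycles are harmless. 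Thus $[e,w]$ is a lattice if and only if $[e,b]$ is a lattice, where $b$ is the product of the balanced cycles of $w$, and $\mu(w)$ records exactly the lengths of these balanced cycles. So the theorem reduces to the statement: \emph{$[e,b]$ is a lattice if and only if the partition of balanced-cycle lengths is a hook.}

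First I would dispose of the ``if'' direction. A hook partition $\mu=(k,1^r)$ means $b$ has one balanced cycle of length $k$ and $r$ balanced $1$-cycles $[a]$. The key observation is that a balanced $1$-cycle $[a]$ and a single longer balanced cycle can be ``absorbed'': the interval $[e,w]$ where $w$ has exactly one balanced cycle together with some balanced $1$-cycles sits inside a maximal interval of the type $[e,x]$ with $x$ a single balanced $n$-cycle or the annular-type intervals $NC^B(p,q)$ studied in \cite{krat,N0,N}. Concretely, I would argue that when $\mu(w)=(n-1,1)$ the interval is $NC^B(n-1,1)$, shown to be a lattice in \cite[Section 6]{N}, and that the general hook case reduces to this (or to $NC^B(n)$, the type-$B$ noncrossing partition lattice of Reiner \cite{R}) by the product decomposition together with the fact that several balanced $1$-cycles contribute factors that remain lattices. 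The cleanest route is to identify $[e,b]$ for a hook directly with a known lattice from the annular noncrossing partition literature and invoke those results.

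The substance is in the ``only if'' direction: if $\mu(w)$ is \emph{not} a hook, then $[e,w]$ is not a lattice. A non-hook partition has at least two parts exceeding $1$; by the product decomposition it suffices to produce a single non-hook $b$ whose interval fails the lattice property and then note that any non-hook $\mu(w)$ contains, as a ``sub-balanced-part,'' a configuration forcing failure. The paper hands me exactly the needed seed: it is stated (from \cite[Section 6]{N}) that $[e,w]$ is \emph{not} a lattice when $\mu(w)=(2,2)$. The plan is to show that the two-balanced-cycle obstruction persists: if $b=b_1b_2$ with $b_1,b_2$ balanced cycles each of length $\ge 2$, one exhibits two elements of $[e,b]$ with two distinct minimal upper bounds (or two maximal lower bounds), mirroring the $(2,2)$ example but with the extra entries carried along passively. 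Because a direct product with any poset having a $\hat 0$ preserves the failure of a least upper bound, once one factor (the balanced part) is not a lattice the whole interval is not a lattice; so it suffices to reduce an arbitrary non-hook balanced part to the $(2,2)$ case by a further product splitting or by embedding the $(2,2)$ obstruction.

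The main obstacle I anticipate is the ``only if'' reduction of an arbitrary non-hook partition to the single known $(2,2)$ obstruction in a way that is fully rigorous rather than merely suggestive. Two subtleties arise: first, I must verify that balanced cycles do \emph{not} factor as an honest direct product across different balanced cycles the way balanced-versus-paired or distinct paired cycles do---indeed Remark~\ref{anal} deliberately groups all balanced cycles into a single factor $b$, precisely because the balanced cycles interact through the shared sign structure (a $(2,2)$ balanced configuration is not simply a product of two $NC$-type lattices). So the argument cannot just split $b$ into independent balanced factors; instead I must locate, inside $[e,b]$ for any non-hook $b$, a concrete pair of elements witnessing two incomparable minimal upper bounds, generalizing the explicit $(2,2)$ witness. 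Second, I must confirm that adding further balanced $1$-cycles or lengthening parts beyond $(2,2)$ does not accidentally restore the lattice property; the safe way is to show any two balanced cycles of length $\ge 2$ already generate the $(2,2)$ obstruction as a subinterval, using Lemma~\ref{lemarm} to translate and the covering relations (a)--(f) to exhibit the competing joins explicitly. Carrying out this explicit witness construction---and checking it via the cycle-combinatorial description of $\preceq$ on $B_n$---is where the real work lies.
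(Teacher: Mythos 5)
Your ``only if'' direction is sound and is essentially the paper's argument: if $\mu(w)$ is not a hook then $w$ has two balanced cycles $w_1,w_2$ of absolute length $\geq 2$, one finds $[i,j]\mik w_1$ and $[l,m]\mik w_2$ with $|i|,|j|,|l|,|m|$ distinct, and since $[e,[i,j][l,m]]$ is not a lattice (Nica--Oancea) while a closed subinterval $[e,u]$ with $u\mik w$ of a lattice $[e,w]$ would have to be a lattice, $[e,w]$ cannot be one. No product argument or ``carrying entries along passively'' is needed for this step, and it is the \emph{easy} half, not the place where the real work lies.

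The genuine gap is in your ``if'' direction. You propose to handle a general hook $(k,1^r)$ by combining the known lattices $NC^B(n)$ and $NC^B(n-1,1)$ with ``the product decomposition together with the fact that several balanced $1$-cycles contribute factors that remain lattices.'' But balanced cycles do \emph{not} split off as direct factors: $[e,[1][2]]$ has six elements ($e$, $[1]$, $[2]$, $\lleft 1,2\rright$, $\lleft 1,-2\rright$, $[1][2]$), whereas $[e,[1]]\times[e,[2]]$ has four --- a point you yourself concede in the ``only if'' discussion and then contradict here. Consequently $[e,b]$ for a hook with $r\geq 2$ is not a product of known lattices, nor is it any poset covered by the annular noncrossing partition literature (which treats only $\mu=(n)$ and $\mu=(p,q)$, and the latter is a lattice only for $q=1$); for instance the involution poset $\mathcal{L}_n=L(0,n)$ is such an interval and the paper must prove its lattice property from scratch. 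This is exactly where the paper's Proposition~\ref{charlatt} does its work: an induction on $k+r$ with base cases $r\leq 1$ from Reiner and Goulden--Nica--Oancea, in which for given $u,v\in L(\kappa,\rho)$ one deletes a suitable letter $i$ from their cycle decompositions to get $u',v'$ in a smaller $L(\kappa_1,\rho_1)$ with $[e,u]\cap[e,v]=[e,u']\cap[e,v']$, and concludes via the criterion that a bounded poset in which all pairs have a meet is a lattice. Your proposal contains no substitute for this induction, so the ``if'' direction is not established.
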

\begin{theorem}
\label{th6}
For $w\in D_n$, the interval $[e,w]$ in $\abs(D_n)$ is a lattice if and only if  $\mu(w)=(k,1)$ for some $k\leq n-1$, or $\mu(w)=(1,1,1,1)$. 
\end{theorem}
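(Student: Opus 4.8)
The plan is to characterize lattice intervals in $\abs(D_n)$ by reducing to the structure theory already developed for $\abs(B_n)$, exploiting the fact that $\abs(D_n)$ is a subposet of $\abs(B_n)$ and that $D_n$ consists precisely of those $w\in B_n$ with an even number of balanced cycles. First I would invoke Remark \ref{anal} and its $D_n$-analogue: for $w\in D_n$ the interval $[e,w]$ decomposes as a direct product $[e,b]\times[e,p_1]\times\cdots\times[e,p_k]$, where $b$ is the product of all balanced cycles of $w$ and the $p_i$ are the paired cycles. Since a direct product of bounded posets is a lattice if and only if each factor is, and since every interval $[e,p_i]$ below a single paired cycle is a lattice (being isomorphic to a noncrossing-partition interval $NC(S_m,c)$, hence a known lattice), the lattice property of $[e,w]$ depends only on the \emph{balanced part} $[e,b]$. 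Thus the problem collapses to determining for which even numbers of balanced cycles, with prescribed absolute lengths $\mu(w)$, the interval below the balanced part is a lattice.

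Next I would handle the two directions separately. For the ``only if'' direction, I would show that if $\mu(w)$ is not of the allowed form, then $[e,w]$ contains a sublattice-obstruction forcing failure. The key input here is the type-$B$ result (Theorem \ref{th5}): the interval is a lattice in $\abs(B_n)$ precisely when $\mu(w)$ is a hook, and the partial result from \cite{N} that $\mu(w)=(2,2)$ already fails in $B_n$. Because $\abs(D_n)$ sits inside $\abs(B_n)$, one must be careful---an interval can behave differently in the subposet---so I would argue directly that a balanced part with two cycles each of length $\geq 2$, or with more than two balanced cycles not all of length one, produces two incomparable elements with no join or no meet, by exhibiting explicit paired-cycle refinements (using the covering relations (a)--(f)) that cannot be uniquely amalgamated. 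The form $\mu(w)=(1,1,1,1)$ must be shown to nonetheless \emph{be} a lattice despite having four balanced cycles, which is the genuinely $D$-specific phenomenon and corresponds exactly to the interval $I$ of Remark \ref{exd4} being ruled out---so I would verify that the relevant $D_4$ interval $[e,[1][2][3][4]]$, restricted to $D_n$, is in fact a lattice (contrasting with the non-Cohen-Macaulay but still lattice-or-not status examined there).

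For the ``if'' direction I would verify that the two allowed families genuinely give lattices. When $\mu(w)=(k,1)$ the balanced part consists of a single balanced $k$-cycle together with a single balanced $1$-cycle $[j]$; I would show $[e,b]$ is a lattice either by direct inspection or by identifying it with a known annular noncrossing-partition lattice $NC^B(k,1)$ from \cite{N,krat,N0}, whose lattice property is established there. When $\mu(w)=(1,1,1,1)$ the balanced part is $[a][b][c][d]$, and I would check the lattice property of this small interval by hand using the covering relations, confirming that every pair of elements has a unique join and meet.

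The hard part will be the ``only if'' direction for the borderline configurations inside the subposet $\abs(D_n)$, specifically ruling out $\mu(w)=(2,1,1)$, $\mu(w)=(1^6)$, and similar cases: here I must produce explicit pairs of elements of $[e,w]$ whose set of common upper bounds (or lower bounds) fails to have a least (greatest) element, and I must do this \emph{within} $D_n$, where the even-number-of-balanced-cycles constraint removes some candidate bounds that would exist in $B_n$. Managing this parity constraint---showing that removing the odd-balanced-cycle elements does not accidentally restore a unique join where none existed in $B_n$, while also not destroying joins in the genuinely-lattice cases---is the crux, and I expect it to require a careful case analysis keyed to the number of balanced cycles of length $\geq 2$ versus the number of singleton balanced cycles $[j]$.
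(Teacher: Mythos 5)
Your overall architecture---reduce to the balanced part via the direct product decomposition, import the non-hook obstruction, exhibit meet failures for the remaining hooks, and treat $(k,1)$ and $(1,1,1,1)$ separately---matches the paper's. But your proposed justification of the positive case $\mu(w)=(k,1)$ is invalid. The interval $[e,[1,\dots,k][k+1]]$ taken in $\abs(D_n)$ is a \emph{proper subposet} of the annular noncrossing partition poset $NC^B(k,1)$: it omits every element with exactly one balanced cycle, such as $[1,\dots,k]$ itself. The lattice property does not pass to subposets, and indeed the entire content of this theorem is that deleting the odd-parity elements can destroy meets (the hook $(1^6)$ gives a lattice in $\abs(B_6)$ by Theorem \ref{th5} but not in $\abs(D_6)$). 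So citing the lattice property of $NC^B(k,1)$ from \cite{N,krat,N0} proves nothing about the $D$-interval, and ``direct inspection'' is not available for general $k$. The correct input, which the paper uses, is that this interval is the noncrossing partition lattice $NC(D_{k+1},c)$ for a Coxeter element $c$ of $D_{k+1}$, a lattice by Bessis and Brady--Watt \cite{Be,brwtt}.

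The second issue is that the ``only if'' direction for hooks $(k,1^r)$ with $r>1$ is exactly where the proof lives, and you defer it to an anticipated ``careful case analysis'' without producing any witnesses. (Note also that $\mu(w)=(2,1,1)$ cannot occur for $w\in D_n$, since the number of balanced cycles must be even; the genuine cases are $(k,1,1,1)$ with $k\geq 2$, $(1^{r+1})$ with $r+1\geq 6$, etc.) You have correctly located the mechanism---the parity constraint deletes the would-be meet---but the argument is only complete once explicit pairs are written down, and they turn out to be very short: for $k\geq 2$ take $u=[a_1,a_2][a_3]$ and $v=[a_1,a_2][a_4]$, whose common lower bounds in $\abs(D_n)$ have the two maximal elements $\lleft a_1,a_2\rright$ and $\lleft a_1,-a_2\rright$ because $[a_1,a_2]$ is excluded by parity; for $k=1$ with at least five singleton balanced cycles take $u=[1][2][3][4]$ and $v=[1][2][3][5]$, whose common lower bounds have the three maximal elements $[1][2]$, $[1][3]$ and $[2][3]$. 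Until such pairs are exhibited (and the $(k,1)$ case is repaired as above), the characterization is not proved.
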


We note that in view of Lemma \ref{lemarm}, Theorems \ref{th5} and \ref{th6} characterize all closed intervals in $\abs(B_n)$ and $\abs(D_n)$ which are lattices. 
The following proposition provides one half of the first characterization.

\begin{propo}
\label{charlatt}
Let $w\in B_n$. If $\mu(w)$ is a hook partition, 
then the interval $[e,w]$ in $\abs(B_n)$ is a lattice.
\end{propo}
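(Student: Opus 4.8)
The plan is to prove that $[e,w]$ is a lattice whenever $\mu(w)$ is a hook partition by reducing to the two essential building blocks and then assembling them via the direct-product decomposition of Remark~\ref{anal}. First I would invoke Remark~\ref{anal}: writing $w=bp$ with $b$ the product of balanced cycles and $p=p_1\cdots p_k$ the product of paired cycles, we have $[e,w]\cong [e,b]\times[e,p_1]\times\cdots\times[e,p_k]$. A finite product of posets (each with a $\hat 0$) is a lattice if and only if every factor is a lattice, so it suffices to treat the factors separately. Each $[e,p_i]$ is the interval below a single paired cycle; by Remark~\ref{anal} applied inside $B_m$, or directly since a paired $m$-cycle is a Coxeter element of an embedded copy of $S_m$, this interval is isomorphic to the noncrossing partition lattice $NC(S_m)$, which is well known to be a lattice. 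The real content therefore lies entirely in the balanced factor $[e,b]$.

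Since $\mu(w)$ is a hook partition, $b$ consists of a single balanced cycle $[a_1,\dots,a_k]$ together with possibly several balanced $1$-cycles $[a_{k+1}],\dots,[a_{k+r}]$; equivalently $b$ is a maximal element of $\abs(B_{k+r})$ all of whose balanced parts except one have length one. The key step is to realize $[e,b]$ as a known lattice. The most direct route is to observe that $[e,b]$ is isomorphic to the poset $NC^B(k,1)$ of annular noncrossing partitions studied by Goulden--Nica--Oancea~\cite{N} (more precisely, to $NC^B(n-1,1)$ in the special case that will be taken up in Section~\ref{spespe}), and to quote the result of \cite[Section~6]{N} that this poset is a lattice. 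Concretely I would set up an explicit rank- and order-preserving bijection between the balanced cycles below $b$ and the annular noncrossing structures, check that it respects the covering relations (a)--(f) listed at the end of Section~\ref{relationbn}, and thereby transport the lattice structure.

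The step I expect to be the main obstacle is verifying the lattice property for $[e,b]$ directly, i.e.\ showing that any two balanced elements $x,y\mik b$ admit a meet (and, by local self-duality, a join). The difficulty is that balanced cycles interact through the covering relations of type (c), which split a paired cycle into two balanced cycles, so the combinatorics is genuinely richer than in the paired case; a naive "intersection of cycle data" need not land back inside the poset. The cleanest way to control this is to exploit the annular model: greatest lower bounds in $NC^B(k,1)$ are computed by intersecting the corresponding noncrossing diagrams on the annulus, and one checks this intersection is again admissible. I would therefore structure the argument so that the only nontrivial verification is the existence of meets for the single-hook balanced factor, deferring the enumeration and the explicit isomorphism details to the later sections (in particular Section~\ref{spespe}, where $[e,x]$ with $x=t_1\cdots t_n$ a product of balanced reflections is analyzed in depth), and conclude by the product decomposition that $[e,w]$ is a lattice.
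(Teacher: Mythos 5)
Your reduction to the balanced factor via Remark \ref{anal} is exactly the paper's first step, and disposing of the paired cycles through noncrossing partition lattices is fine. The gap is in the balanced factor. When $\mu(w)=(k,1^r)$ with $r\geq 2$, the element $b$ has $r+1$ balanced cycles, and the interval $[e,b]$ is \emph{not} isomorphic to an annular noncrossing partition poset $NC^B(p,q)$: those model elements with exactly \emph{two} balanced cycles (the two boundary circles of the annulus), so your proposed identification, and with it the plan of computing meets by intersecting annular diagrams, only covers $r\leq 1$. Moreover the lattice property cannot be imported from \cite{N} beyond that case: among intervals with two nontrivial balanced parts only $\mu=(k,1)$ gives a lattice, and the $(2,2)$ case fails (this is exactly what the paper uses, citing \cite{N0}, to prove the converse direction of Theorem \ref{th5}). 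So the actual content of the proposition --- that hooks $(k,1^r)$ with arbitrarily many trailing parts of size one still give lattices --- is left unproved. Deferring to Section \ref{spespe} does not help either: that section only computes enumerative invariants of $\mathcal{L}_n=L(0,n)$ and takes the lattice property as input from this very proposition, so your argument would be circular there.

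What the paper does for $r\geq 2$ is an induction on $k+r$, with base cases $r\leq 1$ supplied by Reiner's $NC^B(k)$ and the $(k,1)$ result of \cite{N}. Given $u,v\mik w$, it locates an index $i$ that can be deleted from $u$ and/or $v$ without changing $[e,u]\cap[e,v]$ --- either because one of $u,v$ fixes $i$, or because, when no cycle of $u$ is comparable to a cycle of $v$, the index $i$ sits in one of three explicitly listed incompatible configurations --- and then $[e,u]\cap[e,v]=[e,u']\cap[e,v']$ inside a smaller hook interval $L(\kappa_1,\rho_1)$, which is a lattice by induction, so the meet exists by \cite[Proposition 3.3.1]{St}. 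Some such combinatorial deletion argument (or another genuinely new idea for handling three or more balanced cycles) is needed to close your gap.
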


\begin{proof}
Let us write $w=bp$, where $b$ (respectively,  $p$) is the product of all balanced  (respectively, paired) cycles of $w$.  
We recall then that $[e,w]\cong[e,b]\times[e,p]$ (see Remark \ref{anal}). Since $[e,p]$ is isomorphic to a direct product of noncrossing partition lattices, 
the interval $[e,w]$ is a lattice if and only if $[e,b]$ is a lattice. 
Thus we may assume that $w$ is a product of disjoint balanced cycles. 
Since $\mu(w)$ is a hook partition, we may further assume that $w=[1,2,\dots,k][k+1]\cdots[k+r]$  with $k+r\leq n$. 
We will show that  $L(k,r):=[e,w]$ is a lattice by 
induction on $k+r$. The result is trivial for $k+r=2$. 
Suppose that  $k+r\geq 3$ and that the poset $L(k,r)$ is a lattice whenever $k+r< \kappa+\rho\leq n$.
We will show that $L(\kappa, \rho)$ is a lattice as well. 
For $\rho\leq 1$, this follows from  \cite[Proposition 2]{R} and the result of \cite{N} mentioned earlier.  
Thus we may assume that $\rho\geq 2$. Let $u,v\in L(\kappa,\rho)$. 
By \cite[Proposition 3.3.1]{St}, it suffices to show that $[e, u]\cap [e, v]=[e, z]$ for some $z\in L(\kappa, \rho)$.

Suppose first that $u(i)=i$ for some $i\in\{1,2,\dots, \kappa + \rho\}$ 
and let $v'$ be the signed permutation obtained by deleting the element $i$ from 
the cycle decomposition of $v$. 
We may assume that $u,v'\in L(\kappa_1,\rho_1)$,
where either $\kappa_1= \kappa-1$ and $\rho_1= \rho$, or $\kappa_1= \kappa $ and $\rho_1= \rho-1$.
We observe that $[e, u]\cap[e, v]=[e, u]\cap[e,v']$.
Since  $L(\kappa_1,\rho_1)$ is a lattice by induction, there exists an element
$z\in L(\kappa_1,\rho_1)$ such that $[e, u]\cap[e, v']=[e, z]$.
We argue in a similar way if $v(i)=i$ for some	$i\in\{1,2,\dots, \kappa + \rho\}$.

Suppose that $u(i)\neq i$ and $v(i)\neq i$ for every $i\in\{1,2,\dots, \kappa + \rho\}$. 
Since $\rho\geq 2$, each of $u,v$ has at least  one reflection in its cycle decomposition. 
Without loss of generality, we may assume that
no cycle of $u$ is comparable to a cycle of $v$ in $\abs(B_n)$ (otherwise the result follows by induction).
Then at least one of the following holds:

\begin{itemize}[leftmargin=*, itemsep=5pt]
\item The reflection $[i]$ is a cycle of $u$ or $v$ for some $i\in\{\kappa +1, \kappa +2,\dots, \kappa + \rho\}$.
\item There exist $i,j\in\{\kappa +1, \kappa +2,\dots, \kappa + \rho\}$ with $i< j$, such that either
$\lleft i,j\rright$ or $\lleft i,-j\rright$ is a cycle of $u$ and  $i$ and $j$ belong to distinct cycles of $v$, or conversely.
\item There exist $i,j\in\{\kappa +1, \kappa +2,\dots, \kappa + \rho\}$ with $i< j$, such that $\lleft i,j\rright$ is a cycle of $u$
and $\lleft i,-j\rright$ is a cycle of $v$, or conversely.
\end{itemize}
In any of the previous cases, let $u'$ and $v'$ be the permutations obtained from
$u$ and $v$, respectively, by deleting the element $i$ from their cycle decomposition.
We may assume once again that $u',v'\in L(\kappa_1,\rho_1)$,
where either $\kappa_1= \kappa-1$ and $\rho_1=\rho$, or $\kappa_1= \kappa $ and $\rho_1=\rho-1$.
As before, $[e, u]\cap[e, v]=[e,u']\cap[e, v']$.
By the induction hypothesis, $L(\kappa_1,\rho_1)$ is a lattice and hence
$[e,u']\cap[e,v']=[e,z]$
for some $z\in L(\kappa_1,\rho_1)$.
This implies that $L(\kappa,\rho)$ is a lattice and completes the induction. 
\end{proof}


\noindent\emph{Proof of Theorem \ref{th5}}. 
If $\mu(w)$ is a hook partition, then the result follows from Proposition \ref{charlatt}.  
To prove the converse, assume that $w$ has at least two balanced cycles, say $w_1$ and $w_2$, with 
$\ell_{\mathcal{T}}(w_1),\ell_{\mathcal{T}}(w_2)\geq 2$.
Then there exist $i,j,l,m\in \{\pm1,\pm2,\dots,\pm n\}$ with $|i|,|j|,|l|,|m|$ pairwise distinct, such that
$[i,j]\mik w_1$ and $[l,m]\mik w_2$.
However, in \cite[Section 5]{N0} it was shown that the poset $\left[e,[i,j][l,m]\right]$ is not a lattice.
It follows that neither $[e,w]$ is a lattice. This completes the proof.
\qed

\

In the sequel we denote by $L(k,r)$ the lattice $[e,w]\subset\abs(B_n)$, where 
$w=[1,2,\dots,k][k+1]\cdots [k+r]\in B_n$. 
Clearly, $L(k,r)$ is isomorphic to any interval of the form $[e,u]$, where $u\in B_n$ has no paired cycles and satisfies $\mu(u)=(k,1^r)$.

\

\noindent\emph{Proof of Theorem \ref{th6}}. 
The argument in the proof of Theorem \ref{th5}  shows that
the interval $[e,w]$ is not a lattice unless $\mu(w)$ is a hook partition. 
Moreover, it is known \cite{Be,brwtt} that $[e, w]$
is a lattice if $\mu(w)=(k,1)$ for some $k \ge 1$. 
Suppose that $\mu(w)=(k,1^r)$, where $r>1$ and $r+k\leq n$. 
If $k \ge 2$, then there exist distinct elements 
of $[e,w]$ of the form $u=[a_1,a_2][a_3]$ and $v=[a_1,a_2][a_4]$. 
The intersection $[e, u]\cap [e, v]\subset \abs(D_n)$ has two maximal
elements, namely the paired reflections $\lleft a_1,a_2\rright$ and $\lleft a_1,-a_2\rright$. 
This implies that $u$ and $v$ do not have a meet and therefore
the interval $[e, w]$ is not a lattice. 
Suppose that $k=1$. 
Without
loss of generality, we may assume that $[1][2]\cdots[r+1]\mik w$. Suppose that $r+1\geq 5$.
We consider the elements $u=[1][2][3][4]$ and $v=[1][2][3][5]$ of $[e,w]$
and note that the intersection $[e, u]\cap[e, v]$ has three maximal
elements, namely $[1][2],[1][3]$ and $[2][3]$. This implies that the interval $[e,w]$ is not a lattice. 
Finally, if $r+1=4$, then $\mu(w)=(1,1,1,1)$ and $[e,w]=[e,[1][2][3][4]]\times[e,p]$, where $p$ is a product 
of disjoint paired cycles which fixes each $i\in\{1,2,3,4\}$. 
Figure \ref{d4b} shows that the interval
$[e,[1][2][3][4]]$ is a lattice and hence, so is $[e, w]$. This
completes the proof.
\qed


\section{The lattice $\mathcal{L}_n$}
\label{spespe}
The poset $L(n,0)$ is the interval $[e,c]$ of $\abs(B_n)$, where $c$ is the Coxeter element $[1,2,\dots,n]$ of $B_n$.
This poset is isomorphic to the lattice $NC^B(n)$ of noncrossing partitions of type $B$.
Reiner \cite{R} computed its basic enumerative invariants listed below:

\begin{itemize}[leftmargin=*, itemsep=5pt]

\item The cardinality of $NC^B(n)$ is equal to ${2n\choose n}$.
\item The number of elements of rank $k$ is equal to ${n\choose k}^2$.
\item The zeta polynomial satisfies $Z(NC^B(n),m)={mn\choose n}$.
\item The number of maximal chains is equal to $n^n$.
\item The M\"obius function satisfies $\mu_n(\hat{0},\hat{1})=(-1)^n {2n-1\choose n}$.
\end{itemize}

\

In this section we focus on the enumerative properties of another interesting special case of
$L(k,r)$, namely the lattice $\mathcal{L}_n:=L(0,n)$. 
First we describe this poset explicitly. 
Each element of $\mathcal{L}_n$ can be obtained from $[1][2]\cdots[n]$ by applying repeatedly the following steps:

\begin{itemize}
\item delete some $[i]$,
\item replace a product $[i][j]$ with $\lleft i,j\rright$ or $\lleft i,-j\rright$.
\end{itemize}
Thus $w\in \mathcal{L}_n$ if and only if every nontrivial cycle of $w$ is a reflection. 
In that case there is a poset isomorphism $[e,w]\cong\mathcal{L}_k\times \mathcal{B}_{l}$, 
where $k$ and $l$ are the numbers of balanced and paired cycles of $w$, respectively and $\mathcal{B}_l$ denotes the lattice of subsets of the set 
$\{1,2,\dots,l\}$, ordered by inclusion. 
It is worth pointing out that $\mathcal{L}_n$ coincides with the subposet
of $\abs(B_n)$ induced on the set of involutions. 
Figure \ref{el3} illustrates the Hasse diagram of
$\mathcal{L}_3$.

\begin{figure}[h]
\begin{center}
\includegraphics[width=5.2in]{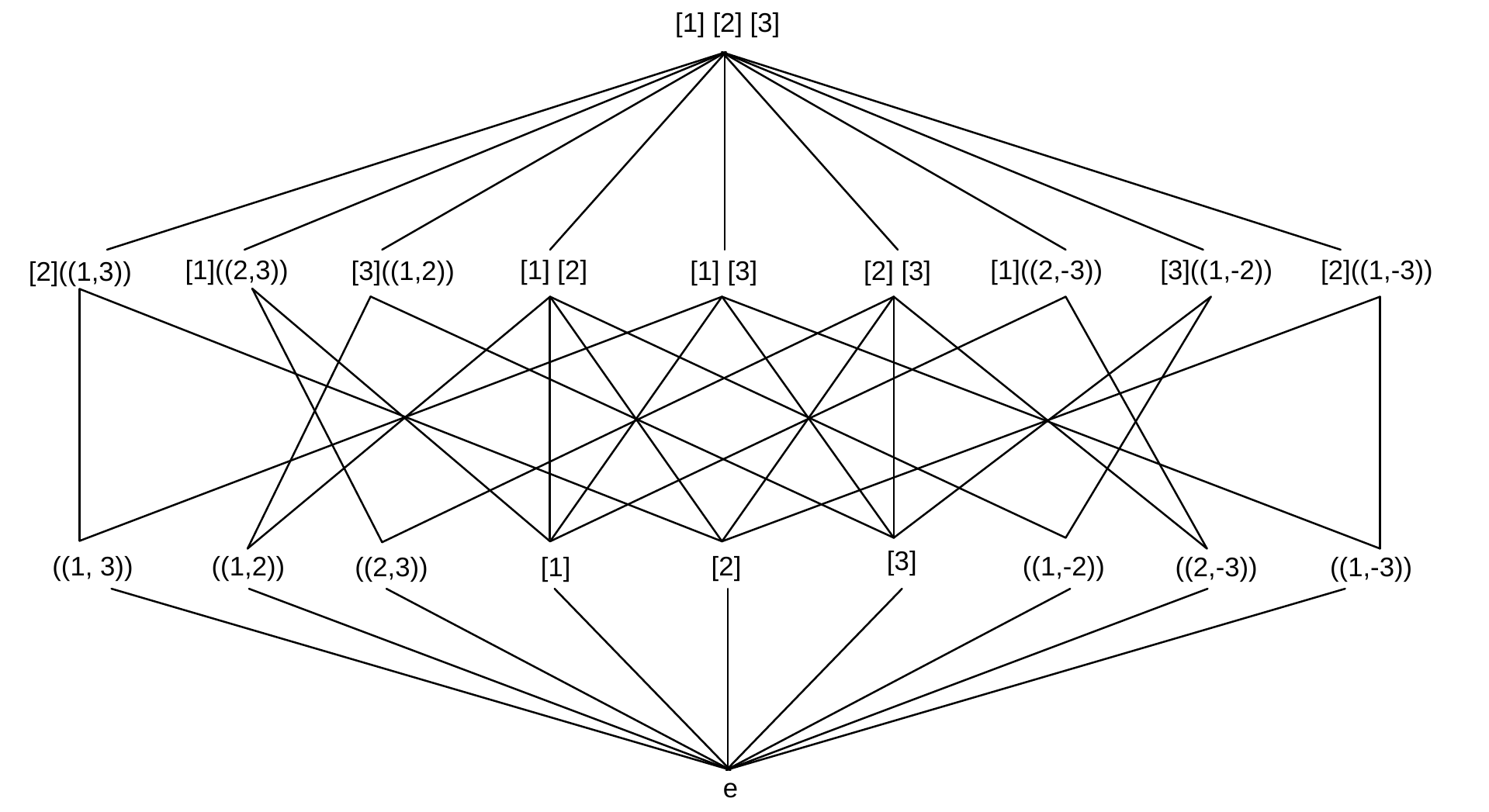}
\end{center}
\os
\label{el3}
\end{figure}

In Proposition \ref{k=0} we give the analogue of the previous list for the lattice $\mathcal{L}_n$.
We recall that the zeta polynomial $Z(P,m)$ of a finite poset $P$ counts the number of multichains	 $x_1\leq x_2\leq\cdots\leq x_{m-1}$ of $P$.
It is known	 (see  \cite{Ed}, \cite[Proposition 3.11.1]{St}) that $Z(P,m)$ is a polynomial function of $m$ of degree $n$, where $n$ is the length of $P$ and that $Z(P,2)=\#P$.
Moreover, the leading coefficient of $Z(P,m)$ is equal to the number of maximal chains divided by $n!$ and if $P$ is bounded, then  $Z(P,-1)=\mu(\hat{0},\hat{1})$.

\begin{propo}
\label{k=0}
\emph{For the lattice $\mathcal{L}_n$ the following hold:}
\begin{enumerate}[leftmargin=*, itemsep=5pt]

\item  [\emph{(i)}]
The number of elements of $\mathcal{L}_n$ is equal to
\[\sum\limits_{k=0}\limits^{\left\lfloor \nicefrac{n}{2}\right\rfloor} {n\choose 2k}\,2^{n-k}(2k-1)!!,\]
where $(2m-1)!!=1\cdot3\,\cdots\, (2m-1)$ for positive integers $m$.

\item  [\emph{(ii)}] The number of elements of $\mathcal{L}_n$ of rank $r$ is equal to

\end{enumerate}
\[\dsp\sum\limits_{k=0}\limits^{\min\{r,n-r\}}\frac{n!}{k!(r-k)!(n-r-k)!}.\]

\begin{enumerate}[leftmargin=*, itemsep=5pt]
\item  [\emph{(iii)}]
The zeta polynomial $Z_n$ of $\mathcal{L}_n$ is given by the formula
\[Z_n(m)=\dsp\sum\limits_{k=0}\limits^{\left\lfloor \nicefrac{n}{2}\right\rfloor} {n\choose 2k}\,m^{n-k}(m-1)^k(2k-1)!!.\]
\item  [\emph{(iv)}]
The number of maximal chains of $\mathcal{L}_n$ is equal to\
\[n!\sum\limits_{k=0}\limits^{\left\lfloor \nicefrac{n}{2}\right\rfloor} {n\choose 2k}(2k-1)!!.\]
\item  [\emph{(v)}]
For the M\"obius function $\mu_n$ of $\mathcal{L}_n$ we have
\[\dsp\mu_n(\hat{0},\hat{1})=(-1)^n\sum\limits_{k=0}\limits^{\left\lfloor \nicefrac{n}{2}\right\rfloor} {n\choose 2k}\,2^k(2k-1)!!,\]
where $\hat{0}$ and $\hat{1}$ denotes the minimum and the maximum element of $\mathcal{L}_n$, respectively.
\end{enumerate}
\end{propo}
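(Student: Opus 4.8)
The plan is to describe $\mathcal{L}_n$ combinatorially and then extract all five invariants from a single structural observation, namely that an element $w \in \mathcal{L}_n$ is determined by its nontrivial cycles, each of which is a reflection. Concretely, $w$ is specified by choosing a subset $S \subseteq \{1,2,\dots,n\}$ of \emph{active} indices together with a partial matching on $S$ (the indices paired into paired $2$-cycles $\lleft i,j\rright$ or $\lleft i,-j\rright$, carrying a sign), the unmatched active indices forming balanced $1$-cycles $[i]$, and the inactive indices being fixed points. If $k$ denotes the number of paired $2$-cycles used, then there are $\binom{n}{2k}$ ways to select the $2k$ indices that get matched, $(2k-1)!!$ perfect matchings on them, and a factor $2$ for each paired cycle (the choice of $\lleft i,j\rright$ versus $\lleq i,-j\rright$); the remaining indices each independently may or may not carry a balanced $1$-cycle. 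This classification is what generates the binomial-and-double-factorial shape of every formula.

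For part (i) I would sum over $k$: after fixing the matched set and the matching, each of the remaining $n-2k$ indices independently is either a fixed point or a balanced $1$-cycle (a factor $2^{n-2k}$), and each of the $k$ pairs carries a sign (a factor $2^k$), giving $\binom{n}{2k}\,2^{n-2k}\cdot 2^k (2k-1)!! = \binom{n}{2k}\,2^{n-k}(2k-1)!!$, as claimed. For part (ii), I note that the rank of $w$ is the number of nontrivial cycles plus (for each balanced cycle) its extra length, but since every nontrivial cycle here is a reflection, $\ell_{\mathcal{T}}(w)$ equals the total number of nontrivial cycles. Writing $r = k + j$ where $k$ is the number of paired $2$-cycles and $j$ the number of balanced $1$-cycles, I count $\frac{n!}{k!\,(r-k)!\,(n-r-k)!}$ by distributing the $n$ indices into $k$ paired cycles ($2k$ indices, with the $(2k-1)!!\cdot 2^k$ internal structure absorbed into a multinomial identity), $j=r-k$ singletons, and $n-r-k$ fixed points; summing over the admissible range $0 \le k \le \min\{r,n-r\}$ yields the stated formula.

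For parts (iii), (iv) and (v) the plan is to use the general facts about the zeta polynomial recalled just before the proposition rather than recompute from scratch. Since $\mathcal{L}_n \cong \mathcal{L}_k \times \mathcal{B}_\ell$ on each principal interval, the multichain count factors, and I would directly count multichains $x_1 \le \cdots \le x_{m-1}$ in $\mathcal{L}_n$: fixing the $k$ paired cycles of the top element $x_{m-1}$ contributes $\binom{n}{2k}(2k-1)!!\,m^{?}$-type weights, where each paired cycle independently follows an $\mathcal{L}_1$-style or Boolean chain (contributing the factor $(m-1)$) and each potential balanced index contributes a factor $m$. Collecting terms gives $Z_n(m) = \sum_k \binom{n}{2k} m^{n-k}(m-1)^k (2k-1)!!$. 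Then (iv) follows by reading off the leading coefficient: $Z_n(m)$ has degree $n$ and the number of maximal chains is $n!$ times that leading coefficient, which is $\sum_k \binom{n}{2k}(2k-1)!!$ after extracting the $m^n$ coefficient; and (v) follows from $\mu_n(\hat 0,\hat 1) = Z_n(-1)$, giving $(-1)^n \sum_k \binom{n}{2k} 2^k (2k-1)!!$ once the signs $m^{n-k}(m-1)^k$ are evaluated at $m=-1$.

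The main obstacle I expect is part (iii): getting the exponents on $m$ and $(m-1)$ exactly right requires care about what a multichain looks like on each type of cycle. The clean way to avoid an error is to prove the zeta polynomial formula first, entirely structurally via the product decomposition $[\hat 0, x] \cong \mathcal{L}_k \times \mathcal{B}_\ell$, and then derive (i), (iv) and (v) as the specializations $Z_n(2) = \#\mathcal{L}_n$, the leading coefficient, and $Z_n(-1) = \mu_n(\hat 0,\hat 1)$, so that only one genuine computation is needed and the remaining parts are consistency checks. Part (ii), being a rank-level count rather than a zeta evaluation, I would prove independently by the multinomial argument above.
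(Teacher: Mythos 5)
Your counts for (i) and (ii) are exactly the paper's (the paper handles the range in (ii) by proving it for $r\le n/2$ and invoking self-duality, while you bound $k\le\min\{r,n-r\}$ directly; both are fine), and your derivations of (iv) and (v) from (iii) via the leading coefficient and $Z_n(-1)$ are also the paper's. The problem is part (iii), which you correctly identify as the delicate step but for which your proposed organizing principle does not work as stated. You propose to condition on ``the $k$ paired cycles of the top element $x_{m-1}$.'' This misses multichains in which a paired reflection appears at an intermediate level but is absent from the top: e.g.\ $\lleft 1,2\rright \mik [1][2]$ is a multichain whose top element has no paired cycles at all, since $[i][j]$ covers both $\lleft i,j\rright$ and $\lleft i,-j\rright$. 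The paper instead conditions on the set of $k$ paired reflections appearing \emph{anywhere} in the multichain. This is where the factor $m(m-1)$ per pair comes from: for a pair $\{i,j\}$ carrying a paired reflection $t$ somewhere in the chain, the relevant history lives in the three-element chain $e < t < [i][j]$, and the number of multichains $y_1\mik\cdots\mik y_{m-1}$ in it that actually pass through $t$ is $\binom{m}{2}$, which times the $2$ sign choices for $t$ gives $m(m-1)$; each unpaired index contributes $m$, yielding $\binom{n}{2k}(2k-1)!!\,(m(m-1))^k m^{n-2k}=\binom{n}{2k}(2k-1)!!\,m^{n-k}(m-1)^k$. Your count, conditioning on the top element, would assign each paired cycle of the top only $2(m-1)$ histories and would undercount the balanced part.

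Your suggested fallback --- prove (iii) ``entirely structurally via the product decomposition $[\hat 0,x]\cong\mathcal{L}_s\times\mathcal{B}_k$'' --- does not escape this either: summing $Z([\hat 0,x],m-1)$ over top elements $x$ produces $\sum\,(\#\text{tops with parameters }k,s)\cdot(m-1)^k\,Z(\mathcal{L}_s,m-1)$, a genuine recursion in $n$ involving $Z(\mathcal{L}_s,m-1)$ for $s$ up to $n$, not a closed form in one step. So the single computation you hoped to reduce everything to is precisely the one your sketch gets wrong. The fix is simply to adopt the paper's bookkeeping (classify multichains by which paired reflections ever appear); everything downstream of (iii) in your proposal then goes through verbatim.
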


\begin{proof}
Suppose that $x$ has $k$ paired reflections.
These can be chosen in $2^k{n\choose 2k}(2k-1)!!$ ways.
On the other hand, the balanced reflections of $w$ can be chosen in $2^{n-2k}$ ways. Therefore the cardinality of $\mathcal{L}_n$ is equal to
\[\sum\limits_{k=0}\limits^{\left\lfloor \nicefrac{n}{2}\right\rfloor} {n\choose 2k}\,2^{n-k}(2k-1)!!.\]
The same argument shows that the number of elements of $\mathcal{L}_n$ of rank $r$, where $r\leq \left\lfloor \nicefrac{n}{2}\right\rfloor$ is equal to

\begin{eqnarray*}
\sum\limits_{k=0}\limits^r2^k{n\choose 2k}(2k-1)!!\,{n-2k\choose r-k}&=&\sum\limits_{k=0}\limits^r2^k{n\choose 2k}\frac{(2k)!}{2^k\,k!}\,{n-2k\choose r-k}\\
&=&\sum\limits_{k=0}\limits^r\frac{n!}{k!(r-k)!(n-r-k)!}.
\end{eqnarray*}
Since $\mathcal{L}_n$ is self dual, the number of elements in $\mathcal{L}_n$ of rank
$r$ is equal to the number of those that have rank $n-r$.
The number of multichains in $\mathcal{L}_n$ in which	 $k$ distinct paired reflections appear, is equal to ${n\choose 2k}(2k-1)!!(m(m-1))^k m^{n-2k}$. Therefore, the zeta polynomial of $\mathcal{L}_n$ is given by
\[Z_n(m)=\sum\limits_{k=0}\limits^{\left\lfloor \nicefrac{n}{2}\right\rfloor} {n\choose 2k}(2k-1)!!\,m^{n-k}(m-1)^k.\]
Finally, computing the coefficient of $m^n$ in this expression for $Z_n(m)$ and multiplying by $n!$ we conclude that
the number of maximal chains of $\mathcal{L}_n$ is equal to
\[n!\sum\limits_{k=0}\limits^{\left\lfloor \nicefrac{n}{2}\right\rfloor} {n\choose 2k}(2k-1)!!\]
and setting $m=-1$ we get
\[\mu_n(\hat{0},\hat{1})=Z_n(-1)=(-1)^n\sum\limits_{k=0}\limits^{\left\lfloor \nicefrac{n}{2}\right\rfloor} {n\choose 2k}(2k-1)!!\,2^k.\]
\end{proof}

\begin{remark}

\noindent By Proposition \ref{el}, the lattice $\mathcal{L}_n$ is EL-shellable.
We describe two more EL-labelings for $\mathcal{L}_n$.
\begin{enumerate}[leftmargin=*, itemsep=5pt]
\item [(i)] Let $\Lambda=\{[i]: i=1,2,\dots,n\}\cup\{\lleft i,j\rright:i,j=1,2,\dots,n,\,i<j\}$.
We linearly order the elements of $\Lambda$ in the following way. We first order the balanced reflections
so that $[i]<_{\Lambda}[j]$ if and only if $i<j$. Then we order the paired reflections lexicographically.
Finally, we define $[n]<_{\Lambda}\lleft1,2\rright$.
The map $\lambda_1:C(B_n)\to \Lambda$ defined as:
\[\lambda_1(a,b)=\left\{\begin{array}{ll}

[i] & \mbox{if $a^{-1}b=[i]$}, \\
\lleft i,j\rright & \mbox{if $a^{-1}b=\lleft i,j\rright$ or $\lleft i,-j\rright$}

\end{array}
\right.\]
is an EL-labeling for $\mathcal{L}_n$.
\end{enumerate}
\begin{enumerate}[leftmargin=*, itemsep=5pt]
\item [(ii)] Let $\mathcal{T}$ be the set of reflections of $B_n$.
We define a total order $<_{\mathcal{T}}$ on $\mathcal{T}$
which extends the order $<_{\Lambda}$, by ordering the reflections
$\lleft i,-j\rright$, for $1\leq i<j\leq n$, lexicographically and letting
$\lleft n-1,n\rright<_{\mathcal{T}}\lleft1,-2\rright$.
For example, if $n=3$ we have the order
$[1]_{\mathcal{T}}<_{\mathcal{T}}[2]<_{\mathcal{T}}[3]<_{\mathcal{T}}\lleft1,2\rright<_{\mathcal{T}}\lleft1,3\rright<_{\mathcal{T}}\lleft2,3\rright<_{\mathcal{T}}\lleft1,-2\rright<_{\mathcal{T}}\lleft1,-3\rright<_{\mathcal{T}}\lleft2,-3\rright$.
Let $t_i$ be the $i$-th reflection in the order above. We define a map $\lambda_2:C(B_n)\to \{1,2,\dots,n^2\}$ as:
\end{enumerate}
\[\lambda_2(a,b)=\min\limits_{1\leq i\leq n^2}\{i:\,t_i\vee a=b\}.\]
\begin{enumerate}[leftmargin=*, itemsep=5pt]
\item []The map $\lambda_2$ is an EL-labeling for $\mathcal{L}_n$.
\end{enumerate}
See Figure \ref{elp(0,2)n} for an example of these two EL-labelings when $n=2$.
\end{remark}

\begin{figure}[h]
\begin{center}
\includegraphics[width=4.5in]{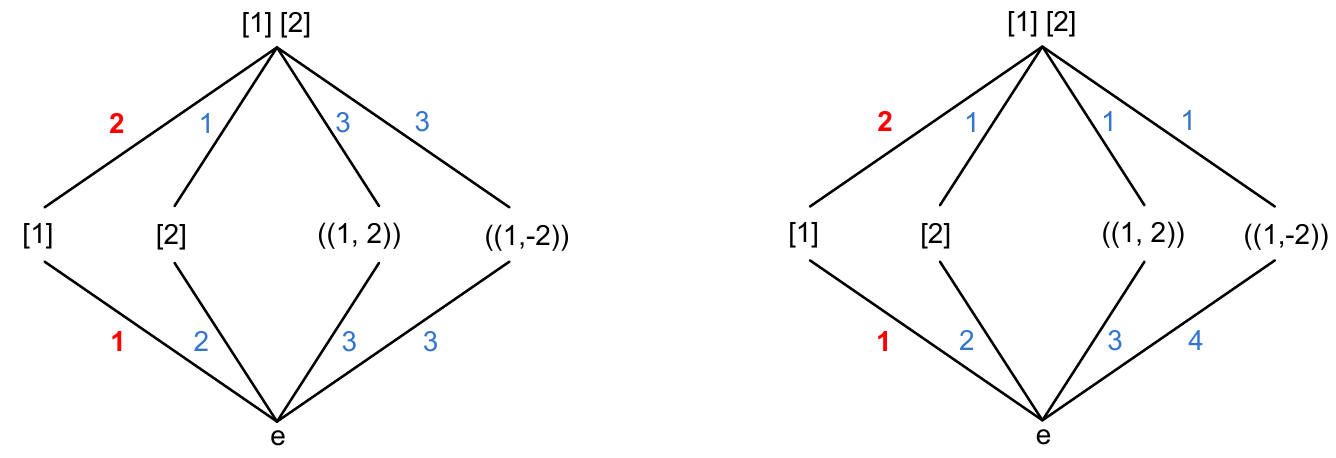}
\end{center}
\os
\label{elp(0,2)n}
\end{figure}


\section{Enumerative combinatorics of $L(k,r)$}
\label{telostelos}
In this section we compute the cardinality, zeta polynomial and M\"obius function  of the lattice $L(k,r)$,
where $k,r$ are nonnegative integers with $k+r=n$.
The case $k=n-1$ was treated by Goulden, Nica and Oancea in their work \cite{N} on the posets of annular noncrossing partitions; 
see also \cite{KM,N0} for related work. We will use their results, as well as the formulas for cardinality and zeta polynomial
for $NC^B(n)$ and Proposition \ref{k=0},
to find the corresponding formulas for $L(k,r)$.


\begin{propo}
\label{k,r}
Let $\al_r=|\mathcal{L}_r|,\,\beta_r(m)=Z(\mathcal{L}_r,m)$
and $\mu_r=\mu_r(\mathcal{L}_r)$, where $\al_r=\beta_r(m)=\mu_r=1$ for $r=0,1$.
For fixed nonnegative integers $k,r$ such that $k+r=n$, the cardinality, zeta polynomial and M\"obius function of $L(k,r)$
are given by:
\begin{itemize}[leftmargin=*, itemsep=5pt]
\item
$\#L(k,r)=\dsp {2k\choose k}\left(\frac{2\,r\,k}{k+1}\,\al_{r-1}+a_r\right)$.

\item
$Z(L(k,r),m)=\dsp{mk\choose k}\left( \frac{2\,r\,k}{k+1}(m-1)\,\beta_{r-1}(m)+\beta_r(m)\right)$.

\item $\mu(L(k,r))=\dsp (-1)^n{2k-1\choose k}\left(\frac{4\,r\,k}{k+1}\,|\mu_{r-1}| + |\mu_r|\right)$.
\end{itemize}
\end{propo}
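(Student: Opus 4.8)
The plan is to reduce all three formulas to a single computation of the zeta polynomial $Z(L(k,r),m)$, and then recover the cardinality and the M\"obius function by specialization, using that $Z(P,2)=\#P$ and $Z(P,-1)=\mu(\hat{0},\hat{1})$ for a bounded poset $P$ (recall that $L(k,r)=[e,w]$ is a bounded interval and, by Proposition \ref{charlatt}, a lattice). The external inputs I would combine are Reiner's formula $Z(NC^B(k),m)=\binom{mk}{k}$ for $L(k,0)=NC^B(k)$, the zeta polynomial $\beta_r(m)=Z(\mathcal{L}_r,m)$ of $L(0,r)=\mathcal{L}_r$ from Proposition \ref{k=0}, and the enumerative data of Goulden, Nica and Oancea \cite{N} for the annular poset $L(k,1)=NC^B(k,1)$.

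The heart of the argument is a structural decomposition of $L(k,r)$ reflecting how the ``small'' singleton cycles $[k+1],\dots,[k+r]$ of $w=[1,\dots,k][k+1]\cdots[k+r]$ interact with the ``big'' balanced cycle $[1,\dots,k]$. I would say that $k+i$ is \emph{attached} in $x\mik w$ if it lies in a cycle of $x$ that also contains an element of $\{1,\dots,k\}$. The key claim is that in every $x\in L(k,r)$ at most one of $k+1,\dots,k+r$ is attached. Granting this, each $x$ is determined by its restriction to the big part together with the unique attached small element (if any), and its restriction to the remaining small elements; since the relevant cycles are disjoint, Remark \ref{anal} shows these two pieces vary independently. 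Thus $L(k,r)$ splits, as a set, into the order ideal $NC^B(k)\times\mathcal{L}_r$ (nothing attached) and, for each of the $r$ choices of the attached element $k+i$, the ``attached part'' of $NC^B(k,1)$ on $\{1,\dots,k,k+i\}$ times $\mathcal{L}_{r-1}$ on the remaining smalls.

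With the decomposition in hand I would count multichains, the crucial observation being that attachment is monotone along $\mik$: if $k+i$ is attached in $x$ and $x\mik y$, it is attached in $y$. Hence a multichain $x_1\mik\cdots\mik x_{m-1}$ is governed by the (at most one) small element attached in its top $x_{m-1}$. Multichains with nothing attached are exactly those of $NC^B(k)\times\mathcal{L}_r$, contributing $\binom{mk}{k}\beta_r(m)$; those whose top attaches $k+i$ factor, by independence, as a multichain of $NC^B(k,1)$ with attached top times a multichain of $\mathcal{L}_{r-1}$, where the first factor is $Z(NC^B(k,1),m)-\binom{mk}{k}\beta_1(m)=\binom{mk}{k}\frac{2k}{k+1}(m-1)$ from the Goulden--Nica--Oancea formula. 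Summing over the $r$ choices of $i$ gives
\[Z(L(k,r),m)=\binom{mk}{k}\left(\frac{2rk}{k+1}(m-1)\,\beta_{r-1}(m)+\beta_r(m)\right).\]
Putting $m=2$ yields $\#L(k,r)$ via $\beta_j(2)=\al_j$, while putting $m=-1$ yields $\mu(L(k,r))$ after simplifying $\binom{-k}{k}=(-1)^k\binom{2k-1}{k}$ and $\beta_j(-1)=\mu_j=(-1)^j|\mu_j|$: the two signs combine to $(-1)^{n}$ and the factor $(m-1)=-2$ becomes $+4$ once $(-1)^r$ is pulled out, producing the third formula. The degenerate ranges $r\le 1$ are handled by the stated boundary conventions.

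The main obstacle is the structural claim that at most one small element is attached. I would prove it through the noncrossing characterization of $\mik$ on $B_n$: reading the covering relations at the end of Section \ref{relationbn} backwards, attaching $k+i$ to the big block requires a ``bridge'' (a paired or balanced cycle joining $k+i$ to $\{1,\dots,k\}$), and two bridges to distinct singletons $k+i,k+j$ form a transverse (genus-increasing) configuration that breaks additivity, so that $\ell_{\mathcal{T}}(x)+\ell_{\mathcal{T}}(x^{-1}w)>\ell_{\mathcal{T}}(w)$ and $x\not\mik w$; concretely, two crossing paired bridges cause $x^{-1}w$ to acquire an overlong balanced cycle, as one checks directly for $w=[1,2][3][4]$. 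This is precisely the genus-zero feature of hook elements guaranteed by Theorem \ref{th5}. The second, more technical point to verify is that the Goulden--Nica--Oancea data indeed isolates the attached part of $NC^B(k,1)$ as $\binom{mk}{k}\frac{2k}{k+1}(m-1)$, which amounts to subtracting the unattached contribution $\binom{mk}{k}\beta_1(m)$ from their zeta polynomial and confirming the result at $m=2$ and $m=-1$.
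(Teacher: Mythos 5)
Your proposal is correct and follows essentially the same route as the paper: the same partition of $L(k,r)$ into the ``unattached'' ideal isomorphic to $NC^B(k)\times\mathcal{L}_r$ and the $r$ pieces isomorphic to (the attached part of $NC^B(k,1)$)$\times\mathcal{L}_{r-1}$, the same count of multichains according to where the top element lies, and the same use of the Goulden--Nica--Oancea data (your quantity $\binom{mk}{k}\frac{2k}{k+1}(m-1)$ is exactly their $2\binom{mk}{k+1}$). The only cosmetic difference is that you recover the cardinality by setting $m=2$ in the zeta polynomial, whereas the paper counts it directly.
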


\begin{proof}


We denote by $A$ the subset of $L(k,r)$ which consists of the elements $x$ with the following property:
every cycle of $x$ that contains at least one of $\pm1,\pm2,\dots,\pm k$ is less than or equal to
the element $[1,2,\dots,k]$ in $\abs(B_n)$.
Let $x=x_1x_2\cdots x_{\nu}\in A$, written as a product of disjoint cycles.
Without loss of generality, we may assume that there is a
$\,t\in\{0,1,\dots,\nu\}$ such that $x_1x_2\cdots x_t\mik [1,2,\dots,k]$ and
$x_{t+1}x_{t+2}\cdots x_{\nu}\mik[k+1][k+2]\cdots[k+r]$.
Observe that if $t=0$ then $x\mik[k+1][k+2]\cdots[k+r]$ in $\abs(B_n)$,
while if $t=\nu$ then $x\mik[1,2,\dots, k]$.
Clearly, there exists a poset isomorphism
\begin{alignat*}{3}
f: A & \to NC^B(k) \ \ && \times && \ \ \langle[k+1]\cdots[k+r]\rangle \\
   x & \mapsto (x_1\cdots x_t \ \ && \	, && \ \ \,x_{t+1}\cdots x_{\nu}),
\end{alignat*}


\noindent so that
\begin{equation}
\label{A}
A\cong NC^B(k)\times \mathcal{L}_r.
\end{equation}

Let $C=L(k,r)\sm A$ and $x=x_1x_2\cdots x_{\nu}\in C$, written as a product of disjoint cycles.
Then there is a exactly one paired cycle $x_1$ of $x$ and one reflection $\lleft i, l\rright$ with
$i\in\{\pm 1,\pm2,\dots, \pm k\},\ l\in\{k+1,k+2,\dots,k+r\}$, such that $\lleft i, l\rright\mik x_1$.
For every $l\in \{k+1,k+2,\dots,k+r\}$ denote by $C_l$ the set of permutations
$x\in L(k,r)$ which have a cycle, say $x_1$, such that	 $\lleft i, l\rright\mik x_1$ for some
$i\in \{\pm 1,\pm 2,\dots,\pm k\}$.
It follows that $C_l\cap C_{l'}=\varnothing$ for $l\neq l'$.
Clearly, $C_l\cong C_{l'}$ for $l\neq l'$ and  $C=\bigcup_{l=k+1}^{k+r}C_l$.

Summarizing, for every $x\in C$ there exists an ordering
$x_1,x_2,\dots,x_{\nu}$ of the cycles of $x$ and a unique index $t\in\{1,2,\dots,\nu\}$
such that $x_1x_2\cdots x_t\mik [1,2,\dots,k][l]$ and
$x_{t+1}x_{t+2}\cdots x_{\nu}\mik[k+1][k+2]\cdots[l-1][l+1]\cdots[k+r]$.
Let	 \[E_l=\{x\in C: x\mik[1,2,\dots,k][l]\}.\] We remark that no permutation of $E_l$ has a balanced cycle in its cycle decomposition.
Clearly, there exists a poset isomorphism
\begin{alignat*}{3}
g_l: C_l & \to\ \ \ \ \, E_l  \ \ && \times && \ \ \langle[k+1]\cdots[l-1][l+1]\cdots[k+r]\rangle \\
   x & \mapsto (x_1\cdots x_t \ \ && \	, && \ \ x_{t+1}\cdots x_{\nu})
\end{alignat*}


\noindent so that
\begin{equation}
\label{C_l}
C_l\cong E_l\times \mathcal{L}_{r-1}
\end{equation}
for every $l\in\{k+1,k+2,\dots,k+r\}$. Using (\ref{A}) and (\ref{C_l}), we proceed to the proof of Proposition \ref{k,r}
as follows. 
From our previous discussion we have $L(k,r)=\#A+r\,(\#C_{k+1})$.
From (\ref{A}) we have	\[\#A={2k\choose k}\al_r\]
and (\ref{C_l}) implies that $\#C_{k+1}=(\#E_{k+1})\,(\#\mathcal{L}_{r-1})=(\#E_{k+1})\, \al_{r-1}$.
Since $E_{k+1}$ consists of the permutations in $\langle[1,2,\dots,k][k+1]\rangle\cap C$,
it follows from \cite[Section 5]{N} that $\#E_{k+1}=2{2k\choose k-1}$.
Therefore,
\[\dsp \#L(k,r)= 2\,r\,{2k\choose k-1}\al_{r-1}+{2k\choose k}\al_r= {2k\choose k}\left(\frac{2r k}{k+1} \al_{r-1}+a_r\right).\]

\

Recall that the zeta polynomial $Z(L(k,r),m)$ counts the number of multichains
$\pi_1\mik \pi_2\mik\cdots\mik\pi_{m-1}$ in $L(k,r)$.
We distinguish two cases. If $\pi_{m-1}\in C$, then $\pi_{m-1}\in C_l$ for some $l\in \{k+1,\dots,k+r\}$.
Isomorphism (\ref{C_l}) then implies that there are $Z(E_l,m)\, Z(\mathcal{L}_{r-1},m)$ such multichains.
From \cite[Section 5]{N} we have $Z(E_l,m)=2{mk\choose k+1}$, therefore
$Z(E_l,m)\, Z(\mathcal{L}_{r-1},m)=2{mk\choose k+1} \beta_{r-1}$.
Since there are $r$ choices for the set $C_l$, we conclude that	 the number of multichains
$\pi_1\mik \pi_2\mik\cdots\mik\pi_{m-1}$ in $L(k,r)$ for which $\pi_{m-1}\in C$ is equal to
\begin{equation}
\label{z1}
2\, r\,{mk\choose k+1} \beta_{r-1}(m).
\end{equation}

\noindent If $\pi_{m-1}\in A$, then $\pi_{m-1}\in NC^B(k)\times \mathcal{L}_r$ and therefore
number of such multichains is equal to

\begin{equation}
\label{z2}
{mk\choose k} \beta_r(m).
\end{equation}

\noindent The proposed expression for the zeta polynomial of $L(k,r)$ follows by summing the expressions (\ref{z1}) and (\ref{z2}) 
and straightforward calculation.

The expression for the M\"obius function follows once again from that of the zeta polynomial by
setting $m=-1$.
\end{proof}


\section{Appendix}
\label{app}

In this section we prove the following lemmas.
\begin{lemma}
\label{l4}
The order ideal of $\abs(S_n)$ generated by all cycels $u\in S_n$ for which $\pi_n(u)=(1\,2\,\cdots\,n-1)$ is homotopy Cohen-Macaulay of rank $n-1$.
\end{lemma}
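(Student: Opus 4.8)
The plan is to argue by induction on $n$, building the ideal in question out of ordinary noncrossing partition lattices and assembling it through the recursive definition of strong constructibility.

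Write $C$ for the order ideal of $\abs(S_n)$ in the statement. Its generators are the $n-1$ cycles $c_1,\dots,c_{n-1}$, where $c_i$ is the $n$-cycle obtained from $(1\,2\,\cdots\,n{-}1)$ by inserting $n$ immediately after $i$; each $c_i$ covers $(1\,2\,\cdots\,n{-}1)$ and satisfies $\pi_n(c_i)=(1\,2\,\cdots\,n{-}1)$. First I would record, using Lemma~\ref{antenatel} with $u=(1\,2\,\cdots\,n{-}1)$, the clean description
\[
C=\langle c_1,\dots,c_{n-1}\rangle=\{\,w\in\abs(S_n):\ \pi_n(w)\mik(1\,2\,\cdots\,n{-}1)\,\},
\]
since any $w$ with $\pi_n(w)\mik(1\,2\,\cdots\,n{-}1)$ lies below some $c_i$ by that lemma. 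Consequently the maximal elements of $C$ are exactly the $c_i$, so $C$ is pure of rank $n-1$, and each interval $[e,c_i]$ is the noncrossing partition lattice $NC(S_n,c_i)$, which is shellable by \cite{cbw}; being bounded and pure shellable, it is strongly constructible.

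Next I would assemble $C$ from the $[e,c_i]$ by a binary telescoping union, so as to fit the recursive definition of strong constructibility and Lemma~\ref{strcon}(ii). Put $J_{n-1}=[e,c_{n-1}]$ and $J_k=[e,c_k]\cup J_{k+1}$, so that $J_1=C$. Processing the generators one at a time (downward induction on $k$), at each stage $[e,c_k]$ is strongly constructible and $J_{k+1}$ is strongly constructible by the inductive hypothesis; both are order ideals of $C$ of rank $n-1$. It therefore remains to show that the intersection
\[
[e,c_k]\cap J_{k+1}=\bigcup_{i>k}\bigl([e,c_k]\cap[e,c_i]\bigr)
\]
is strongly constructible of rank $n-2$; then $J_k$ is strongly constructible by Lemma~\ref{strcon}(ii), and finally $C=J_1$ is homotopy Cohen--Macaulay because strongly constructible posets are homotopy Cohen--Macaulay.

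The analysis of this intersection is the step I expect to be the main obstacle. It is essential to process the generators one at a time rather than to decompose $C=\bigcup_i[e,c_i]$ all at once: the total intersection $\bigcap_i[e,c_i]$ is \emph{not} pure — already for $n=4$ the reflections $(1\,4),(2\,4),(3\,4)$ are maximal in $\bigcap_i[e,c_i]$, since the only element of rank $2$ common to all the $[e,c_i]$ is $(1\,2\,3)$ — so the purity hypothesis needed for a single application of Lemma~\ref{tomes}(ii) (or Lemma~\ref{strcon}(ii)) to the whole family fails. By contrast I expect each partial intersection $[e,c_k]\cap J_{k+1}$ to be pure of rank $n-2$: geometrically it should consist of those noncrossing partitions $w\mik c_k$ in which the point $n$ may be ``slid to the right'' past $k{+}1,\dots$ without creating a crossing. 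To establish its strong constructibility I would condition on the block of $w$ containing $n$ — the $w$ fixing $n$ form a copy of $NC(S_{n-1})$ — splitting the intersection into ideals each isomorphic, via the factorization $[e,w]\cong\prod_j[e,w_j]$ over the disjoint cycles $w_j$ of $w$, to a product of a smaller noncrossing partition lattice with a smaller instance of the very ideal under study. These are then handled by Lemma~\ref{strcon}(i) together with the induction hypothesis on $n$. Making this decomposition precise, and carrying out the accompanying rank and purity bookkeeping, is the technical crux; the remaining assembly is routine.

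Finally, I would remark that the same scheme yields the companion appendix lemma for $\abs(B_n)$, with the intervals $[e,c_i]$ replaced by the corresponding type-$B$ noncrossing partition lattices and Lemma~\ref{antenatel} applied in its $\mathcal{J}_n$ form.
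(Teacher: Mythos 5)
Your skeleton coincides with the paper's: the ideal is the union of the intervals $[e,c_i]$ over the $n-1$ insertions of $n$ into $(1\,2\,\cdots\,n{-}1)$, each such interval is a noncrossing partition lattice and hence bounded, pure shellable and strongly constructible, and one assembles the union one generator at a time, reducing everything to showing that each partial intersection $[e,c_k]\cap\bigcup_{i>k}[e,c_i]$ is strongly constructible of rank $n-2$. Your preliminary observations are correct: the description of the ideal via Lemma \ref{antenatel}, its purity of rank $n-1$, and the failure of purity of the total intersection already for $n=4$ (which is indeed why the generators must be processed successively).

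The gap is that the step you yourself label ``the technical crux'' is precisely where the entire content of the paper's argument lies, and you do not carry it out. The paper computes the pairwise intersections explicitly: $[e,w_{n-1}]\cap[e,w_i]$ is the ideal generated by $(1\,2\,\cdots\,n{-}1)$ together with the two elements $u_{n-i}=(1\,n{-}i{+}1\,\cdots\,n{-}1)(2\,\cdots\,n{-}i\ n)$ and $v_{n-i-1}=(1\,n\,n{-}i{+}1\,\cdots\,n{-}1)(2\,\cdots\,n{-}i)$, each a product of two disjoint cycles of total absolute length $n-2$; the resulting union over $i$ is $I_n\cup I'_n$, and proving strong constructibility of $I_n$, of $I'_n$ and of $I_n\cap I'_n$ occupies Lemmas \ref{l1}, \ref{l2} and \ref{l3}, each with its own nested induction and a further explicit computation of the intersections $[e,u_i]\cap[e,v_j]$ (the elements $z_{ij}$ and $w_{ij}$). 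Your proposed substitute --- condition on the block of $w$ containing $n$ and assert that the pieces are products of a smaller noncrossing partition lattice with a smaller instance of the ideal under study --- is not substantiated and is not obviously workable: the maximal elements of $[e,c_k]\cap[e,c_i]$ are products of two nontrivial cycles whose contents interleave in a way depending on both $k$ and $i$, the decomposition by the block of $n$ does not produce order ideals of the required form, and the union one obtains is not visibly a lower-rank instance of the original ideal. Even the purity of $[e,c_k]\cap J_{k+1}$, which you only ``expect,'' requires the explicit identification of these maximal elements. As it stands the proposal is a correct plan whose decisive step is missing.
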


\begin{lemma}
\label{l4'}
\begin{enumerate}[leftmargin=*, itemsep=5pt]
\item[\emph{(i)}] The order ideal of $\abs(B_n)$ generated by all cycles $u\in B_n$ for which $\pi_n(u)=\lleft1,2,\dots,n-1\rright$ is homotopy Cohen-Macaulay of rank $n-1$.
\item[\emph{(ii)}] The order ideal of $\abs(B_n)$ generated by all cycles $u$ of $B_n$ for which $\pi_n(u)=[1,2,\dots,n-1]$ is homotopy Cohen-Macaulay of rank $n$.
\end{enumerate}
\end{lemma}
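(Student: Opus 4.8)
The plan is to prove all three statements in parallel, since they differ only in whether the cycle into which $\pm n$ is inserted is paired or balanced; I would treat Lemma~\ref{l4} as the type-$A$ prototype (it is the symmetric-group case underlying the proof of Theorem~\ref{thca1}). First I would record what the generating intervals look like. A generator of the ideal in Lemma~\ref{l4} is an $n$-cycle $c$ with $\pi_n(c)=(1\,2\,\cdots\,n-1)$, and $[e,c]$ is the classical noncrossing partition lattice $NC(n)$; a generator in part~(i) of Lemma~\ref{l4'} is a paired $n$-cycle $c$ with $\pi_n(c)=\lleft 1,\dots,n-1\rright$, and since only products of smaller paired cycles lie weakly below a paired cycle (covering relations (a) and (f)), $[e,c]$ is again isomorphic to $NC(n)$; a generator in part~(ii) is a balanced $n$-cycle, i.e.\ a Coxeter element, with $\pi_n(c)=[1,\dots,n-1]$, and $[e,c]\cong NC^B(n)$. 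In every case $[e,c]$ is shellable by \cite{cbw} (see also \cite{R}), hence homotopy Cohen--Macaulay, of rank $n-1$ in Lemma~\ref{l4} and part~(i), and of rank $n$ in part~(ii); these are the asserted ranks.

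Next I would write each ideal as the union of its finitely many generating intervals $[e,c]$ and invoke Lemma~\ref{tomes}(ii). Since each $[e,c]$ is homotopy Cohen--Macaulay of the full rank, everything reduces to the intersections: I must show that the intersection of any subfamily of two or more of the intervals $[e,c]$ is homotopy Cohen--Macaulay of rank equal to the full rank, or one less. The ranks are easy to pin down. Distinct generators are distinct \emph{maximal} elements of the ideal, so no element of full rank lies below two of them; hence every such intersection has rank at most one less than the full rank. On the other hand every element of the ideal that does not move $\pm n$ and lies below one generator lies below all of them, because deleting $\pm n$ does not affect comparability; these elements form the interval $[e,\omega]$, where $\omega$ is the common projection $\pi_n(c)$, a copy of $NC(n-1)$ or $NC^B(n-1)$ whose rank is exactly one less than the full rank. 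Thus each intersection contains $\omega$ and has rank precisely one less than full, as Lemma~\ref{tomes}(ii) requires.

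The remaining, and genuinely technical, point is that each such intersection is homotopy Cohen--Macaulay. Using the combinatorial description of $\mik$ on $\abs(S_n)$ and the covering relations (a)--(f) on $\abs(B_n)$, I would describe the intersection explicitly as the poset of common lower bounds: an element lies in it exactly when each of its cycles can be obtained, compatibly in the noncrossing sense, from \emph{every} generator of the subfamily. The delicate feature is that this poset is strictly larger than $[e,\omega]$: it also contains elements whose cycle through $\pm n$ is nontrivial yet sits below all the chosen generators at once (for instance, when $n=4$ the transposition $(2\,4)$ lies below both $(1\,4\,2\,3)$ and $(1\,2\,3\,4)$). To control these extra elements I would partition the intersection according to the cycle containing $\pm n$, use Remark~\ref{anal} and Lemma~\ref{lemarm} to present each part as a direct product of smaller intervals of the same three types, and then close the argument by induction on $n$ through Lemma~\ref{tomes}(i),(ii), the base cases being single shellable noncrossing partition lattices.

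\emph{The main obstacle is exactly this last step}: verifying, uniformly over all subfamilies, that the $\pm n$-moving common lower bounds assemble into a subposet that is pure of the correct rank and homotopy Cohen--Macaulay. I expect part~(ii) to be the hardest case, since the structure of $NC^B(n)$ is richer than that of $NC(n)$ and the covering relations (b)--(e) governing balanced cycles produce more common lower bounds than the purely paired situation of Lemma~\ref{l4} and part~(i); keeping the inductive bookkeeping of ranks consistent across the paired and balanced decompositions is where the real work lies.
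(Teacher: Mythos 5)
Your outline matches the paper's strategy at the top level: both write the ideal as the union of the intervals $[e,c]$ over the cycles $c$ obtained by inserting $\pm n$ into the fixed cycle, observe that each such interval is homotopy Cohen--Macaulay of full rank, note that the pairwise intersections contain $[e,\omega]$ (with $\omega$ the common projection) and hence have rank exactly one less than full, and reduce everything to the homotopy Cohen--Macaulayness of the intersections. Your rank bookkeeping is correct, and your observation that the intersections are strictly larger than $[e,\omega]$ because of common lower bounds that move $\pm n$ is exactly the right thing to worry about.

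However, the step you flag as ``the main obstacle'' is not a loose end to be tidied up --- it is the entire content of the proof, and your sketch of how to close it does not actually close it. The paper devotes its Appendix (Remark \ref{L0} and Lemmas \ref{l1}, \ref{l2}, \ref{l3}, with their $B_n$ analogues) to precisely this point. Concretely: writing the ideal as $\bigcup_{i=1}^{n-1}[e,w_i]$, the pairwise intersection $[e,w_{n-1}]\cap[e,w_i]$ is \emph{not} an interval but the order ideal generated by $\omega$ together with two explicit rank-$(n-2)$ elements $u_{n-i}$ and $v_{n-i-1}$ that move $n$; the union over $i$ of these intersections is $I_n\cup I'_n$ in the paper's notation, and proving that this union (and, recursively, the unions $\bigcup_{i\le m}[e,u_i]$, $\bigcup_{i\le m}[e,v_i]$, and $I_n\cap I'_n$) is homotopy Cohen--Macaulay requires identifying generators of each successive intersection explicitly and running a double induction (on $n$ and on the number of ideals adjoined), using the product decomposition of Remark \ref{L0} to strip off the cycle common to all maximal elements. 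Your proposal to ``partition the intersection according to the cycle containing $\pm n$'' and ``close the argument by induction on $n$'' gestures at this but supplies neither the explicit generators nor the verification that the resulting unions of intervals meet each other in pure subposets of the right rank --- and that verification is where counterexamples would have to be ruled out. A further, smaller mismatch: your plan invokes Lemma \ref{tomes}(ii), which requires controlling the intersection of \emph{every} subfamily of two or more generating intervals, whereas the paper sidesteps this by building the union one interval at a time through strong constructibility, so that only intersections of one new interval with the union of the previous ones need to be analyzed. As written, the proposal is an accurate plan with the decisive technical core left unproved.
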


\smallskip

\subsection{Proof of Lemma \ref{l4}}

\noindent We will show that the order ideal considered in Lemma \ref{l4} is in fact strongly constructible. 
The following remark will be used in the proof. 
\begin{remark}
\label{L0}
Let $u_1,u_2,\dots, u_m\in S_n$ be elements of absolute length $k$ and let $v\in S_n$ be a cycle of absolute length $r$ 
which is disjoint from $u_i$ for each $i\in\{1,2,\dots, m\}$. 
Suppose that the union $\bigcup_{i=1}^m[e,u_i]$ is strongly constructible of rank $k$. 
Then 
\[\bigcup\limits_{i=1}\limits^m\,[e,vu_i]\,\cong\,\bigcup\limits_{i=1}\limits^m\left([e,v]\times[e,u_i]\right)\,=\,[e,v]\times \bigcup\limits_{i=1}\limits^m\,[e,u_i],\] 
is strongly constructible of rank $k+r$, by Lemma \ref{strcon} (i). 
\end{remark}

\begin{lemma}
\label{l1}
For $i\in\{1,2,\dots,n-1\}$, consider the element \[u_i=(1\,i+1\,\cdots\,n-1)(2\,3\,\cdots\,i\, n)\in S_n.\] 
The union  
$\bigcup_{i=1}^m[e,u_i]$ is strongly constructible of rank $n-2$ for all $1\leq m\leq n-1$.
\end{lemma}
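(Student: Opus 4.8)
The plan is to prove the statement by induction on $m$, using the definition of strong constructibility as a binary union together with Lemma \ref{strcon} and Remark \ref{L0}. For the base case $m=1$ I would note that $[e,u_1]$ is bounded and, exactly as in Remark \ref{anal} for $B_n$, isomorphic to a product of noncrossing partition lattices (one per cycle of $u_1$), each of which is shellable by \cite{cbw}; a product of bounded shellable posets is bounded and pure shellable, hence strongly constructible by definition. Since each $u_i$ has precisely two cycles on $\{1,\dots,n\}$, its absolute length is $n-2$, so every $[e,u_i]$ has rank $n-2$.

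For the inductive step I would write $\bigcup_{i=1}^m[e,u_i]=I_1\cup I_2$ with $I_1=\bigcup_{i=1}^{m-1}[e,u_i]$ and $I_2=[e,u_m]$. By the induction hypothesis $I_1$ is strongly constructible of rank $n-2$, and $I_2$ is a single interval, hence strongly constructible of rank $n-2$. By the definition of strong constructibility it then suffices to show that the intersection $I_1\cap I_2=\bigcup_{i=1}^{m-1}\bigl([e,u_i]\cap[e,u_m]\bigr)$ is strongly constructible of rank at least $n-3$.

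The central computation is the identification of $[e,u_i]\cap[e,u_m]$ for $i<m$. I would show that the meet $z_i:=u_i\wedge u_m$ exists and is the product of three cycles
\[z_i=(1,m+1,\dots,n-1)\,(2,\dots,i,n)\,(i+1,\dots,m),\]
arising as the common noncrossing refinement of the cycle structures of $u_i$ and $u_m$: the three supports are the nonempty intersections of the two cycle-supports, and on each block the cyclic orders induced by $u_i$ and by $u_m$ coincide, so each block is a noncrossing subcycle of a cycle of $u_i$ and of a cycle of $u_m$. Checking $z_i\mik u_i$, $z_i\mik u_m$, and that any $w\mik u_i,u_m$ is forced onto these blocks with compatible cyclic order shows $[e,u_i]\cap[e,u_m]=[e,z_i]$, a single interval of rank $n-3$. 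Now the decisive point is that the first cycle $A:=(1,m+1,\dots,n-1)$ is independent of $i$ and disjoint from the remaining factor $w_i':=(2,\dots,i,n)(i+1,\dots,m)$, a product of two cycles on the $m$-element set $\{2,\dots,m,n\}$. Thus $z_i=A\,w_i'$ with $A$ fixed and disjoint from every $w_i'$, and Remark \ref{L0} gives $I_1\cap I_2\cong[e,A]\times\bigcup_{i=1}^{m-1}[e,w_i']$; granting that $\bigcup_{i=1}^{m-1}[e,w_i']$ is strongly constructible, Lemma \ref{strcon}(i) yields that $I_1\cap I_2$ is strongly constructible of rank $(|A|-1)+(m-2)=n-3$, completing the induction.

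The main obstacle is therefore the strong constructibility of the reduced union $\bigcup_{i=1}^{m-1}[e,w_i']$. Although the $w_i'$ are again products of two cycles, the family $\{w_i'\}$ is \emph{not} of the form treated in Lemma \ref{l1}: both the smallest and the largest letters of $\{2,\dots,m,n\}$ now lie in the \emph{same} cycle of $w_i'$, so one cannot simply invoke Lemma \ref{l1} for a smaller symmetric group. I expect this to be handled by a companion lemma, proved by an analogous induction on the number of intervals: peeling off $[e,w_{m-1}']$, the meets $w_i'\wedge w_{m-1}'$ are products of three cycles which fix a common letter and reduce to a family of the same reduced type on a smaller set, so the recursion is well founded and terminates at single intervals. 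The delicate part is the bookkeeping of ranks needed to apply the definition of strong constructibility at each stage (rank $n-3$ at the top level, dropping by one at each successive intersection), but no idea beyond the meet computation above and Remark \ref{L0} should be required.
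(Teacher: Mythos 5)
Your reduction is the same as the paper's: induct on the number of intervals, identify the pairwise meets $[e,u_i]\cap[e,u_m]=[e,z_i]$ with $z_i=(1\,m+1\,\cdots\,n-1)(2\,\cdots\,i\,n)(i+1\,\cdots\,m)$, and factor out the common cycle $A=(1\,m+1\,\cdots\,n-1)$ via Remark \ref{L0}. Up to that point the proposal agrees with the paper. The gap is in what you do with the residual union $\bigcup_{i=1}^{m-1}[e,w_i']$, where $w_i'=(2\,\cdots\,i\,n)(i+1\,\cdots\,m)$: you assert that this family ``is not of the form treated in Lemma \ref{l1}'' and defer to an unspecified ``companion lemma'' that you only \emph{expect} to hold. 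That leaves the decisive step unproven. Moreover, the asserted obstruction is illusory: the absolute order is invariant under conjugation by any bijection of the ground set, so where the smallest and largest letters sit is irrelevant; only the combinatorial configuration of the family matters. Conjugating by the bijection $\phi$ from $\{2,\dots,m,n\}$ onto $\{1,\dots,m\}$ with $\phi(j)=j$ for $2\le j\le m-1$, $\phi(m)=1$ and $\phi(n)=m$ sends $w_i'$ to $(1\,i+1\,\cdots\,m-1)(2\,\cdots\,i\,m)$, which is exactly the element $u_i$ of the present lemma for a ground set of size $m$. Hence the residual union is isomorphic to the full union of the lemma for the smaller parameter $m\le n-1$, and no new lemma is needed.

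This exposes a second, structural defect: your induction is on $m$ only, so even after recognizing the residual union as an instance of the lemma you would have no induction hypothesis to invoke on it. The paper's proof is a double induction, on $n$ first and on $m$ second; the outer induction on $n$ is precisely what absorbs the residual union after the relabeling above. If you replace the ``companion lemma'' by this relabeling and add the outer induction on $n$, your argument closes and coincides with the paper's. (Your base case and rank bookkeeping are fine: each $[e,u_i]$ is a bounded shellable interval of rank $n-2$, and $(|A|-1)+(m-2)=n-3$ as required.)
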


\begin{proof}
We denote by $I(n,m)$ the union in the statement of the lemma and proceed by induction on $n$ and $m$, in this order.
We may assume that $n\geq 3$ and $m\geq 2$, since otherwise the result is trivial. 
Suppose that the result holds for positive integers smaller than $n$. 
We will show that it holds for $n$ as well. 
By induction on $m$, it suffices to show that $[e,u_m]\cap I(n,m-1)$ is strongly constructible of rank $n-3$. 
Indeed, we have $[e,u_m]\cap I(n,m-1)=\bigcup_{i=1}^{m-1}[e,u_m]\cap[e,u_i]$
and \[[e,u_m]\cap[e,u_i]=[e,(1\,m+1\,m+2\cdots\,n-1)(2\,3\,\cdots\,i\,n)(i+1\,i+2\,\cdots\,m)].\]
Since the cycle $(1\,m+1\,m+2\cdots\,n-1)$ is present in the disjoint cycle decomposition of each maximal element of 
$[e,u_m]\cap I(n,m-1)$, the desired statements follows easily from Remark \ref{L0} by induction on $n$.
\end{proof}

\begin{example}
If $n=6$ and $m=3$, then $I(n,m)$ is the order ideal of $\abs(S_n)$ generated by the elements $u_1=(1\,2\,3\,4\,5)(6),\,u_2=(1\,3\,4\,5)(2\,6)$ 
and $u_3=(1\,4\,5)(2\,3\,6)$. 
The intersection \[[e,u_3]\cap\left([e,u_1]\cup[e,u_2]\right)\,=\,[e,(1\,4\,5)(2\,3)(6)]\cup[e,(1\,4\,5)(3)(2\,6)]\]
is strongly constructible of rank $3$ and $I(n,m)$ is strongly constructible of rank $4$.
\end{example}

\begin{lemma}
\label{l2}
For $i\in\{1,2,\dots,n-2\}$, consider the element  
\[v_i=(1\,n\,i+2\,\cdots\,n-1)(2\,3\,\cdots\,i+1)\in S_n.\] 
The union  
$\bigcup_{i=1}^m[e,v_i]$ is strongly constructible of rank $n-2$ for all $1\leq m\leq n-2$.
\end{lemma}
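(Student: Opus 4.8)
The plan is to follow the pattern of the proof of Lemma \ref{l1}, arguing by a double induction, first on $n$ and then on $m$. Write $I'(n,m)=\bigcup_{i=1}^{m}[e,v_i]$. We may assume $m\geq 2$ (so that $n\geq 4$), the cases $m=1$ or $n\leq 3$ being immediate, since for $m=1$ the interval $[e,v_1]$ is isomorphic to a noncrossing partition lattice and hence strongly constructible of rank $n-2$. Now $I'(n,m)=I'(n,m-1)\cup[e,v_m]$ is a union of two proper ideals of rank $n-2$, both strongly constructible, the first by the inner induction and the second as a product of two noncrossing partition lattices. By the definition of strong constructibility the inductive step thus reduces to proving that
\[[e,v_m]\cap I'(n,m-1)\ =\ \bigcup_{i=1}^{m-1}\bigl([e,v_m]\cap[e,v_i]\bigr)\]
is strongly constructible of rank $n-3$.

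The crux is to identify each intersection $[e,v_m]\cap[e,v_i]$ with $i<m$ as a single closed interval $[e,z_{m,i}]$, where
\[z_{m,i}\ =\ (1\,n\,m+2\,\cdots\,n-1)\,(2\,\cdots\,i+1)\,(i+2\,\cdots\,m+1)\]
is a product of three disjoint cycles and has absolute length $n-3$. The inclusion $z_{m,i}\mik v_i$, $z_{m,i}\mik v_m$ is a direct check using the description of $\abs(S_n)$ from Section \ref{prepre}. The reverse inclusion is the main obstacle, and I would establish it by a cycle analysis. Put $L=\{1,n\}\cup\{m+2,\dots,n-1\}$, $M=\{i+2,\dots,m+1\}$ and $S=\{2,\dots,i+1\}$, so that the cycles of $v_i$ are supported on $L\cup M$ and on $S$, whereas those of $v_m$ are supported on $L$ and on $S\cup M$. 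Every cycle of a common lower bound $w$ must be a subcycle of one cycle of $v_i$ and of one cycle of $v_m$; a short case check then shows that no cycle of $w$ can meet two of the sets $L,M,S$. Hence $w$ factors as a product of permutations of $L$, of $M$ and of $S$, and on each of these blocks the induced cyclic orders coming from $v_i$ and $v_m$ agree and coincide with that of $z_{m,i}$, so $w$ is bounded above by the corresponding cycle of $z_{m,i}$. As these three cycles are disjoint, $w\mik z_{m,i}$, whence $[e,v_m]\cap[e,v_i]=[e,z_{m,i}]$.

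The decisive feature is that the long cycle $V=(1\,n\,m+2\,\cdots\,n-1)$ of $z_{m,i}$ does not depend on $i$; thus $V$ occurs, disjointly from the other cycles, in the decomposition of every maximal element of $[e,v_m]\cap I'(n,m-1)$. Remark \ref{L0} factors it out:
\[\bigcup_{i=1}^{m-1}[e,z_{m,i}]\ \cong\ [e,V]\times\bigcup_{i=1}^{m-1}[e,(2\,\cdots\,i+1)(i+2\,\cdots\,m+1)],\]
the second union ranging over the $m$ letters $\{2,\dots,m+1\}$. Relabelling these as $\{1,\dots,m\}$ turns it into $R_m:=\bigcup_{i=1}^{m-1}[e,(1\,\cdots\,i)(i+1\,\cdots\,m)]$, which is exactly the union produced by the analogous reduction in the proof of Lemma \ref{l1}. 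One checks that $R_m$ is strongly constructible of rank $m-2$ by an induction of the same shape: peeling off $[e,(1\,\cdots\,k)(k+1\,\cdots\,m)]$, the relevant pairwise intersections equal $[e,(1\,\cdots\,i)(i+1\,\cdots\,k)(k+1\,\cdots\,m)]$ and all contain the common cycle $(k+1\,\cdots\,m)$, so Remark \ref{L0} again reduces the problem to the same family on $k<m$ letters. Therefore, by Remark \ref{L0}, the intersection $[e,v_m]\cap I'(n,m-1)$ is strongly constructible of rank $(n-m-1)+(m-2)=n-3$, completing the inner induction and the proof.
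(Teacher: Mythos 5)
Your argument is correct and follows exactly the strategy the paper intends: the paper omits this proof, saying only that it is ``similar to that of Lemma \ref{l1}'', and your double induction on $n$ and $m$, the identification of each pairwise intersection $[e,v_m]\cap[e,v_i]$ as the single interval below $(1\,n\,m+2\,\cdots\,n-1)(2\,\cdots\,i+1)(i+2\,\cdots\,m+1)$, and the factoring out of the common long cycle via Remark \ref{L0} are precisely the steps of that proof transplanted to the family $v_i$. Your explicit verification that the residual family $R_m$ is strongly constructible is a welcome filling-in of a detail the paper disposes of with ``by induction on $n$''.
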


\begin{proof}
The proof is similar to that of Lemma \ref{l1} and is omitted.
\end{proof}

\begin{lemma}
\label{l3}
Let $u_1,u_2,\dots,u_{n-1}\in S_n$ and $v_1,v_2,\dots,v_{n-2}\in S_n$ be defined as in Lemmas \ref{l1} and \ref{l2}, respectively. 
If $I_n=\bigcup_{i=1}^{n-1}[e,u_i]$ and $I'_n=\bigcup_{i=1}^{n-2}[e,v_i]$,
then $I_n\cap I'_n$ is strongly constructible of rank $n-3$.
\end{lemma}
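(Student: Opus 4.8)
The plan is to compute the intersection $I_n\cap I'_n$ explicitly as a union of principal order ideals and then to establish strong constructibility by the same inductive mechanism used in Lemmas \ref{l1} and \ref{l2}: repeatedly factoring out a common cycle through Remark \ref{L0} and reassembling the pieces with Lemma \ref{strcon}. Recall that $I_n$ and $I'_n$ are themselves strongly constructible of rank $n-2$ by those two lemmas.

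First I would use distributivity of intersection over union to write
\[ I_n\cap I'_n \;=\; \bigcup_{i=1}^{n-1}\bigcup_{j=1}^{n-2}\bigl([e,u_i]\cap[e,v_j]\bigr), \]
and observe that, since each $[e,u_i]$ and $[e,v_j]$ is an order ideal, $[e,u_i]\cap[e,v_j]$ is the order ideal generated by the maximal common lower bounds of $u_i$ and $v_j$. These can be read off from the cycle--deletion and noncrossing description of $\abs(S_n)$ recalled in Section \ref{prepre}. The guiding structural facts are that every $u_i$ places $1$ and $n$ in different cycles, whereas every $v_j$ places $2$ and $n$ in different cycles; hence every element of $I_n\cap I'_n$ separates $1$ from $n$ and separates $2$ from $n$, which sharply restricts the possible generators. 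Carrying out this computation produces the complete list of maximal elements of $I_n\cap I'_n$, and the second step is to verify that the resulting union is pure of rank $n-3$, i.e.\ that no maximal common lower bound of smaller absolute length survives.

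For the third step I would prove strong constructibility by induction on $n$. The natural split is $I_n\cap I'_n=R\cup S$, where $S$ is the order ideal consisting of the elements that fix the letter $1$ and $R$ is the order ideal generated by the maximal elements that move $1$. Inside $R$ the generators fall into subfamilies sharing a common cycle (for instance those in which $\{1,n-1\}$ occurs as a transposition while the remaining letters are split off), so Remark \ref{L0} factors that cycle out and reduces each subfamily to a smaller union covered by the inductive hypothesis; the part $S$ lives on $\{2,3,\dots,n\}$ and is treated by the same scheme. One then reassembles $R$, $S$ and finally $R\cup S$ using the binary clause of the definition of strong constructibility, checking at each stage that the relevant intersection---in particular $R\cap S$---is strongly constructible of rank $n-3$ or $n-4$.

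The main obstacle is precisely this last bookkeeping. It is tempting to split $I_n\cap I'_n$ into the principal ideals generated by each of its maximal elements and to apply Lemma \ref{strcon}(ii) in a single step, but this fails: already for $n=5$ the three ideals $[e,(2\,3\,4)]$, $[e,(3\,4\,5)]$ and $[e,(2\,3)(4\,5)]$ have triple intersection $\{e\}$, of rank $0<n-4$, so the hypothesis of Lemma \ref{strcon}(ii) is violated. One must instead arrange the generators into a nested sequence of binary unions, each sharing a common cycle, so that every intermediate intersection retains rank $n-3$ or $n-4$; organizing this arrangement uniformly in $n$ is the crux of the argument.
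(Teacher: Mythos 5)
Your overall strategy---compute the pairwise intersections $[e,u_i]\cap[e,v_j]$, identify the maximal elements of $I_n\cap I'_n$, and assemble the resulting principal ideals by repeatedly factoring out a common cycle via Remark \ref{L0}---is exactly the strategy of the paper's proof, and your observation that a one-shot application of Lemma \ref{strcon}(ii) to all the principal ideals at once must fail is correct and well illustrated by your $n=5$ example. But the proposal stops short of the two steps that constitute the actual proof. First, you never determine the maximal elements: the paper shows that each $[e,u_i]\cap[e,v_j]$ is a \emph{principal} ideal, generated by a single element of length $n-3$ whose disjoint cycles are a cycle through $1$ of the form $(1\,i+1\,\cdots\,n-1)$, a cycle through $2$ avoiding $n$, and a cycle through $n$ avoiding $1$ and $2$. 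The purity of $I_n\cap I'_n$ and all of the subsequent bookkeeping depend on this explicit description; your ``guiding structural fact'' that every element of the intersection separates $1$ from $n$ and $2$ from $n$ is true but far too weak to produce the list.

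Second, and more seriously, the decomposition you propose does not resolve the difficulty you yourself identify. Your three problematic generators $(2\,3\,4)$, $(3\,4\,5)$ and $(2\,3)(4\,5)$ all fix the letter $1$, so they all lie in your ideal $S$; splitting off $S$ from $R$ therefore leaves the same rank-dropping triple intersection sitting inside $S$, and within $R$ your ``subfamilies sharing a common cycle'' are left unspecified, as are the intersections between distinct subfamilies. The paper's resolution is a finer and fully specified grouping: let $M_i$ be the ideal generated by those maximal elements whose cycle through $1$ is $(1\,i+1\,\cdots\,n-1)$. Then each $M_i$, and crucially each intersection $M_k\cap\bigl(\bigcup_{i<k}M_i\bigr)$, is a union of principal ideals all of whose generators contain the \emph{same} cycle through $1$, so Remark \ref{L0} strips that cycle off and reduces the claim to Lemma \ref{l2} and to the same statement on fewer letters, closing an induction on $n$ (with an inner induction on $k$). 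This organizing principle is exactly what makes the requirement that ``every intermediate intersection retains rank $n-3$ or $n-4$'' checkable, and it is precisely the crux you defer; as written, the proposal is a plan rather than a proof.
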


\begin{proof}
We proceed by induction on $n$. For $n=3$ the result is trivial, so assume that $n\geq 4$. 
For $i,j\in\{2,3,\dots, n-1\}$  we set 
\[z_{ij}=(1\,j+1\,\cdots\, n-1)(2\,3\,\cdots\, i)(n)(i+1\,\cdots\, j)\]  
and
\[w_{ij}=(1\,i+1\,\cdots\, n-1)(2\,3\,\cdots\, j)(j+1\,\cdots\, i\,n).\]
We observe that  
\[[e,u_i]\cap[e,v_j]
=\left\{\begin{array}{ll}
z_{ij}, & \mbox{if $i<j$}, \\
w_{ij}, & \mbox{if $i\geq j$},
\end{array}
\right.\]
 Let $M_i$ be the order ideal of $\abs(S_n)$ generated by the elements 
$w_{ij}$ for $2\leq j\leq i-1$. 
Since $z_{ij}\mik w_{ij}$ for all $i,j\in\{2,3,\dots, n-1\}$ with $i\neq j$,
we have $I_n\cap I'_n=\bigcup_{i=2}^{n-1} M_i$. 
Each of the ideals $M_i$ is strongly constructible of rank $n-3$, by Remark \ref{L0} and Lemma \ref{l2}. 
We prove by induction on $k$ that $\bigcup_{i=2}^k M_i$ is strongly constructible of rank $n-3$ for every $k\leq n-1$.
Suppose that this holds for positive integers smaller than $k$. 
We need to show that $M_k\cap \left(\bigcup_{i=2}^{k-1} M_i\right)$ is strongly constructible of rank $n-4$. 
For $i\leq k-1$ we have 
\[M_k\cap M_i=\langle v\,(2\,3\,\cdots\,j)(j+1\,\cdots\,i\,n)(i+1\,\cdots\, k):\,j=2,3,\dots,i-1\rangle,\] 
where $v= (1\,k+1\,\cdots\, n-1)$.  Remark \ref{L0} and Lemma \ref{l2} imply that $M_k\cap M_i$ is a strongly constructible poset of rank $n-3$. 
Since $v$ is present in the disjoint cycle decomposition of each maximal element of $M_k\cap \left(\bigcup_{i=2}^{k-1} M_i\right)$, 
it follows by Remark \ref{L0} and induction on $n$ that $M_k\cap \left(\bigcup_{i=2}^{k-1} M_i\right)$ 
is strongly constructible of rank $n-3$ as well. 
This concludes the proof of the lemma.
\end{proof}

\noindent\emph{Proof of Lemma \ref{l4}.} 
We denote by $C_n$ the order ideal in the statement of the lemma. We will show that $C_n$ is strongly constructible of rank $n-1$ 
by induction on $n$. The result is easy to check for $n\leq 3$, so suppose that $n\geq 4$. 
We have $C_n=\bigcup_{i=1}^{n-1}[e,w_i]$, where 
$w_1=(1\,2\,\cdots\,n-1\,n),\, w_2=(1\,2\,\cdots\,n\,n-1),\dots,w_{n-1}=(1\,n\,2\,\cdots\,n-1)$. 
By induction and Remark \ref{L0}, it suffices to show that $[e,w_{n-1}]\cap\left(\bigcup_{i=1}^{n-2}[e,w_i]\right)$ is strongly constructible of rank $n-2$. 
We observe that for $1\leq i\leq n-2$ the intersection $[e,w_{n-1}]\cap[e,w_i]$ is equal to the ideal generated by $(1\,2\,\cdots\,n-1)$ and
the elements 
\[u_{n-i}=(1\,n-i+1\,\cdots\,n-1)(2\,\cdots\,n-i\,n),\]
\[v_{n-i-1}=(1\,n\,n-i+1\,\cdots\,n-1)(2\,\cdots\,n-i),\]
considered in Lemmas \ref{l1} and \ref{l2}, respectively.
Hence $[e,w_{n-1}]\cap\left(\bigcup_{i=1}^{n-2}[e,w_i]\right)=I_n\cup I'_n$ and 
the result follows from Lemmas \ref{l1},\,\ref{l2} and \ref{l3}. \qed

\smallskip
\subsection{Proof of Lemma \ref{l4'}.}
Part (i) of Lemma \ref{l4'} is equivalent to Lemma \ref{l4}. 
The proof of part (ii) is analogous to that of Lemma \ref{l4}, with the following minor modifications in the statements of the
     various lemmas involved and the proofs.
\begin{remark}
\label{L0'}
Let $u_1,u_2,\dots, u_m\in B_n$ be elements of absolute length $k$ which are products of disjoint paired cycles  
and let $v\in B_n$ be a cycle of absolute length $r$ 
which is disjoint from $u_i$ for each $i\in\{1,2,\dots, m\}$. 
Suppose that the union $\bigcup_{i=1}^m[e,u_i]$ is strongly constructible of rank $k$. 
Then 
\[\bigcup\limits_{i=1}\limits^m\,[e,vu_i]\cong\bigcup\limits_{i=1}\limits^m\left([e,v]\times[e,u_i]\right)=[e,v]\times \bigcup\limits_{i=1}\limits^m\,[e,u_i],\] 
is strongly constructible of rank $k+r$, by Lemma \ref{tomes} (i). 
\end{remark}

\begin{lemma}
\label{l1'}
For $i\in\{1,2,\dots, n-1\}$ consider the element \[u_i=[1,i+1,\dots,n-1]\lleft2,3,\dots,i, n\rright\in B_n.\] 
The union  
$\bigcup_{i=1}^m[e,u_i]$ is strongly constructible of rank $n-1$ for all $1\leq m\leq n-1$.
\end{lemma}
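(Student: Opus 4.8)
The plan is to follow the proof of Lemma~\ref{l1} almost verbatim, carrying along the distinction between balanced and paired cycles and reducing the one genuinely new ingredient to the type~$A$ statement that has already been established. Write $I(n,m)=\bigcup_{i=1}^m[e,u_i]$ and proceed by induction on $m$. The case $m=1$ is immediate, since $[e,u_1]$ is a single interval, hence bounded, pure and shellable by Theorem~\ref{th2}, and therefore strongly constructible. For the inductive step I would view $I(n,m)=I(n,m-1)\cup[e,u_m]$ as a union of two proper ideals of $I(n,m)$, each of rank $n-1$: the first is strongly constructible by the inductive hypothesis and the second is a shellable interval. By the definition of strong constructibility it then suffices to show that $[e,u_m]\cap I(n,m-1)$ is strongly constructible of rank $n-2$.

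The first step is to identify the pairwise intersections. For $1\le i<m$ I claim that
\[
[e,u_m]\cap[e,u_i]=[e,z_i],\qquad z_i=[1,m+1,\dots,n-1]\,\lleft 2,\dots,i,n\rright\,\lleft i+1,\dots,m\rright,
\]
an interval of rank $n-2$. One inclusion is a direct check: the balanced cycle $[1,m+1,\dots,n-1]$ is obtained from the balanced cycle of either $u_m$ or $u_i$ by deleting $i+1,\dots,m$, while the two paired cycles of $z_i$ are noncrossing contiguous arcs of the paired cycle $\lleft 2,\dots,m,n\rright$ of $u_m$ and also lie below the balanced cycle of $u_i$; hence $z_i\mik u_m$ and $z_i\mik u_i$. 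The reverse inclusion, namely that every common lower bound of $u_m$ and $u_i$ lies below $z_i$ (equivalently, that the intersection of these two principal ideals is again principal), is the point that requires real work and where the balanced/paired interaction makes matters less automatic than in type~$A$; I would establish it from the cycle and noncrossing description of $\mik$ in $B_n$ recorded in Section~\ref{relationbn}, exactly as the corresponding identity is used in the proof of Lemma~\ref{l1}.

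Granting this, $[e,u_m]\cap I(n,m-1)=\bigcup_{i=1}^{m-1}[e,z_i]$, and the balanced cycle $b=[1,m+1,\dots,n-1]$ appears in the disjoint cycle decomposition of every $z_i$. Writing $z_i=b\,w_i$ with $w_i=\lleft 2,\dots,i,n\rright\,\lleft i+1,\dots,m\rright$ a product of disjoint paired cycles disjoint from $b$, Remark~\ref{L0'} reduces the task to proving that $\bigcup_{i=1}^{m-1}[e,w_i]$ is strongly constructible of rank $m-2$; the remark then promotes this to rank $(m-2)+(n-m)=n-2$, as required. This is the place where, in contrast with Lemma~\ref{l1}, the recursion does not fold back into the present lemma: since $b$ is the only balanced cycle of each $z_i$, the residual union $\bigcup_{i=1}^{m-1}[e,w_i]$ involves paired cycles only, so the role played by induction on $n$ in Lemma~\ref{l1} is now taken over by Lemma~\ref{l1} itself, applied to the paired part.

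Finally I would dispose of this residual union by passing to type~$A$. Under the standard identification of a paired cycle $\lleft a_1,\dots,a_k\rright$ with the ordinary cycle $(a_1\,\cdots\,a_k)$, the interval $[e,w_i]$ of $\abs(B_n)$ is isomorphic to the interval $[e,(2\,\cdots\,i\,n)(i+1\,\cdots\,m)]$ of $\abs(S_m)$ on the symbol set $\{2,\dots,m,n\}$, so that $\bigcup_{i=1}^{m-1}[e,w_i]$ is precisely the all-ordinary-cycle union that arises, after factoring, inside the proof of Lemma~\ref{l1} and is shown there to be strongly constructible of rank $m-2$. Combining the three displays completes the induction. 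Thus the main obstacle is the explicit determination of the meet $z_i$ together with the proof that $[e,u_m]\cap[e,u_i]$ is a single principal ideal; once this is in hand, Remark~\ref{L0'} and Lemma~\ref{l1} finish the argument with only routine bookkeeping.
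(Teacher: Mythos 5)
Your proposal is correct and follows essentially the same route as the paper, whose own proof of this lemma consists of the remark that it is analogous to Lemma~\ref{l1} with the modifications recorded in Remark~\ref{L0'}: induct on $m$, identify each pairwise intersection $[e,u_m]\cap[e,u_i]$ as the principal ideal $[e,z_i]$ (your formula for $z_i$ agrees with the paper's worked example for $n=6$, $m=3$), factor out the common balanced cycle $[1,m+1,\dots,n-1]$ via Remark~\ref{L0'}, and reduce the residual union of paired-cycle ideals to the type~$A$ union already treated inside the proof of Lemma~\ref{l1}. The one step you flag as needing real work --- that the intersection of the two principal ideals is again principal --- is likewise asserted without proof in the paper's type~$A$ argument, so you are at (or slightly above) the paper's own level of detail.
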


\begin{proof}
The proof is similar to that of Lemma \ref{l1}. 
\end{proof}

\begin{example}
Let $I(n,m)$ be the union in the statement of the Lemma \ref{l1'}. If $n=6$ and $m=3$, then $I(n,m)$ is the order ideal of $\abs(B_n)$ generated by the elements 
$u_1=[1,2,3,4,5]\lleft6\rright,\,u_2=[1,3,4,5]\lleft2,6\rright$ and $u_3=[1,4,5]\lleft2,3,6\rright$. 
We have \[[e,u_3]\cap\left([e,u_1]\cup[e,u_2]\right)=[e,[1,4,5]\lleft2,3\rright\lleft6\rright]\cup[e,[1,4,5]\lleft3\rright\lleft2,6\rright].\]
This intersection is strongly constructible of rank $4$ and $I(n,m)$ is strongly constructible of rank $5$.
\end{example}

\begin{lemma}
\label{l2'}
For $i\in\{1,2,\dots, n-2\}$ consider the element 
\[v_i=[1,n,i+2,\dots,n-1]\lleft2,3,\dots,i+1\rright\in B_n.\] 
The union  
$\bigcup_{i=1}^m[e,v_i]$ is strongly constructible of rank $n-1$ for all $1\leq m\leq n-2$.
\end{lemma}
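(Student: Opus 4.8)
The plan is to mimic the inductive scheme of Lemma \ref{l1} and its hyperoctahedral analogue Lemma \ref{l1'}. Write $I(n,m)=\bigcup_{i=1}^m[e,v_i]$ and proceed by induction on $m$ (with the trivial base $m=1$, where a single interval $[e,v_1]$ is shellable, hence bounded and strongly constructible, by Theorem \ref{th2}). Each $v_i$ has absolute length $(n-i)+(i-1)=n-1$, so all the ideals $[e,v_i]$ have rank $n-1$. For the inductive step it suffices, by the definition of strong constructibility applied to the two proper ideals $I(n,m-1)$ and $[e,v_m]$ of rank $n-1$, to show that
\[
[e,v_m]\cap I(n,m-1)\ =\ \bigcup_{i=1}^{m-1}\bigl([e,v_m]\cap[e,v_i]\bigr)
\]
is strongly constructible of rank $n-2$.

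First I would compute the pairwise intersections. For $i<m$ both $[e,v_i]$ and $[e,v_m]$ are lattices, since the balanced part of each is a single cycle, so that $\mu(v_i)$ is a one-part hook partition and Proposition \ref{charlatt} applies; hence the meets exist and each intersection is a principal ideal $[e,z_i]$. A direct check with the covering relations of Section \ref{relationbn} identifies its generator as
\[
z_i\ =\ [1,n,m+2,\dots,n-1]\,\lleft 2,\dots,i+1\rright\,\lleft i+2,\dots,m+1\rright,
\]
the crucial feature being that the balanced cycle $b:=[1,n,m+2,\dots,n-1]$ occurs in \emph{every} $z_i$ and is disjoint from the paired parts $u_i:=\lleft 2,\dots,i+1\rright\lleft i+2,\dots,m+1\rright$. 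Since each $u_i$ is a product of disjoint paired cycles and $\ell_{\mathcal{T}}(b)=n-m$, Remark \ref{L0'} yields
\[
\bigcup_{i=1}^{m-1}[e,z_i]\ \cong\ [e,b]\times\bigcup_{i=1}^{m-1}[e,u_i],
\]
so that it remains only to prove that $\bigcup_{i=1}^{m-1}[e,u_i]$ is strongly constructible of rank $m-2$.

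This reduced union lives inside the interval below the paired $m$-cycle $\lleft 2,\dots,m+1\rright$, where the $u_i$ are exactly the ``two consecutive arc'' elements, and I would establish its strong constructibility by a completely parallel peeling induction on the number of letters. Removing the ideal $[e,u_{m-1}]$, one checks that for $i<j$ the intersection $[e,u_i]\cap[e,u_j]$ is the principal ideal generated by the corresponding three-arc element, and that for fixed $j$ all of these generators share the common paired cycle $\lleft j+2,\dots,m+1\rright$; factoring it out via Remark \ref{L0'} reduces to the analogous two-arc union on fewer letters, which is covered by the induction hypothesis, and Lemma \ref{strcon}(ii) then assembles the pieces. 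The main obstacle throughout is precisely this intersection bookkeeping: using the type-$B$ noncrossing description of $\abs(B_n)$ one must verify that every meet is a single principal ideal with exactly the stated generator and isolate the common balanced (respectively paired) cycle. Once that is done, Remark \ref{L0'} together with Lemma \ref{strcon} closes the argument mechanically, giving that $[e,v_m]\cap I(n,m-1)$, and hence $I(n,m)$, is strongly constructible of the required rank.
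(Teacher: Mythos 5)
Your argument is correct and matches the approach the paper intends: the paper omits this proof entirely (deferring to the analogues, Lemmas \ref{l1} and \ref{l2}), and your scheme of peeling off one ideal at a time, identifying each pairwise intersection as a principal ideal, factoring out the common balanced (respectively paired) cycle via Remark \ref{L0'}, and reducing to a smaller union is exactly the intended argument. The one point worth flagging is that after factoring out $b=[1,n,m+2,\dots,n-1]$ the residual union $\bigcup_{i=1}^{m-1}\left[e,\lleft 2,\dots,i+1\rright\lleft i+2,\dots,m+1\rright\right]$ is \emph{not} literally an instance of the lemma being proved (nor of Lemma \ref{l1}), so the separate inner induction on arc splittings that you supply is genuinely needed rather than a formality, and your treatment of it is sound.
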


\begin{proof}
The proof is similar to that of Lemma \ref{l2}. 
\end{proof}

\begin{lemma}
\label{l3'}
Let $u_1,u_2,\dots,u_{n-1}\in B_n$ and $v_1,v_2,\dots,v_{n-1}\in B_n$ be defined as in Lemmas \ref{l1'} and \ref{l2'}, respectively. 
If $I_n=\bigcup_{i=1}^{n-1}[e,u_i]$ and $I'_n=\bigcup_{i=1}^{n-2}[e,v_i]$,
then $I_n\cap I'_n$ is strongly constructible of rank $n-2$.
\end{lemma}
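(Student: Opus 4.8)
The plan is to carry over the proof of Lemma~\ref{l3} to type $B$, arguing by induction on $n$ and using the hyperoctahedral tools (Remark~\ref{L0'} and Lemma~\ref{l2'}) in place of their symmetric-group counterparts. The base case (say $n=3$) is checked by hand; there $I_n\cap I'_n$ collapses onto the principal ideal $[e,[1]]$, of rank $1=n-2$. Throughout, the target rank is $n-2$ rather than the $n-3$ of Lemma~\ref{l3}: the extra unit is precisely the contribution of the single balanced cycle $[1,i+1,\dots,n-1]$ that every $u_i$ and $v_j$ carries.

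The first task is to describe the pairwise meets $[e,u_i]\cap[e,v_j]$. As in Lemma~\ref{l3} I expect each such intersection to be a single principal ideal, $[e,z_{ij}]$ when $i<j$ and $[e,w_{ij}]$ when $i\ge j$, where $z_{ij}$ and $w_{ij}$ are suitable type $B$ analogues of the elements of Lemma~\ref{l3}, each retaining a balanced cycle on $\{1\}\cup\{i+1,\dots,n-1\}$ (or the corresponding index set) and having $\ell_{\mathcal{T}}(z_{ij})=n-3$ and $\ell_{\mathcal{T}}(w_{ij})=n-2$. Granting $z_{ij}\mik w_{ij}$, the union $I_n\cap I'_n$ reduces to $\bigcup_i M_i$, where $M_i$ is the order ideal of $\abs(B_n)$ generated by the $w_{ij}$ with $j<i$.

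To finish as in Lemma~\ref{l3}, I would show each $M_i$ is strongly constructible of rank $n-2$: for fixed $i$ every generator $w_{ij}$ carries the same balanced cycle $[1,i+1,\dots,n-1]$, so factoring it out by Remark~\ref{L0'} leaves a family of products of paired cycles, which is strongly constructible by Lemma~\ref{l2'} (equivalently, by the all-paired reduction to the type $A$ situation of Lemma~\ref{l2} afforded by Remark~\ref{anal}). One then assembles $\bigcup_i M_i$ by a second induction, on the number of ideals, via Lemma~\ref{strcon}(ii): the key point is that $M_k\cap\bigl(\bigcup_{i<k}M_i\bigr)$ again has a balanced cycle common to all of its maximal elements, so Remark~\ref{L0'} together with the induction hypothesis on $n$ shows it is strongly constructible of rank $n-2$ or $n-3$, which is exactly what Lemma~\ref{strcon}(ii) requires.

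The main obstacle is the explicit determination of the meets $[e,u_i]\cap[e,v_j]$. In type $A$ this was a one-line identity, but here it does \emph{not} follow by a naive analogy: one must keep track of the signs distinguishing $\lleft a,b\rright$ from $\lleft a,-b\rright$ and of the interaction between the two balanced cycles and the paired parts. For instance, when $u_i$ and $v_j$ carry distinct paired cycles, a reflection $\lleft a,b\rright\mik u_i$ need not satisfy $\lleft a,b\rright\mik v_j$ even though both $a,b$ lie in the support of $v_j$, because $a$ and $b$ may sit in different cycles of $v_j$. The structural fact that keeps each meet a single principal ideal, and avoids the $D_4$-type phenomenon of several maximal elements seen in Theorem~\ref{th6}, is that $u_i$ and $v_j$ each have exactly one balanced cycle and these two balanced cycles share the element $1$; this forces a canonical largest common balanced cycle and localizes the intersection. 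Verifying this, via the covering relations (a)--(f) of $\abs(B_n)$, is where the real work lies; the strongly-constructible bookkeeping above is then routine.
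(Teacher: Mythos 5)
Your proposal follows essentially the same route as the paper's proof: induction on $n$, rewriting $I_n\cap I'_n$ as $\bigcup_{i}M_i$ with $M_i$ generated by the elements $w_{ij}=[1,i+1,\dots,n-1]\lleft 2,\dots,j\rright\lleft j+1,\dots,i,n\rright$, establishing strong constructibility of each $M_i$ via Remark~\ref{L0'} and Lemma~\ref{l2'}, and assembling the union by a second induction exactly as in Lemma~\ref{l3}. The point you flag as the remaining work --- the explicit identification of the meets $[e,u_i]\cap[e,v_j]$ as single principal ideals --- is precisely the step the paper also leaves as an observation, so your account matches both the structure and the level of detail of the published argument.
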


\begin{proof}
We proceed by induction on $n$. For $n=3$ the result is trivial, so assume that $n\geq 4$. 
Let  $M_i$ be the order ideal of $\abs(B_n)$ generated by the elements $w_{ij}$ for $j\in\{2,3,\dots,i-1\}$, 
where \[w_{ij}=[1,i+1,\dots, n-1]\lleft2,3,\dots, j\rright\lleft j+1,\dots, i,n\rright.\] 
We observe that  $I_n\cap I'_n=\bigcup_{i=2}^{n-1} M_i$.
Each of the ideals $M_i$ is strongly constructible of rank $n-2$, by Remark \ref{L0'} and Lemma \ref{l2'}.
As in the proof of Lemma \ref{l3}, it can be shown by induction on $k$ that $\bigcup_{i=2}^k M_i$ is strongly constructible for every $k\leq n-1$.
\end{proof}




\

\subsection*{Acknowledgments}
I am grateful to Christos Athanasiadis for valuable conversations, for his encouragement and for
his careful reading	 and comments on preliminary versions of this paper. I would also like to thank
Christian Krattenthaler and Victor Reiner for helpful discussions and Volkmar Welker for bringing reference \cite{bww}
to my attention. A summary of the results of this paper has appeared in \cite{myrr}.

\end{document}